\numberwithin{equation}{section}
\newtheorem{theorem}{Theorem}[section]
\newtheorem{lemma}[theorem]{Lemma}
\newtheorem{proposition}[theorem]{Proposition}
\newtheorem{corollary}[theorem]{Corollary}
\theoremstyle{definition}
\newtheorem{definition}[theorem]{Definition}
\theoremstyle{remark}
\newtheorem{remark}[theorem]{Remark}
\title{Connectedness of the Gromov boundary of fine curve graphs}
\author{{\sc Yusen Long and Dong Tan}}
\date{}
\definecolor{upsPurp}{RGB}{99,0,60}
\definecolor{upsBlue}{RGB}{0,78,125}
\definecolor{upsCyan}{RGB}{0,148,181}
\DeclareMathOperator{\Isom}{Isom}
\DeclareMathOperator{\diam}{diam}
\DeclareMathOperator{\Id}{Id}
\DeclareMathOperator{\homeo}{Homeo}
\DeclareMathOperator{\NC}{\mathcal{NC}}
\DeclareMathOperator{\NCo}{\mathcal{NC}^\dagger_{o,\pitchfork}}
\DeclareMathOperator{\Co}{\mathcal{C}^\dagger_{o,\pitchfork}}
\DeclareMathOperator{\So}{\mathcal{S}_{o,\pitchfork}}
\titleformat{\subsection}[runin]{\bf}{\thesubsection.}{3pt}{}
\renewcommand\thesubsection{\thesection.\Alph{subsection}}
\begin{document}

\maketitle

\begin{abstract}
The fine curve graph was introduced to study homeomorphism group of surfaces. In this paper we study the topology of the Gromov boundary of this graph for closed surfaces with higher genus. We first prove a bounded geodesic image theorem for the fine curve graph, a consequence of which is the non-compactness of the Gromov boundary. Using this theorem, we are able to show that the Gromov boundary is linearly connected with respect to some visual metric.
\end{abstract}

\noindent{\it Keywords}: fine curve graph, bounded geodesic image theorem, Gromov boundary, connectedness.

\vspace{0.25cm}
\noindent{\it 2020 Mathematics subject classification}: 57K20, 53C23.

\setcounter{tocdepth}{2}
\tableofcontents

\section{Introduction}
Let $S$ be an orientable connected closed surface. In their work on the diffeomorphism groups of surfaces, Bowden, Hensel, and Webb introduced the concept of the \textit{fine curve graph}, denoted by $\mathcal{C}^{\dagger}(S)$ \cite{bowden2022quasi}. This graph differs from the classical curve graph in that its vertices correspond directly to simple closed essential curves on $S$, rather than the isotopy classes of these curves. An edge in $\mathcal{C}^{\dagger}(S)$ connects two vertices if the corresponding curves are disjoint. This construction enables a finer analysis of the geometry and dynamics associated with diffeomorphisms on $S$.

Indeed, the fine curve graph $\mathcal{C}^\dagger(S)$ shares many similarities with the classical curve graph of $S$ and serves as an analogue of the curve graph for the mapping class group: the graph $\mathcal{C}^\dagger(S)$ is also Gromov hyperbolic \cite{bowden2022quasi}, and the group of homeomorphisms of the surface $S$ is isomorphic to the automorphism group of the fine curve graph \cite{long2021automorphisms}.

Yet compared to the classical curve graph, the fine curve graph $\mathcal{C}^\dagger(S)$ is significantly more complex, and the dynamics of the homeomorphism group acting on $\mathcal{C}^{\dagger}(S)$ are much richer than those of the mapping class group on the curve graph. Notably, $\mathcal{C}^\dagger(S)$ is locally uncountable. Although there are points on the Gromov boundary $\partial\mathcal{C}^\dagger(S)$ that can be represented by certain foliations on the surface \cite{bowden2024boundaryfinecurvegraph}, there also exist homeomorphisms of the surface $S$ that induce parabolic isometries on $\mathcal{C}^\dagger(S)$, which in turn leads to boundary points not representable by foliations \cite{bowden2022quasi, bowden2022rotation}, in contrast to the boundary of the curve graph, where all points are represented by minimal measurable foliations \cite{klarreich2022boundary}.

The geometric and topological interpretation of points at infinity in the fine curve graph remains an active area of research. Current efforts aim to understand the stabilisers of these points through (homological) rotation sets \cite{bowden2022rotation, guiheneuf2023parabolic, guiheneuf2023hyperbolic} and foliations \cite{bowden2024boundaryfinecurvegraph}.

In this paper, our focus is on exploring the topological properties of the Gromov boundary of the fine curve graph. This work represents one of the initial steps in this direction.

For the curve graph, Peter Storm posed an interesting question about the (path)-connectedness of its Gromov boundary, as recorded in \cite[Question 10]{kent2008shadows}. This question was answered affirmatively in \cite{gabai2009almost, leininger2009connectivity, leininger2011universal}. Recently, \cite{wright2023spheres} provided a new proof, showing additionally that the Gromov boundary of the curve graph is linearly connected (see Definition \ref{def_lc}).

For any $o \in \mathcal{C}^\dagger(S)$ and sufficiently small $\varepsilon > 0$, a \textit{visual metric} $\rho_{o,b}$ can be defined on the Gromov boundary, where $b \in (1,1+\varepsilon)$, and this metric is compatible with the topology on the Gromov boundary.

The main goal of this paper is to show the following result of linear connectedness with respect to some visual metric:
\begin{theorem}\label{main_thm}
Let $S$ be an orientable connected closed surface with genus $g\geq 3$, then the Gromov boundary $\partial\mathcal{C}^\dagger(S)$ of the fine curve graph is path connected. 

Moreover, if $o$ is a non-separating curve on $S$, then equipped with the visual metric $\rho_{o,b}$, the Gromov boundary is linearly connected.
\end{theorem}

Following the approach in \cite{wright2023spheres}, this result is obtained by analysing a subgraph that is quasi-isometric to the original Gromov hyperbolic graph. This technique originates in \cite{Bestvina1991TheBO}.

In Section \ref{sec-2}, we first generalise the criterion introduced by Wright in \cite{wright2023spheres} to apply to \textit{any} hyperbolic graph without ``\textit{dead ends}'' (see Proposition \ref{prop_wright}). We then apply Proposition \ref{prop_wright} to a carefully selected subgraph.

In our setting, this subgraph is $\NCo(S)$, the subgraph of $\mathcal{C}^\dagger(S)$ consisting of non-separating curves that are transverse to $o$. This subgraph is quasi-isometric to the subgraph $\mathcal{NC}^\dagger(S)$ of non-separating curves, which in turn is quasi-isometric to the fine curve graph $\mathcal{C}^\dagger(S)$. In addition, the spheres in the subgraph $\mathcal{NC}^\dagger(S)$ are connected.

\begin{theorem}\label{thm1}
Let $S$ be an orientable connected closed surface with genus $g \geq 3$. For any integer $r>0$ and any $o\in\NC^\dagger(S)$, the $r$-sphere $\{\alpha\in\NC^\dagger(S)\colon d^\dagger(\alpha,o)=r\}$ is connected.
\end{theorem}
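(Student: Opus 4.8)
The plan is to argue by induction on $r$, following the strategy that Wright uses for the classical non-separating curve complex, but adapted to the fine setting. The base case $r=1$ amounts to showing that the link of a non-separating curve $o$ in $\NC^\dagger(S)$ is connected: given two non-separating curves $\alpha,\beta$ disjoint from $o$, one wants a path in $\NC^\dagger(S)$ from $\alpha$ to $\beta$ staying disjoint from $o$. Cutting along $o$ produces a (possibly disconnected) surface $S_o$ of lower complexity but still of positive genus in at least one component, and one can move $\alpha$ to $\beta$ through non-separating curves of $S$ that avoid $o$ by a standard change-of-coordinates / surgery argument. The key point that must be checked here is that the surgeries can be realized by \emph{actual} curves (not just isotopy classes), which is automatic in the fine curve graph since disjointness is the only constraint and there is plenty of room.

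\medskip

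\textbf{Inductive step.} Assume every $(r-1)$-sphere is connected and let $\alpha,\beta$ be non-separating curves with $d^\dagger(\alpha,o)=d^\dagger(\beta,o)=r\ge 2$. First I would connect $\alpha$ and $\beta$ by \emph{any} path $\gamma_0=\alpha,\gamma_1,\dots,\gamma_n=\beta$ in $\NC^\dagger(S)$; by the triangle inequality each $\gamma_i$ lies in the closed ball of radius $r$ around $o$, but it may dip inward. The standard ``pushing out'' move is: whenever $d^\dagger(\gamma_i,o)\le r-1$, replace $\gamma_i$ by a curve on the sphere of radius $r$ that is still adjacent (in $\NC^\dagger(S)$) to $\gamma_{i-1}$ and $\gamma_{i+1}$. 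Concretely, take a geodesic from $o$ to $\gamma_i$ and let $\delta$ be the vertex at distance $r-1$; then one must produce a non-separating curve $\gamma_i'$ with $d^\dagger(\gamma_i',o)=r$, disjoint from $\gamma_{i-1}$, $\gamma_{i+1}$, and $\delta$. Such a curve exists because $\delta$ (together with the finitely many disjointness constraints) cuts the surface into pieces at least one of which has positive genus, and in that piece there is a non-separating curve of $S$ whose distance to $o$ is forced to be exactly $r$ (at most $r$ since it meets $\delta$, at least $r$ since it is disjoint from $\delta$ which has distance $r-1$, so any shorter path would contradict $d^\dagger(\delta,o)=r-1$ — here one uses hyperbolicity/geodesic estimates as in the classical case, or more simply the definition of distance).

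\medskip

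\textbf{Replacing the two endpoints and the induced lower-dimensional problem.} After pushing out, every vertex of the path is on the $r$-sphere \emph{except} possibly that consecutive vertices $\gamma_i',\gamma_{i+1}'$ need not be \emph{adjacent} — they are each adjacent to the old $\gamma_i$ or to a common curve $\delta$ at level $r-1$. So the real work is: given two curves $a,b$ on the $r$-sphere that are both disjoint from a common curve $\delta$ on the $(r-1)$-sphere, connect $a$ to $b$ within the $r$-sphere. This is where the induction is fed back in: the curves on the $r$-sphere disjoint from a fixed $\delta$ on the $(r-1)$-sphere form (a subset closely related to) the link of $\delta$ intersected with the $r$-sphere, and one reduces to connectivity of the $(r-1)$-sphere in the cut surface $S_\delta$, or equivalently to the $r=1$ analysis relative to $\delta$. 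Iterating along a geodesic from $o$ to $\delta$ and using the inductive hypothesis for spheres of radius $r-1$ closes the loop. I would also need the small lemma that the ``disjoint from $\delta$'' graph on the $r$-sphere is itself connected, which again is a change-of-coordinates statement on the cut surface.

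\medskip

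\textbf{Main obstacle.} The delicate point is that in the fine curve graph distances are \emph{not} controlled by intersection numbers — two curves can be at large distance while intersecting only twice, and a curve can fail to be pushed out to distance exactly $r$ if every candidate curve in the available subsurface happens to have distance $<r$ from $o$. So the heart of the argument is a \emph{distance lower bound}: one must show that a curve disjoint from a fixed curve $\delta$ with $d^\dagger(\delta,o)=r-1$, and chosen ``generically'' in a positive-genus piece of $S\setminus\delta$, really does have distance $\ge r$ from $o$ (hence exactly $r$). Establishing this cleanly — presumably by a careful use of Gromov hyperbolicity of $\mathcal{C}^\dagger(S)$ together with the fact that $d^\dagger(\cdot,o)$ restricted to curves disjoint from $\delta$ cannot drop by more than $1$ — is the step I expect to require the most care, and it is exactly the point where the argument genuinely differs from the classical one and where Wright's techniques (as invoked in the excerpt) have to be imported.
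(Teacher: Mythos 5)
There is a genuine gap, and it sits exactly where you yourself flag the ``main obstacle.'' Your inductive pushing-out step requires producing a curve $\gamma_i'$ disjoint from a vertex $\delta$ with $d^\dagger(\delta,o)=r-1$ and satisfying $d^\dagger(\gamma_i',o)=r$, but the lower bound you sketch is not valid: disjointness from $\delta$ only gives $d^\dagger(\gamma_i',o)\leq r$, and a value of $r-1$ or $r-2$ would in no way contradict $d^\dagger(\delta,o)=r-1$. In the fine curve graph there is no mechanism in your outline that certifies a lower bound on $d^\dagger(\cdot,o)$ for a curve chosen ``generically'' in a positive-genus piece of $S\setminus\delta$ --- distance is not controlled by intersection data or by the position of a single candidate curve --- so the heart of the induction (Wright's Proposition 5.4, which you would be re-proving from scratch in the fine setting) is left unestablished. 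The subsequent reduction ``to connectivity of the $(r-1)$-sphere in the cut surface $S_\delta$'' is likewise not justified: spheres about $o$ in $\NC^\dagger(S)$ do not correspond to spheres in any graph attached to the cut surface.

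The paper closes precisely this hole by a different route: it never redoes Wright's induction inside $\NC^\dagger(S)$. Instead it uses the Bowden--Hensel--Webb approximation (Proposition \ref{prop_NC_approx}): for finitely many curves that are transverse and in minimal position in $S-P$ for a suitable finite marked set $P$, the fine distance equals the distance in the classical graph $\NC(S-P)$, and choosing a hyperbolic metric on $S-P$ (Proposition \ref{prop_metric}) gives an isometric embedding $\NC(S-P)\hookrightarrow\NC^\dagger(S)$ by geodesic representatives. This transports the problem to sphere connectivity in the classical non-separating curve graph of a high-complexity punctured surface, where Wright's Proposition 5.4 applies verbatim; the only genuinely ``fine'' work left is perturbing non-transverse curves to transverse ones without changing their distance to $o$ (Lemmas \ref{lem_pert} and \ref{lem_trans_all}), after which the path is pulled back and concatenated with the perturbation edges. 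Without an analogue of Proposition \ref{prop_NC_approx} --- or some other certified distance lower bound in $\NC^\dagger(S)$ --- your induction does not close.
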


However, the connectedness of spheres in the curve graph and in its Gromov boundary does not directly help in proving Theorem \ref{main_thm}. Since the fine curve graph is approximated by the \textit{surviving curve graph} on punctured subsurfaces $S-P$ (see Proposition \ref{prop_NC_approx}), the paths connecting two points on the sphere obtained by the method in \cite{wright2023spheres} may involve pant curves on the punctured subsurfaces, which are not essential curves on $S$.

To address the complications introduced by pant curves, we establish a bounded geodesic image theorem for the fine curve graph in Section \ref{sec-4}, following the approach of \cite{webb2013shortproofboundedgeodesic}.
\begin{theorem}[Bounded image theorem]\label{thm_bounded_geodesic_image}
    Given a surface $S$ with $g(S)\geq 2$, there exists a constant $M$ such that whenever $Y$ is a subsurface and $g=\left(\gamma_i\right)$ is a geodesic in $\mathcal{C}^{\dagger}(S)$ such that $\gamma_i$ intersects $Y$ essentially for all $i$, then $\diam_{d_Y^{\dagger}}(\pi_{Y}(g)) \leq M$.
\end{theorem}
Similarly, we also have a version for the non-surviving curve graph (see Theorem \ref{thm_surviving_bounded_geodesic_image}), which is the theorem used to prove the connectedness properties. The primary challenges in the proof arise from the non-transversality between topological curves and the surgeries on the surviving curve graph, both of which are carefully treated in Sections \ref{sec-3} and \ref{sec-4}.

With the bounded geodesic image theorem, we can also conclude that the Gromov boundary of the fine curve graph is not compact, similar to the case of the curve graph.
\begin{theorem}\label{thm_cpt}
Let $S$ be an orientable connected closed surface with genus $g(S)\geq 2$. Then the Gromov boundary $\partial \mathcal{C}^\dagger(S)$ is not compact. 
\end{theorem}

To verify the conditions in Proposition \ref{prop_wright}, the key step is to use the bounded geodesic image theorem to construct paths connecting points on thickened spheres in the subgraph $\NCo(S)$, as desired. These constructions are addressed in Section \ref{sec-5}.

At the end of this paper, we also present several interesting observations concerning the group action of $\homeo(S)$ on the Gromov bordification of $\mathcal{C}^\dagger(S)$.

\subsection*{Acknowledgement}
The first author wishes to thank the hospitality of Guangxi Center for Mathematical Research. The first author is supported by CSC-202108170018 and Tremplin--ERC Starting Grant MAGIC (reference ANR-23-TERC-0007). The second author is partially supported by National Natural Science Foundation of China grant No.12001122, 12271533, 12361014. The authors are grateful to Mladen Bestvina, Bruno Duchesne, Federica Fanoni, Pierre-Antoine Guih\'{e}neuf, Sebastian Hensel, Yongsheng Jia, Roberta Shapiro and Alex Wright for their valuable comments.
\section{Gromov hyperbolic space and bordification}\label{sec-2}

\subsection{Definition.} The notion of $\delta$-hyperbolic space was originally introduced in \cite{gromov1987hyperbolic}. Classical references are \cite{bridson2013metric} and \cite{das2017geometry}, while the latter treats more general cases. Here we cover a very limited part of this topic to clarify some folklore results and introduce the tools that we will later need. For more details, we refer the reader, if interested, to the classical references above.

In the sequel, we will write $O(\delta)$ for an additive error that is at most a multiple of $\delta>0$, {\it i.e.} $f=g+O(\delta)$ if and only if there exists $K>0$ such that $\|f-g\|_\infty <K\delta$.

Let $(X,d_X)$ be a metric space. Let $o\in X$ be an arbitrary base point. Define the function $\langle\cdot,\cdot\rangle_o\colon X\times X\to [0,\infty)$ by
\begin{align}\label{eq2.1}
\langle x,y\rangle_o\coloneqq \frac{1}{2}\big(d_X(x,o)+d_X(y,o)-d_X(x,y)\big)
\end{align}
for every $x,y\in X$. The quantity $\langle x,y\rangle_o$ in (\ref{eq2.1}) is called the {\it Gromov product of $x$ and $y$ based at $o$}, or simply the \textit{Gromov product of $x$ and $y$}.

\begin{definition}[Gromov hyperbolic space]\label{def2.1.3}
A metric space $(X,d_X)$ is {\it Gromov hyperbolic}, or {\it $\delta$-hyperbolic}, if there exists some $\delta\geq 0$ such that
\begin{align}\label{eq2.2}
    \langle x,y\rangle_o\geq \min\left\{\langle x,z\rangle_o,\langle z,y\rangle_o\right\}-\delta
\end{align}
for all $x,y,z,o\in X$. 
\end{definition}

When the metric space is geodesic, \textit{i.e.} a space where every pair of points can be connected by a path isometric to a closed real interval, $\delta$-hyperbolicity is equivalent to the property that every geodesic triangle is $\delta'$-slim, for some uniform $\delta'\geq 0$. This condition is attributed to Rips by Gromov in \cite{gromov1987hyperbolic} and is thus also often called {\it Rips condition}, see \cite[\S 4.3]{das2017geometry} for the exact definition.

A sequence $(x_n)_{n\geq 0}$ in $\delta$-hyperbolic space is {\it Cauchy-Gromov} if $\langle x_n,x_m\rangle_o\to \infty$ as $n,m\to \infty$. We remark that being Cauchy-Gromov of a sequence does not depend on the choice of the base point $o\in X$. A {\it Cauchy-Gromov sequence} $(x_n)_{n\geq 0}$ is {\it equivalent} to another Cauchy-Gromov sequence $(y_m)_{m\geq 0}$, if $\langle x_n,y_m\rangle_o\to \infty$ as $n,m\to \infty$. 

\begin{definition}[{\it Gromov boundary}]
Let $(X,d_X)$ be a Gromov hyperbolic space. Then we define its {\it Gromov boundary} by the equivalence classes of Cauchy-Gromov sequences in $X$ and we denote it by $\partial X$. We say that a sequence $(x_n)_{n\geq 0}$ converges to a boundary point $\xi\in\partial X$ if it is a representative of $\xi$. The disjoint union 
$$
\mathrm{bord}X\coloneqq X\cup \partial X
$$ 
is called the {\it Gromov bordification} of $X$.
\end{definition}

The Gromov product can be extended to the boundary in the following way. For all $x\in X$ and $\xi\in\partial X$, we define
$$\langle x,\xi\rangle_o\coloneqq\inf \left\{\liminf_{n\to \infty}\langle x,x_n\rangle_o\colon(x_n)\in\xi\right\},$$
and also by setting for all $\eta,\xi\in\partial X$
$$\langle \eta,\xi\rangle_o\coloneqq\inf \left\{\liminf_{n,m\to \infty}\langle y_m,x_n\rangle_o\colon(x_n)\in \xi\mathrm{\ and\ }(y_m)\in \eta\right\}.$$
In particular, note that $\langle \xi,\xi\rangle_o=\infty$ for all $\xi \in\partial X$. Moreover, we have the estimation
\begin{align}\label{product_est}
\left|\lim_{n,m\to \infty}\langle x_n, y_m\rangle_o-\langle \xi,\eta\rangle_o\right|=O(\delta)    
\end{align}
for any Cauchy-Gromov sequences $(x_n)\in \xi$ and $(y_n)\in \eta$ and any $o\in X$ \cite[Lemma 3.4.7]{das2017geometry}.

With the Gromov product, the Gromov boundary can be endowed with the uniform structure generated by the  {\it entourages} in the form of
$$\left\{(\xi,\eta)\in \partial X \times \partial X\colon \langle \xi,\eta\rangle_o> R>0\right\},$$
which further makes it into a topological space. Another way to understand the open sets in $\partial X$ is to consider the basis consisting of {\it shadows}, {\it i.e.} subsets given by
\begin{align}\label{eq_shadow}
    \left\{\xi\in\partial X\colon \langle x,\xi\rangle_o>R>0\right\}.
\end{align}
This fact can be easily verified by (\ref{eq2.2}) and (\ref{product_est}) under the setting of Gromov hyperbolicity.

Recall that a map $f\colon (X,d_X)\to (Y,d_Y)$ between two metric spaces is a {\it $(\lambda,k)$-quasi-isometric embedding} if there exist $\lambda\geq 1$ and $k>0$ such that
$$\frac{1}{\lambda}d_X(x,y)-k\leq d_Y\big(f(x),f(y)\big)\leq \lambda d_X(x,y)+k$$
for every $x,y\in X$. If in addition, the map $f$ is {\it essentially surjective}, {\it i.e.} there exists $C>0$ such that $d_Y\big(y,f(X)\big)<C<\infty$ for all $y\in Y$, then $f$ is a quasi-isometry and $X$ and $Y$ are said to be quasi-isometric. We remark that being quasi-isometric is an equivalence relation. A {\it quasi-geodesic} in a metric space $X$ is a quasi-isometric embedding of a real interval into $X$.

The following result is pivotal in large scale geometry {\it {\`a} la Gromov} and is well-known in proper cases. But we remark that it actually holds for non-proper spaces \cite{hasegawa2022gromov,vasala2005gromov}. For convenience, we state it here below. There are several different proofs from different approaches and the one we present here might already be known to \cite{caprace2015amenable}:
\begin{proposition}\label{prop_folklore}
Let $(X,d_X)$ be a $\delta$-hyperbolic geodesic space and $o\in X$ be an arbitrary base point. Then $\partial X$ is in bijection with the equivalence classes of quasi-geodesic rays issued from $o$ and two quasi-geodesic rays are equivalent if they have bounded Hausdorff distance. Moreover, a quasi-isometric embedding $f\colon X\to Y$ induces a topological embedding $f_\partial \colon \partial X\to \partial Y$ and $f_\partial$ is a homeomorphism if $f$ is essentially surjective.
\end{proposition}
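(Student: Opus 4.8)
The plan is to establish the two assertions in turn, the recurring theme being that since $X$ is not assumed proper we cannot invoke Arzel\`a--Ascoli to produce honest geodesic rays and must instead manufacture \emph{quasi}-geodesic rays by hand from explicit fellow-travelling estimates, and that the Gromov product is not additively preserved under quasi-isometric embeddings, so the functoriality must also be routed through quasi-geodesic rays rather than through a naive estimate on $\langle\cdot,\cdot\rangle_o$.

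\textbf{Boundary points give quasi-geodesic rays.} First I would record the standard consequences of $\delta$-hyperbolicity in a geodesic space, all valid without properness: (i) $\langle x,y\rangle_o=d_X(o,[x,y])+O(\delta)$ for any geodesic $[x,y]$; (ii) two geodesics $[o,x],[o,y]$ are $O(\delta)$-Hausdorff close along their initial subsegments of length $\le\langle x,y\rangle_o$; (iii) the stability of quasi-geodesics (Morse lemma), whose usual proof uses only that the space is geodesic and $\delta$-hyperbolic. Given $\xi\in\partial X$ with a Cauchy-Gromov representative $(x_n)$, pass to a subsequence with $\langle x_{n_i},x_{n_j}\rangle_o\ge i$ for all $j\ge i$ (in particular $d_X(o,x_{n_i})\ge i$). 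By (ii) the geodesics $[o,x_{n_j}]$ pairwise $O(\delta)$-fellow-travel on longer and longer initial segments, so for each integer $m\ge 0$ the point at arclength $m$ on $[o,x_{n_j}]$ is, for all large $j$, well defined up to an $O(\delta)$ ambiguity; fix a choice $p_m$. Using (ii) again one gets $d_X(p_m,p_{m'})=|m-m'|+O(\delta)$ and $d_X(o,p_m)=m+O(\delta)$, so joining consecutive $p_m$ by geodesic segments yields a quasi-geodesic ray $r$ issued from $o$ (here I use the routine fact that a concatenation of uniformly bounded segments along a quasi-geodesically spaced sequence is a quasi-geodesic). Finally $\langle p_m,x_{n_j}\rangle_o=m+O(\delta)$ for $j$ large by (i)--(ii), whence $\langle p_m,\xi\rangle_o=m+O(\delta)\to\infty$, i.e.\ $r$ converges to $\xi$.

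\textbf{Quasi-geodesic rays give boundary points, and the equivalence relation.} Conversely, for a $(\lambda,k)$-quasi-geodesic ray $r$ from $o$, applying the Morse lemma to the finite quasi-geodesic $r|_{[0,T]}$ and to the geodesic $[o,r(T)]$ gives $\langle r(s),r(T)\rangle_o=d_X(o,r(s))+O_{\lambda,k,\delta}(1)$ for all $T\ge s$; letting $T\to\infty$ shows $(r(m))_m$ is Cauchy-Gromov, hence defines $\xi\in\partial X$, and $\langle r(s),\xi\rangle_o=d_X(o,r(s))+O_{\lambda,k,\delta}(1)$ by the extension estimate (\ref{product_est}). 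To see the two assignments are mutually inverse it suffices to check that two quasi-geodesic rays $r,r'$ from $o$ converge to the same $\xi$ if and only if they lie at bounded Hausdorff distance. The ``if'' direction is immediate from the definition of $\langle\,\cdot\,,\xi\rangle_o$. For ``only if'', expand $\langle r(s),r'(t)\rangle_o$ using the radial estimates just obtained: for $s$ large one gets $d_X(r(s),r'(t))=O(1)$ once $t$ is chosen with $d_X(o,r'(t))=d_X(o,r(s))+O(1)$, which is possible since $t\mapsto d_X(o,r'(t))$ is coarsely onto a half-line; by symmetry, and since the initial portions are bounded, $r$ and $r'$ are at bounded Hausdorff distance. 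This yields the claimed bijection, independent of the choice of $o$ because the boundary and its topology (generated by the entourages $\{\langle\xi,\eta\rangle_o>R\}$, equivalently by the shadows (\ref{eq_shadow})) are.

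\textbf{Functoriality.} Let $f\colon X\to Y$ be a $(\lambda,k)$-quasi-isometric embedding. If $r$ is a quasi-geodesic ray in $X$ then $f\circ r$ is a quasi-geodesic ray in $Y$ (composition of quasi-isometric embeddings), hence converges to a point of $\partial Y$ by the previous paragraph; if $r,r'$ are at bounded Hausdorff distance then so are $f\circ r,f\circ r'$, and conversely — using the \emph{lower} quasi-isometry bound $\tfrac1\lambda d_X-k\le d_Y(f(\cdot),f(\cdot))$ — bounded Hausdorff distance of $f\circ r,f\circ r'$ forces bounded Hausdorff distance of $r,r'$. Through the bijection this defines an injective map $f_\partial\colon\partial X\to\partial Y$, $\xi\mapsto[f\circ r]$. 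Since by the radial estimate $\langle\xi,\eta\rangle_o$ is comparable to the length along which quasi-geodesic rays to $\xi$ and $\eta$ stay close near $o$, and both quasi-isometry bounds change that length only in a controlled, two-sided way under $f$, the map $f_\partial$ carries the entourages $\{\langle\cdot,\cdot\rangle_o>R\}$ to a neighbourhood basis of $f_\partial(\partial X)$ and back, i.e.\ it is a topological embedding. When $f$ is essentially surjective it admits a coarse inverse $g\colon Y\to X$ with $g\circ f$ and $f\circ g$ at bounded distance from the identities; as a self-map at bounded distance from the identity sends each quasi-geodesic ray to one at bounded Hausdorff distance from it, $g_\partial$ is a two-sided inverse to $f_\partial$, so $f_\partial$ is a homeomorphism. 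The one genuine subtlety throughout is the lack of properness: there need be no geodesic ray representing a given $\xi$, so every step must use hyperbolic-geometry inputs (above all the Morse lemma) only in forms valid without local compactness, and the functoriality cannot be reduced to an additive comparison of Gromov products.
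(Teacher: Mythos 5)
Your argument is correct in substance, but it follows a genuinely different route from the paper. The paper's sketch enlarges $X$: it takes a non-principal ultrafilter and embeds $X$ isometrically into the ultraproduct $u(X)$ of bounded sequences, checks that $u(X)$ is geodesic, $\delta$-hyperbolic and ultracomplete, and observes that geodesic segments with endpoints running along a (quasi-)geodesic ray converge there to an honest geodesic ray — this limit procedure is the substitute for Arzel\`a--Ascoli, after which the classical proper-case arguments (including the Morse lemma for infinite quasi-geodesics) are run verbatim in $u(X)$. You instead never leave $X$: you build a $(1,O(\delta))$-quasi-geodesic ray to each boundary point by hand from fellow-travelling of the finite geodesics $[o,x_{n_j}]$, use only the finite Morse lemma, and route the functoriality through pushing rays forward and comparing Gromov products with divergence times. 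Your approach is more elementary and self-contained (no ultrafilters, explicit constants, and it directly supplies the continuous $(1,N\delta)$-quasi-geodesics from $o$ that the paper invokes later in Lemma \ref{lem_vm_est_qg}); the paper's approach buys more structure, namely the ultracompleteness of $u(X)$, which it exploits in the subsequent remark. One step you state a bit too quickly is the ``only if'' half of the Hausdorff-distance equivalence: from $[r]=[r']$ alone you only know $\langle r(s),r'(t)\rangle_o\to\infty$ jointly in $s,t$, so to get $d_X(r(s),r'(t))=O(1)$ at matching radii you should insert an auxiliary large parameter — bound $\langle r(s),r'(t)\rangle_o\geq\min\bigl(\langle r(s),r(u)\rangle_o,\langle r(u),r'(t)\rangle_o\bigr)-\delta$ with $u,t$ large, use your radial estimate for the first term, and then project back onto $r'$ via the finite Morse lemma to find the point $r'(t)$ at the right radius; this is standard and fits your framework, but as written the ``expand the Gromov product'' phrase hides exactly this point.
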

\begin{proof}[Sketch of Proof]
First, fix a base point $o\in X$. We remark that for every boundary point $\eta\in\partial X$, we can connect $o$ to $\xi$ by a continuous $(1,N\delta)$-quasi-geodesic for some large $N>0$. Let $\omega$ be a non-principal ultrafilter over $\mathbb{N}$. Take the ultraproduct $u(X)$ of $X$ along $\omega$ in the following way: $u(X)$ is the space consisting of all equivalent {\it bounded} sequences in $X$ and two sequences $(x_n)$ and $(y_n)$ are equivalent if the $\omega$-limit of $d_X(x_n, y_n)$ tends to $0$. The space $u(X)$ is equipped with the limit metric $d_\omega([x],[y])\coloneqq \lim_{n\to \omega}d_X (x_n,y_n)$. It is not hard to see that with this metric, $u(X)$ is geodesic and $\delta$-hyperbolic, where $X\hookrightarrow u(X)$ is embedded isometrically. Moreover, one can verify that some geodesic segments with endpoints on a quasi-geodesic ray or a bi-infinite quasi-geodesic line converge to a geodesic ray or a geodesic line. This fact plays the r{\^o}le of Arzel{\`a}-Ascoli theorem as in the classical proper cases. Now we can conclude the results, including {\it Morse Lemma} for infinite quasi-geodesics, as in the classical cases.
\end{proof}

\begin{remark}
In \cite[\S 7]{gromov1987hyperbolic}, a Gromov hyperbolic space $X$ is {\it ultracomplete} if for any two points $x,y\in\mathrm{bord} X$, there exists a geodesic connecting $x$ to $y$. The proof above actually shows that $u(X)$ is ultracomplete. If a Gromov hyperbolic geodesic space is proper, then it is ultracomplete. But the converse is not true: the {\it curve graph} of a compact hyperbolic surface is ultracomplete but not proper \cite[Lemma 5.14]{minsky2010classification}.
\end{remark}

\subsection{Visual metric and boundary connectedness.} Another powerful tool to study the Gromov boundary is the following notion of visual metric that is carefully treated in \cite[\S 3.6]{das2017geometry} and \cite{vasala2005gromov} under general settings:
\begin{theorem}[V{\"a}s{\"a}il{\"a}]
Let $(X,d_X)$ be a Gromov hyperbolic space. Then $\partial X$ is completely metrizable. For any based point $o\in X$, the topology is compatible to the {\it visual metric} $\rho_{o,b}$ from $o\in X$ with parameter $b\in(1,1+\varepsilon)$ for some small enough $\varepsilon>0$. Moreover, we have the following estimate
\begin{align}\label{est_visual_metric}
b^{-\langle \xi,\eta\rangle_o}/4\leq \rho_{o,b}(\xi,\eta)\leq b^{-\langle \xi,\eta\rangle_o}
\end{align}
for every $\xi,\eta\in \partial X$.
\end{theorem}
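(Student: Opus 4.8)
The plan is to follow the classical route of Gromov, spelled out in detail in \cite[\S 3.6]{das2017geometry} and \cite{vasala2005gromov}; I only indicate the skeleton. Fix the base point $o$ and abbreviate $\langle\cdot,\cdot\rangle\coloneqq\langle\cdot,\cdot\rangle_o$. The first point is that, once the Gromov product has been extended to $\partial X$ as above, the four-point inequality (\ref{eq2.2}) persists on the boundary up to an additive $O(\delta)$: for all $\xi,\eta,\zeta\in\partial X$,
$$\langle\xi,\eta\rangle\geq\min\{\langle\xi,\zeta\rangle,\langle\zeta,\eta\rangle\}-O(\delta),$$
where (\ref{product_est}) is exactly what is needed to push (\ref{eq2.2}) through the $\liminf$'s. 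Exponentiating, the symmetric function $\rho_0(\xi,\eta)\coloneqq b^{-\langle\xi,\eta\rangle}$ (with $\rho_0(\xi,\xi)=0$ because $\langle\xi,\xi\rangle=\infty$, and $\rho_0(\xi,\eta)>0$ for $\xi\neq\eta$ since then $\langle\xi,\eta\rangle<\infty$) satisfies the \emph{quasi-ultrametric} inequality $\rho_0(\xi,\eta)\leq C\max\{\rho_0(\xi,\zeta),\rho_0(\zeta,\eta)\}$ with $C=b^{O(\delta)}$. Choosing $\varepsilon>0$ small enough (depending on $\delta$) that $C\leq 2$ for every $b\in(1,1+\varepsilon)$ is precisely where the hypothesis on $b$ enters.

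Next I would upgrade $\rho_0$ to a genuine metric by the Frink chain construction,
$$\rho_{o,b}(\xi,\eta)\coloneqq\inf\left\{\sum_{i=1}^{n}\rho_0(\xi_{i-1},\xi_i)\;:\;n\geq 1,\ \xi=\xi_0,\xi_1,\dots,\xi_n=\eta\right\}.$$
Symmetry, non-negativity and the triangle inequality are automatic; the substance is the two-sided comparison (\ref{est_visual_metric}). The upper bound $\rho_{o,b}\leq\rho_0$ comes from the length-one chain. The lower bound $\rho_0\leq 4\,\rho_{o,b}$ is proved by induction on the chain length $n$: split a chain at the index where the accumulated $\rho_0$-mass first exceeds half the total, bound each half by induction, and use the quasi-ultrametric inequality with $C\leq 2$ to recombine. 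This is the one genuinely delicate step, and it is also the reason the statement fails for $b$ far from $1$: without $C\leq 2$ the infimum can degenerate to $0$, and $\rho_{o,b}$ ceases to be comparable to $\rho_0$. Separation of points then follows from (\ref{est_visual_metric}) together with $\langle\xi,\eta\rangle<\infty$ for $\xi\neq\eta$.

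Finally, two things remain: matching the topology and proving completeness. Since (\ref{est_visual_metric}) shows that the $\rho_{o,b}$-balls and the shadows (\ref{eq_shadow}) — equivalently the entourages $\{\langle\xi,\eta\rangle>R\}$ — are cofinal in one another, $\rho_{o,b}$ metrizes the Gromov topology; in particular the resulting topology is independent of $o$ and $b$, although the metric itself does depend on these choices. For completeness, take a $\rho_{o,b}$-Cauchy sequence $(\xi_k)$ in $\partial X$; choosing for each $k$ a Cauchy-Gromov representative $(x^{(k)}_n)_n\in\xi_k$ and, using (\ref{product_est}), a diagonal sequence $y_k\coloneqq x^{(k)}_{n_k}$ with $n_k$ growing fast enough, one checks that $(y_k)$ is Cauchy-Gromov and that its class $\xi\in\partial X$ satisfies $\rho_{o,b}(\xi,\xi_k)\to 0$; hence $(\partial X,\rho_{o,b})$ is complete. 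The main obstacle in the whole argument is the inductive lower bound in the Frink construction together with the attendant calibration of $\varepsilon$; everything else is bookkeeping with the $O(\delta)$ estimates already recorded above, and if one prefers a more geometric argument (via continuous $(1,O(\delta))$-quasigeodesics between boundary points) it is available through Proposition \ref{prop_folklore}.
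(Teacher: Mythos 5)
The paper itself offers no proof of this theorem: it is imported verbatim from V\"ais\"al\"a \cite{vasala2005gromov} and \cite[\S 3.6]{das2017geometry}, so the only comparison available is with those sources, and your outline is indeed the standard argument found there --- push the hyperbolic inequality (\ref{eq2.2}) to the boundary using (\ref{product_est}), note that $\rho_0(\xi,\eta)=b^{-\langle\xi,\eta\rangle_o}$ is a quasi-ultrametric with constant $C=b^{O(\delta)}$, apply a Frink-type chain construction, and then check that the chained metric is comparable to $\rho_0$, metrizes the shadow topology (\ref{eq_shadow}), and is complete. The completeness and topology-matching parts of your sketch are routine and fine.

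The problem is in the one step you yourself flag as delicate, and it is a real gap as written. Splitting a chain at the first index where the accumulated $\rho_0$-mass exceeds half the total produces \emph{three} pieces (left sub-chain, one straddling edge, right sub-chain), so recombining costs two applications of the quasi-ultrametric inequality. With the inductive hypothesis $\rho_0\le 4\,(\text{chain sum})$ and sub-chain masses at most $S/2$, the recombination gives $\rho_0(\xi_0,\xi_n)\le C\max\bigl\{2S,\,C\max\{S,2S\}\bigr\}=2C^2S$, which for your threshold $C\le 2$ is $8S$, not the required $4S$; no choice of inductive constant closes the loop at $C=2$ by this route. This is exactly the well-known subtlety in Frink's metrization lemma: the naive half-splitting induction works only for $C\le\sqrt{2}$ (where it in fact yields the stronger bound $\rho_0\le 2\rho_{o,b}$, hence (\ref{est_visual_metric})), and this is why, e.g., \cite{bridson2013metric} imposes a $\sqrt{2}$-type smallness condition; the case $C\le 2$ is true but needs a genuinely more careful induction (Schroeder's refinement of Frink), and for $C>2$ the chain infimum can indeed collapse to $0$. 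The slip does not threaten the theorem, since the statement only asks for \emph{some} small $\varepsilon$: either shrink $\varepsilon$ so that $b^{K\delta}\le\sqrt{2}$ for the implied constant $K$, or quote the sharpened $C\le2$ lemma instead of proving it by your splitting argument. But as stated, ``$C\le 2$ suffices'' combined with the split-and-recombine induction does not prove the lower bound in (\ref{est_visual_metric}).
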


Let $X$ be a $\delta$-hyperbolic geodesic space and let $\gamma\subset X$ be a geodesic. We define for a point $x\in X$ its {\it nearest points projection} $\pi_\gamma(x)$ by the points $y\in\gamma$ satisfying $d_X(x,y)=d_X(x,\gamma)$. We remark that $\pi_\gamma(x)$ is not empty in a geodesic space: it consists of the minima of the continuous proper function $d_X(x,\cdot)\colon \gamma \to [0,\infty)$.

The first result about this notion is referred to as the {\it reverse triangle inequality}:
\begin{proposition}[\cite{maher2018random}, Proposition 2.2]\label{prop_proj_geo}
Let $X$ be $\delta$-hyperbolic and $\gamma$ be a geodesic. For any $x\in X$ and any $p\in\pi_\gamma(x)$, we have
\begin{align}\label{eq_2}
    d(x,y)=d(x,p)+d(p,y)+O(\delta)
\end{align}
for all $y\in \gamma$.
\end{proposition}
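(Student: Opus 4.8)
The plan is to establish the nontrivial half, namely $d(x,y)\ge d(x,p)+d(p,y)-O(\delta)$, since the reverse inequality is immediate from the triangle inequality. Equivalently, writing $t\coloneqq\langle x,y\rangle_p=\tfrac12\big(d(x,p)+d(p,y)-d(x,y)\big)$ for the Gromov product based at the projection point, I must show that $t$ is at most a fixed multiple of $\delta$; then $d(x,y)=d(x,p)+d(p,y)-2t=d(x,p)+d(p,y)+O(\delta)$ as desired.

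First I would pin down a convenient ``witness'' point on $\gamma$. From the triangle inequality $d(x,y)\ge d(x,p)-d(p,y)$ one gets $t\le d(p,y)$, and $t\ge 0$ always, so I may take $q$ to be the point on the subsegment of $\gamma$ joining $p$ to $y$ with $d(p,q)=t$. Since $q$ lies on this geodesic subsegment, $d(p,y)=d(p,q)+d(q,y)$, and a direct computation from the definition of the Gromov product gives $\langle y,q\rangle_p=t$.

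The key step is then a single application of the four-point hyperbolicity inequality (\ref{eq2.2}) with base point $p$ and the triple $(x,y,q)$: it yields $\langle x,q\rangle_p\ge\min\{\langle x,y\rangle_p,\langle y,q\rangle_p\}-\delta=t-\delta$. Expanding $\langle x,q\rangle_p=\tfrac12\big(d(x,p)+d(p,q)-d(x,q)\big)=\tfrac12\big(d(x,p)+t-d(x,q)\big)$ and rearranging gives $d(x,q)\le d(x,p)-t+2\delta$. On the other hand $q\in\gamma$ and $p\in\pi_\gamma(x)$ realises $d(x,\gamma)$, so $d(x,q)\ge d(x,p)$; comparing the two bounds forces $t\le 2\delta$. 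This is exactly the estimate needed, and substituting back into the expression for $d(x,y)$ finishes the argument.

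I do not expect a genuine obstacle: this is the standard ``reverse triangle inequality / coarsely Lipschitz nearest-point projection'' argument. The only points demanding a little care are the a priori bound $t\le d(p,y)$ guaranteeing that the witness $q$ genuinely lies on the chosen subsegment of $\gamma$, and the bookkeeping of the additive constant (it comes out as $2\delta$ with the four-point convention of Definition \ref{def2.1.3}, hence $O(\delta)$ for any of the usual conventions). One can equally run the argument through $\delta'$-slimness of the geodesic triangle with vertices $x,p,y$ and side $[p,y]\subseteq\gamma$, finding a point of $[x,p]\cup[x,y]$ within $\delta'$ of $q$; this requires a short extra case split according to which side that point lies on, but yields the same conclusion.
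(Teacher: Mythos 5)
Your argument is correct and complete: the bounds $0\le t\le d(p,y)$ legitimise the choice of the witness $q$ on the subsegment of $\gamma$ from $p$ to $y$, the four-point inequality (\ref{eq2.2}) based at $p$ with intermediate point $y$ gives $d(x,q)\le d(x,p)-t+2\delta$, and comparing with $d(x,q)\ge d(x,p)$ (valid since $q\in\gamma$ and $p\in\pi_\gamma(x)$) forces $t\le 2\delta$, hence $d(x,y)=d(x,p)+d(p,y)+O(\delta)$. Note that the paper itself gives no proof of this statement --- it is quoted from \cite{maher2018random} --- and your proof is exactly the standard Gromov-product argument behind that citation; it has the mild additional merit of using only the four-point condition of Definition \ref{def2.1.3} plus the existence of the single geodesic $\gamma$, so no global geodesicity of $X$ is needed.
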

Clearly from (\ref{eq_2}), the nearest point projection is coarsely well-defined: the diameter of $\pi_\gamma(X)$ is bounded uniformly by a multiple of $\delta$.

Using this inequality, we can deduce the following estimate, which essentially translates the thinness of geodesic triangles in Gromov hyperbolic spaces into the language of nearest point projection:
\begin{corollary}
Let $X$ be a geodesic $\delta$-hyperbolic space. Let $x,y,o\in X$ be three points and $\gamma\coloneqq[o,y]$ be a geodesic segment. Then for any $p\in \pi_\gamma(x)$,
\begin{align}\label{eq_3}
    \langle x,y\rangle_o=d_X(o,p)+O(\delta).
\end{align}
\end{corollary}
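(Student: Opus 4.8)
The plan is to expand the Gromov product using its definition (\ref{eq2.1}) and then apply the reverse triangle inequality of Proposition \ref{prop_proj_geo} at the two \emph{endpoints} of the geodesic $\gamma=[o,y]$. Writing $\langle x,y\rangle_o=\tfrac12\big(d_X(x,o)+d_X(y,o)-d_X(x,y)\big)$, I would invoke (\ref{eq_2}) once with the point $o\in\gamma$, giving $d_X(x,o)=d_X(x,p)+d_X(p,o)+O(\delta)$, and once with the point $y\in\gamma$, giving $d_X(x,y)=d_X(x,p)+d_X(p,y)+O(\delta)$.

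Substituting both expansions into the Gromov product cancels the common term $d_X(x,p)$ and yields $\langle x,y\rangle_o=\tfrac12\big(d_X(p,o)+d_X(y,o)-d_X(p,y)\big)+O(\delta)$. Now comes the one genuinely geometric ingredient beyond Proposition \ref{prop_proj_geo}: since $p$ lies on the honest geodesic segment $\gamma=[o,y]$, we have the exact identity $d_X(o,y)=d_X(o,p)+d_X(p,y)$ with no additive error. Replacing $d_X(y,o)$ by $d_X(o,p)+d_X(p,y)$ in the previous display makes the $\pm d_X(p,y)$ terms cancel, leaving $\langle x,y\rangle_o=\tfrac12\big(2\,d_X(o,p)\big)+O(\delta)=d_X(o,p)+O(\delta)$, which is (\ref{eq_3}).

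I do not expect any real obstacle: the whole argument is a two-line manipulation, and the only point requiring a little care is the bookkeeping of additive constants. Since Proposition \ref{prop_proj_geo} is applied only twice and no multiplicative distortion enters, the accumulated error stays $O(\delta)$ in the sense fixed at the beginning of the section. If one prefers a more symmetric formulation, one can instead record that both $\langle x,o\rangle_p$ and $\langle x,y\rangle_p$ are $O(\delta)$ --- i.e.\ $p$ coarsely realizes the projection of $x$ to each of the two sub-segments of $\gamma$ cut off by $p$ --- but the direct substitution above is the shortest route.
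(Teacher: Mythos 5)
Your argument is correct and is exactly the paper's proof: the paper's one-line justification is to apply the reverse triangle inequality (\ref{eq_2}) to the terms $d_X(x,o)$ and $d_X(x,y)$ in the definition (\ref{eq2.1}), which is precisely the substitution you carry out, with the exact additivity $d_X(o,y)=d_X(o,p)+d_X(p,y)$ along $\gamma$ finishing the computation. No gaps; your bookkeeping of the $O(\delta)$ errors is fine.
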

\begin{proof}
Applying (\ref{eq_2}) for $d_X(x,o)$ and $d_X(x,y)$ appeared in (\ref{eq2.1}) yields the result.
\end{proof}

Now we can prove the following lemma about the distance between two boundary points under a visual metric:
\begin{lemma}\label{lem_vm_est_qg}
Let $X$ be a $\delta$-hyperbolic geodesic space. Fix a base point $o\in X$. Let $\rho_{o,b}$ be a visual metric from $o$ with parameter $b\in(1,1+\varepsilon)$ for sufficiently small $\varepsilon>0$. Then for every large enough $E\gg 1$, $\lambda\geq 1$ and $k\geq 0$, there exists a constant $C\coloneqq C(b,k,E,\delta)>0$ such that for every two distinct points $\xi,\eta\in\partial \Gamma$ and for any $\ell_1,\ell_2$ continuous $(\lambda,k)$-quasi-geodesics connecting $o$ to $\xi$ and $\eta$ respectively, if
$$r\coloneqq \sup\{t\geq 0\colon d_X(\ell_1(t),\ell_2(t))\leq E\},$$
then 
\begin{align}\label{vm_est_qg}
b^{-\lambda r}/C\leq \rho_{o,b}(\xi,\eta)\leq Cb^{-r/\lambda}.
\end{align}
\end{lemma}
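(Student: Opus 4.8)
The plan is to relate the quantity $r$ to the Gromov product $\langle\xi,\eta\rangle_o$ up to multiplicative and additive errors controlled by $\lambda$, $k$, $E$ and $\delta$, and then feed this into the visual metric estimate \eqref{est_visual_metric}. By the V\"as\"al\"a theorem it suffices to show that
$$\tfrac{1}{\lambda}r - C' \;\leq\; \langle\xi,\eta\rangle_o \;\leq\; \lambda r + C'$$
for a constant $C' = C'(b,k,E,\delta)$, since combining this with \eqref{est_visual_metric} and absorbing constants into $C$ yields \eqref{vm_est_qg} (the asymmetry in the $\lambda$ exponents in \eqref{vm_est_qg} comes precisely from the two-sided quasi-isometry bound).

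First I would reduce from quasi-geodesics to genuine geodesics. By the Morse Lemma for infinite quasi-geodesics (available via Proposition \ref{prop_folklore}, working inside the ultracompletion $u(X)$ if necessary), each continuous $(\lambda,k)$-quasi-geodesic $\ell_i$ from $o$ to the boundary point lies within Hausdorff distance $M = M(\lambda,k,\delta)$ of an honest geodesic ray $\gamma_i$ from $o$ to the same endpoint; moreover one can arrange the fellow-travelling to be roughly synchronised in the arc-length parameter, i.e. $d_X(\ell_i(t),\gamma_i(\lceil t/\lambda\rceil))$ and $d_X(\gamma_i(s),\ell_i(\lambda s))$ are $O(M)$. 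Then the ``divergence time'' $r$ for the pair $(\ell_1,\ell_2)$ is comparable, after rescaling by $\lambda$ and shifting by $O(M+E)$, to the divergence time $r_0 := \sup\{t : d_X(\gamma_1(t),\gamma_2(t)) \leq E_0\}$ for the geodesic rays, for a suitable threshold $E_0$. Finally, a standard thin-triangles argument for geodesic rays in a $\delta$-hyperbolic space shows $\langle\xi,\eta\rangle_o = r_0 + O(\delta + E_0)$: along the initial segment up to parameter $r_0$ the two rays $\delta$-fellow-travel, and beyond it they diverge, so taking $(x_n) = (\gamma_1(n))$, $(y_n) = (\gamma_2(n))$ one computes $\langle x_n,y_n\rangle_o \to r_0 + O(\delta)$ and applies \eqref{product_est} to pass to $\langle\xi,\eta\rangle_o$. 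Chaining these comparisons gives the desired two-sided bound relating $r$ and $\langle\xi,\eta\rangle_o$.

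The main obstacle, and the step requiring the most care, is the synchronisation of parameters in the Morse Lemma step: the Morse Lemma as usually stated only gives Hausdorff closeness of the \emph{images}, not a controlled correspondence of parameters, and in a non-proper setting one must be slightly careful invoking it. I would handle this by using that $\ell_i$ is \emph{continuous} and a $(\lambda,k)$-quasi-geodesic, so its restriction to $[0,t]$ has length between $t/\lambda - k$ and arc-length reparametrisable; combined with the fact that nearest-point projection to a geodesic ray is coarsely Lipschitz and coarsely distance-nonincreasing (Proposition \ref{prop_proj_geo} and its corollary), one shows the projection of $\ell_i(t)$ to $\gamma_i$ is within $O(M)$ of $\gamma_i(t/\lambda)$ and within $O(M)$ of $\gamma_i(\lambda t)$ from the other side. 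Everything else — the thin-triangles estimate for the Gromov product of two geodesic rays, and the passage to the boundary via \eqref{product_est} — is routine $\delta$-hyperbolic bookkeeping, and the constant $C$ is then assembled from $M(\lambda,k,\delta)$, $E$, $\delta$ and the base $b$ (the $b$-dependence entering only through exponentiating the additive $O(\cdot)$ errors into a multiplicative constant).
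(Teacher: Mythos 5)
Your overall strategy (compare $r$ with $\langle\xi,\eta\rangle_o$ up to a factor $\lambda$ and additive error, then apply \eqref{est_visual_metric}) matches the paper's, but your implementation has a genuine gap at the synchronisation step. The claims $d_X\big(\ell_i(t),\gamma_i(t/\lambda)\big)=O(M)$ and $d_X\big(\gamma_i(s),\ell_i(\lambda s)\big)=O(M)$ are false for $\lambda>1$: Morse stability controls images only, and the nearest-point projection of $\ell_i(t)$ to $\gamma_i$ is pinned down only within the parameter window $[t/\lambda-k-O(M),\ \lambda t+k+O(M)]$, whose width grows linearly in $t$; no coarse-Lipschitz property of the projection can improve this, since $\ell_i$ may travel at any speed between $1/\lambda$ and $\lambda$. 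Concretely, take geodesic rays $\gamma_1,\gamma_2$ from $o$ with $\langle\xi,\eta\rangle_o=T$ large and set $\ell_1(t)=\gamma_1(\lambda t)$, $\ell_2(t)=\gamma_2(t/\lambda)$: these are continuous $(\lambda,0)$-quasi-geodesics to $\xi,\eta$, yet $d_X\big(\ell_1(t),\ell_2(t)\big)\geq(\lambda-\tfrac{1}{\lambda})t$, so $r\leq E/(\lambda-\tfrac{1}{\lambda})$ is bounded independently of $T$. Hence your claimed two-sided comparison of $r$ with the geodesic divergence time $r_0\approx T$ fails in one direction, and with it the left-hand inequality of \eqref{vm_est_qg}: $b^{-\lambda r}/C$ stays bounded below while $\rho_{o,b}(\xi,\eta)\leq b^{-T}\to 0$. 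What your chain does correctly deliver is the one-sided bound $\langle\xi,\eta\rangle_o\geq r/\lambda-C'$, i.e.\ the right-hand inequality of \eqref{vm_est_qg}.

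Two further remarks. First, this is not only your problem: the same reparametrisation example shows the statement itself is problematic for genuinely distorted parametrisations, and the paper's own proof (which proceeds differently: no Morse lemma, but working at the single parameter $r$ where $d_X(\ell_1(r),\ell_2(r))=E$, identifying $d_X(o,p)$ with $\langle x,y\rangle_o$ via \eqref{eq_2}--\eqref{eq_3} and then invoking quasi-geodesicity) conceals the same issue in the step ``$\langle x,y\rangle_o=\langle\xi,\eta\rangle_o+O(\delta)$ for $E$ large''. The lemma is only applied with $\lambda=1$ (see the remark following it), and in that case your synchronisation is valid up to an additive $k+O(M)$, so your Morse-plus-divergence-time argument does go through and gives a correct, if more roundabout, alternative to the paper's projection computation; state it for $\lambda=1$, or for quasi-geodesics with $|d_X(o,\ell_i(t))-t|=O(1)$. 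Second, be careful that geodesic rays from $o$ to boundary points need not exist in a non-proper space, so either keep the continuous $(1,N\delta)$-quasi-geodesics of Proposition \ref{prop_folklore} or carry out the comparison in the ultracompletion as you suggest, checking that Gromov products there agree with those in $X$ up to $O(\delta)$.
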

\begin{proof}
By continuity of $\ell_1$ and $\ell_2$, if $x\coloneqq\ell_1(r)$ and $y\coloneqq \ell_2(r)$, then $d_X(x,y)=E$. Let $\gamma\coloneqq[o,y]$ be a geodesic segment and $p\in \pi_\gamma(x)$. Applying the reverse triangle inequality (\ref{eq_2}) twice, we have
$$d_X(p,y)=d_X(x,y)-d_X(x,p)+O(\delta)=E-d_X(x,o)+d_X(o,p)+O(\delta).$$
Using the fact that $d_X(o,y)=d_X(o,p)+d_X(p,y)$, we can further deduce that
$$d_X(o,p)=\frac{1}{2}\left(d_X(x,o)+d_X(y,o)-E\right)+O(\delta).$$
Using the $(\lambda,k)$-quasi-geodesicity, we have the inequality
\begin{align}\label{ineq_1}
    \frac{r}{\lambda}-k-E/2+O(\delta)\leq d_X(o,p)\leq\lambda r+k-E/2 +O(\delta).
\end{align}
But by (\ref{product_est}), when $E$ is sufficiently large, we have $\langle x,y\rangle_o=\langle \xi,\eta\rangle_o+O(\delta)$. Combining this fact with (\ref{eq_3}) and (\ref{ineq_1}), we can deduce that
$$\frac{r}{\lambda}-k-E/2+O(\delta)\leq \langle \xi,\eta\rangle_o\leq\lambda r+k-E/2 +O(\delta).$$
By setting $C\coloneqq 4b^{k+E/2+O(\delta)}$, we can deduce the desired inequality via applying the estimate (\ref{est_visual_metric}).
\end{proof}
\begin{remark}
As mentioned in the proof of Proposition \ref{prop_folklore}, in a geodesic $\delta$-hyperbolic space, each boundary point $\xi \in \partial X$ can be connected to the base point $o$ by a continuous $(1,N\delta)$-quasi-geodesic. So the $\lambda$ from Lemma \ref{lem_vm_est_qg} can be taken as $1$.
\end{remark}

At this moment, we introduce the following notion:
\begin{definition}[Linear connectedness]\label{def_lc}
A metric space $(X,d_X)$ is {\it linearly connected} if there is a constant $L> 0$ such that for each pair $x,y\in X$ there is a compact connected set $K\subset X$ containing $x,y$ with diameter at most $Ld_X(x,y)$.
\end{definition}

The application of Lemma \ref{lem_vm_est_qg} is the following proposition inspired by \cite[Proof of Proposition 5.2]{mackey2020quasi} and is an improvement of \cite[Proposition 2.2]{wright2023spheres}.

For notations, let $S_o(r)$ be the sphere centred at $o$ of radius $r>0$ and $B_o(r)$ be the ball centred at $o$ of radius $r>0$.

\begin{proposition}\label{prop_wright}
Let $\Gamma$ be a Gromov hyperbolic simplicial graph and let $o \in \Gamma$ be a base point. Suppose that the following holds.
\begin{enumerate}[label=(G\arabic*)]
    \item Every vertex of $\Gamma$ is adjacent to point of $\Gamma$ that is $1$ farther from $o$. \label{G1}
\end{enumerate}
If in addition there is some $D>0$ such that for all $r \geq0$, the following conditions are satisfied:
\begin{enumerate}[label=(G\arabic*)]
    \setcounter{enumi}{1}
    \item For every $z \in S_o(r)$ and $x, y \in S_{o}(r+1) \cap B_z(1)$ there exists a path
        $$x=x_0,\ x_1,\ \dots,\ x_{\ell}=y$$
        with
        $$x_i \in\left(\Gamma-B_o(r)\right) \cap B_x(D)$$
        for $0 \leq i \leq \ell$. \label{G2}
    \item For every adjacent pair $x, y \in S_o(r)$ there exists a path
        $$x=x_0,\ x_1,\ \dots,\ x_{\ell}=y$$
        with
        $$x_i \in\left(\Gamma-B_o(r)\right) \cap B_x(D)$$
    for $0<i<\ell$. \label{G3}
\end{enumerate}
Then equipped with the visual metric $\rho_{o,b}$ from $o$ with parameter $b\in (1,1+\varepsilon)$ for some small enough $\varepsilon>0$, the Gromov boundary $\partial\Gamma$ is linearly connected and path connected.
\end{proposition}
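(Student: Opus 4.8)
The plan is to establish the stronger assertion that there is a constant $L>0$ so that any two distinct points $\xi,\eta\in\partial\Gamma$ are joined by a \emph{continuous path} in $\partial\Gamma$ of $\rho_{o,b}$-diameter at most $L\,\rho_{o,b}(\xi,\eta)$. Since the image of such a path is compact and connected, this yields linear connectedness (Definition \ref{def_lc}) and path connectedness simultaneously, so we may assume $\partial\Gamma$ has at least two points. By Proposition \ref{prop_folklore} together with the remark after Lemma \ref{lem_vm_est_qg}, $\xi$ and $\eta$ are joined to $o$ by continuous $(1,N\delta)$-quasi-geodesic rays $\ell_1,\ell_2$; fixing a large constant $E$ and putting $r\coloneqq\sup\{t\ge 0\colon d(\ell_1(t),\ell_2(t))\le E\}$, Lemma \ref{lem_vm_est_qg} (with $\lambda=1$) gives $b^{-r}/C\le\rho_{o,b}(\xi,\eta)\le C b^{-r}$ with $C=C(b,E,\delta)$. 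Hence it is enough to produce a path of diameter $O(b^{-r})$.

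The first input is \ref{G1}: iterating it, every vertex $v$ starts a ray $v=v_0,v_1,v_2,\dots$ with $d(o,v_i)=d(o,v)+i$; as the distance to $o$ strictly increases along this ray, concatenating it with a geodesic $[o,v]$ produces a genuine geodesic ray from $o$. For a vertex $v$ and $t\ge 0$ we define the \emph{shadow} $\mathrm{Sh}_t(v)\coloneqq\{\zeta\in\partial\Gamma\colon\text{some }(1,N\delta)\text{-quasi-geodesic ray }[o,\zeta)\text{ meets }B_v(t)\}$. Hyperbolicity, Lemma \ref{lem_vm_est_qg}, and the existence of such ``ascending'' rays give at once: $\mathrm{Sh}_t(v)\neq\emptyset$; its diameter is at most $C(t)\,b^{-d(o,v)}$, because two quasi-geodesic rays meeting $B_v(t)$ are $O(t+\delta)$-close at parameter $d(o,v)$, so the quantity $r$ of Lemma \ref{lem_vm_est_qg} for the corresponding pair of boundary points is at least $d(o,v)-O(t+\delta)$; if $v,w$ are adjacent then $\mathrm{Sh}_t(v)\cap\mathrm{Sh}_t(w)\neq\emptyset$ for $t\ge 1$ (the ascending ray through $v$ passes within $1$ of $w$); and every $\zeta\in\partial\Gamma$ lies in $\mathrm{Sh}_1(w)$ for a vertex $w$ of any prescribed (large) distance from $o$, namely one near a quasi-geodesic ray to $\zeta$.

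The heart of the proof is to show, via \ref{G2} and \ref{G3}, that shadows are path connected in a quantitatively controlled way: there is $t'=t'(t,D,\delta)\ge t$ such that any two points of $\mathrm{Sh}_t(v)$ are joined by a path contained in $\mathrm{Sh}_{t'}(v)$, hence of diameter $O(b^{-d(o,v)})$. We prove this by an infinite refinement. Given $\zeta,\zeta'\in\mathrm{Sh}_t(v)$ with $s\coloneqq d(o,v)$, run quasi-geodesic rays to $\zeta,\zeta'$ out to parameter $s+1$; they arrive at vertices $x,x'$ at bounded distance from $v$, hence from each other, with $\zeta\in\mathrm{Sh}_1(x)$ and $\zeta'\in\mathrm{Sh}_1(x')$. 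Now \ref{G2} and \ref{G3} are used to join $x$ to $x'$ by a path all of whose vertices lie in $\bigl(\Gamma-B_o(s)\bigr)\cap B_v(\mathrm{const})$: take a geodesic from $x$ to $x'$ and ``push it out of $B_o(s)$'' by replacing each maximal subpath lying on a sphere $S_o(m)$ with $m\le s$ by a detour through $\Gamma-B_o(m)$, one edge at a time via \ref{G3}, and reattaching the new level-$(m+1)$ vertices to the flanking level-$(m+1)$ vertices of that subpath via \ref{G2}; finitely many rounds clear all offending levels, and the resulting path --- of uncontrolled length, but confined to a ball of radius depending only on $t,D,\delta$ about $v$ --- has successive shadows overlapping consecutively. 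Taking boundary points of ascending rays through its vertices, we get a finite chain $\zeta,\zeta_1,\dots,\zeta_{p-1},\zeta'$ that subdivides the problem into sub-problems one level deeper, each confined to the union of two shadows of level-$(s+1)$ vertices; iterating, the $n$-th generation pieces are shadows of level-$(s+n)$ vertices, of diameter $O(b^{-(s+n)})\to 0$, while the whole construction stays inside the single bounded-diameter set $\mathrm{Sh}_{t'}(v)$, so the refinements converge uniformly to a genuine continuous path from $\zeta$ to $\zeta'$ inside $\mathrm{Sh}_{t'}(v)$. We expect this paragraph to be the main obstacle: because \ref{G2} and \ref{G3} furnish no length bound, the diameter of the limiting path cannot be controlled term by term and must instead be enforced by confining every stage of the refinement to one shadow, which demands a careful, scale-invariant accounting of the distance constants produced at each level.

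Granting the shadow lemma, the proof finishes quickly. For distinct $\xi,\eta$ with divergence radius $r$, pick vertices $a,a'$ at distance $\le 1$ from $\ell_1(r),\ell_2(r)$ on the respective rays, so that $\xi\in\mathrm{Sh}_1(a)$, $\eta\in\mathrm{Sh}_1(a')$ and $d(a,a')\le E+O(\delta)$, and take a geodesic segment $a=x_0,\dots,x_k=a'$ in $\Gamma$; it has length $\le E+O(\delta)$, so all its vertices lie at distance $\ge r-E-O(\delta)$ from $o$. By the previous paragraph the shadows $\mathrm{Sh}_1(x_i)$ are path connected, overlap consecutively, and have diameter $O(b^{-(r-E)})=O(b^{-r})$, so their union is a path-connected set of diameter $O(b^{-r})$ containing $\xi$ and $\eta$. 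A path inside this union from $\xi$ to $\eta$ then has diameter $O(b^{-r})=O(\rho_{o,b}(\xi,\eta))$, the implied constant depending only on $b,D,E,\delta$. (When $r$ is below a fixed threshold, $b^{-r}$ is bounded below; there one instead takes a path inside $\mathrm{Sh}_{t'}(o)=\partial\Gamma$, whose diameter is a fixed constant, and enlarges $L$ to absorb this case.) Therefore $\partial\Gamma$, with the visual metric $\rho_{o,b}$, is linearly connected and, in particular, path connected.
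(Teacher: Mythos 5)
Your outline is, in substance, a reconstruction of Wright's nested-interpolation argument, which is exactly what the paper's proof adapts (with Lemma \ref{lem_vm_est_qg} supplying the visual-metric estimates); the reduction to producing a path of diameter $O(b^{-r})$, the use of \ref{G1} to build ascending geodesic rays, the elementary properties of shadows, the pushing-out procedure from \ref{G2} and \ref{G3}, and the final assembly along a geodesic $[a,a']$ are all sound. But the step you yourself flag as the main obstacle --- that any two points of $\mathrm{Sh}_t(v)$ are joined by a path of diameter $O(b^{-d(o,v)})$ --- is the entire content of the proposition, and your sketch does not establish it. The mechanism you propose, confining \emph{every} stage of the refinement to one shadow $\mathrm{Sh}_{t'}(v)$ with $t'=t'(t,D,\delta)$, does not follow from the confinement you actually prove: one refinement step keeps its new vertices within some radius $R=R(t,D,\delta)$ of the vertex being refined, but after $n$ generations the vertices are only within roughly $nR$ of $v$ while their level has advanced only by roughly $n$; since $R>1$, the resulting lower bound $\langle u,v\rangle_o\ge d(o,v)-\tfrac{1}{2}(R_1+n(R-1))$ degrades with $n$, so no fixed $t'$ is obtained by this counting. (A smaller point: in the pushing-out step, the reattachment via \ref{G2} needs the flanking vertices of the replaced subpath to lie one level higher, so one must process the lowest offending level first; ``each maximal subpath lying on a sphere $S_o(m)$ with $m\le s$'' does not guarantee this as written.)

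What closes the gap is a telescoping estimate rather than stage-wise confinement. If a pair of boundary points is refined using a vertex $w$ at level $\ell$, every vertex $u$ appearing in that refinement satisfies $d(o,u)\ge \ell$ and $d(u,w)\le R$, and a thin-triangle argument shows that the ascending geodesic ray through $u$ passes within $R+O(\delta)$ of $w$; hence all boundary points created at that stage lie in $\mathrm{Sh}_{R+O(\delta)}(w)$, a set of diameter at most $C(R)\,b^{-\ell}$. Since levels increase by at least one per generation, every point of the limiting object lies within $\sum_{n\ge 1}C(R)\,b^{-(s+n)}=O(b^{-s})$ of $\zeta$, which yields the diameter bound, and the same tail estimate (after parametrizing the refinement tree over a dense subset of $[0,1]$) gives the uniform Cauchy property needed to extract a genuine continuous path. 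One must also verify scale-invariance: after the first step the sub-problems must again be pairs of points in $\mathrm{Sh}_c(w)$ for a constant $c$ depending only on $\delta$ (not on $t$), so that $R$ stays fixed through the recursion. Until this accounting is carried out, the proposal has a genuine gap at its central step --- precisely the estimate the paper imports from the proof of \cite[Proposition 2.2]{wright2023spheres}.
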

\begin{proof}
The proof is a straightforward adaptation of the proof of \cite[Proposition 2.2]{wright2023spheres}. Let $\xi_0,\xi_1\in\partial X$ be any two points on the Gromov boundary. Now connect $\xi_0$ and $\xi_1$ to $o$ respectively by continuous $(1,k)$-quasi-geodesic rays $\gamma_0$ and $\gamma_1$. By the same arguments in \cite[Proof of Proposition 2.2]{wright2023spheres}, \ref{G1} ensures the existence of a dense subset $I\subset [0,1]$ such that for all $t\in I$, there is a geodesic ray $\gamma_t\subset \Gamma$ connecting $o$ to a point $c(t)\in\partial \Gamma$. Since the family $\left(\gamma_t\right)_{t\in I\cup\{0,1\}}$ consists of continuous $(1,k)$-quasi-geodesic rays, by Lemma \ref{lem_vm_est_qg}, we can deduce from \ref{G2} and \ref{G3} the same estimation as in \cite[Proof of Proposition 2.2]{wright2023spheres}, {\it i.e.} there exists $L>0$ independent of $\xi_0$ and $\xi_1$ such that the diameter $\mathrm{diam}\big(c(I)\big)<L\rho_{o,b}(\xi_0,\xi_1)$ and that $c\colon I\to \partial \Gamma$ is uniformly continuous. The desired path is then the closure of $c(I)$.
\end{proof}
\begin{remark}
Proposition \ref{prop_wright} cannot be obtained by applying \cite[Proposition 2.2]{wright2023spheres}.
\end{remark}
\section{Fine curve graph and its subgraphs}\label{sec-3}

\subsection{Fine curve graph.} Let $S$ be an orientable connected closed surface of finite type with genus $g \geq 2$. Such a surface admits a complete hyperbolic metric. In a recent paper, Bowden, Hensel, and Webb introduced the {\it fine curve graph} of $S$ \cite{bowden2022quasi}, a combinatorial tool designed to study the homeomorphism group $\homeo(S)$ and the diffeomorphism group $\mathrm{Diff}(S)$ of $S$. This construction serves as an analogue to the well-known curve graph.

An {\it essential simple closed smooth curve} $\gamma$ on $S$ is a smooth embedding of the circle, $\gamma: S^1 \hookrightarrow S$, with the property that $\gamma$ does not bound a disc. Unless stated otherwise, all curves considered on $S$ will be assumed to be essential, simple, closed, and smooth.

The {\it fine curve graph} $\mathcal{C}^\dagger(S)$ is a graph whose vertices correspond to essential simple closed smooth curves on $S$. Two vertices are connected by an edge if the corresponding curves are disjoint. Equipped with the simplicial metric, the fine curve graph is Gromov hyperbolic \cite{bowden2022quasi}.

\begin{remark}\label{rem_topological_version}
It is possible to construct the fine curve graph using non-differentiable curves, {\it i.e.}, $\mathcal{C}^0$-embeddings of $S^1 \hookrightarrow S$. This construction is referred to as the {\it topological version of the fine curve graph}. Notably, any topological curve can be isotoped to a differentiable curve that remains disjoint from it. This can be achieved by isotopically smoothing (or "mollifying") the curve and then moving it slightly. Consequently, there exists a $1$-Lipschitz embedding of the fine curve graph of differentiable curves into the fine curve graph of non-differentiable curves, which is also $1$-dense. Thus, the two versions of the fine curve graph are quasi-isometric, meaning they are equivalent from the perspective of large-scale geometry.
\end{remark}

Next, we examine the relationship between the fine curve graph and the classical curve graph. For any orientable surface, including the torus, there is a natural projection from the fine curve graph to the classical curve graph.

\begin{proposition}\label{prop_1}
Let $S$ be as above. Then the map $[\cdot] \colon (\mathcal{C}^\dagger(S), d^\dagger) \to (\mathcal{C}(S), d_S)$, which sends a simple closed curve to its isotopy class, is $1$-Lipschitz.
\end{proposition}

\begin{proof}
Let $a, b \in \mathcal{C}^\dagger(S)$ be any two distinct simple closed curves on $S$. Consider a geodesic $a = x_0, x_1, \dots, x_n = b$ in $\mathcal{C}^\dagger(S)$ connecting $a$ to $b$. For each $i = 0, \dots, n - 1$, the curves $x_i$ and $x_{i+1}$ are disjoint. Moreover, either $x_i$ is isotopic to $x_{i+1}$, or $[x_i] \neq [x_{i+1}]$. 

In the latter case, the isotopy classes $[x_i]$ and $[x_{i+1}]$ have disjoint representatives, specifically the curves $x_i$ and $x_{i+1}$. This implies that $[x_i]$ and $[x_{i+1}]$ are adjacent in $\mathcal{C}(S)$. Consequently, we have
\[
d_S\big([x_i], [x_{i+1}]\big) \leq d^\dagger(x_i, x_{i+1})
\]
in each case. Therefore,
\[
d_S\big([a], [b]\big) \leq \sum_{i=0}^{n-1} d_S\big([x_i], [x_{i+1}]\big) \leq \sum_{i=0}^{n-1} d^\dagger(x_i, x_{i+1}) = n = d^\dagger(a, b).
\]
This shows that the map $[\cdot] \colon \mathcal{C}^\dagger(S) \to \mathcal{C}(S)$ is $1$-Lipschitz.
\end{proof}

Suppose now that $S$ is equipped with a hyperbolic metric. Then each simple closed curve on $S$ is isotopic to a unique geodesic representative \cite[Proposition 1.3]{farb2011primer}. Thus, the map $\gamma \colon \mathcal{C}(S) \hookrightarrow \mathcal{C}^\dagger(S)$, defined by sending an isotopy class $a \in \mathcal{C}(S)$ to its geodesic representative $\gamma(a) \in \mathcal{C}^\dagger(S)$, is well-defined. A similar argument to that in Proposition \ref{prop_1} shows the following result.

\begin{proposition}\label{prop_isom}
Let $S$ be a hyperbolic surface. Then the map $\gamma \colon \mathcal{C}(S) \hookrightarrow \mathcal{C}^\dagger(S)$, which assigns to each isotopy class $a \in \mathcal{C}(S)$ its unique geodesic representative $\gamma(a) \in \mathcal{C}^\dagger(S)$, is an isometric embedding.
\end{proposition}

\begin{proof}
Let $\alpha, \beta \in \mathcal{C}(S)$, and let $\alpha = y_0, \dots, y_m = \beta$ be a geodesic in $\mathcal{C}(S)$ connecting $\alpha$ to $\beta$. For each $i = 0, \dots, m - 1$, the isotopy classes $y_i$ and $y_{i+1}$ admit disjoint representatives. Since geodesics are in minimal position \cite[Corollary 1.9]{farb2011primer}, it follows that the geodesic representatives $\gamma(y_i)$ and $\gamma(y_{i+1})$ must also be disjoint. Thus, we have $d^\dagger\big(\gamma(y_i), \gamma(y_{i+1})\big) = 1$ for each $i$.

As a result,
\[
d^\dagger\big(\gamma(\alpha), \gamma(\beta)\big) \leq \sum_{i=0}^{m-1} d^\dagger\big(\gamma(y_i), \gamma(y_{i+1})\big) = m = d_S(\alpha, \beta),
\]
showing that $\gamma$ is $1$-Lipschitz. 

Now, note that the composition $[\cdot] \circ \gamma = \mathrm{Id} \colon \mathcal{C}(S) \to \mathcal{C}(S)$ is the identity map. By Proposition \ref{prop_1}, we have
\[
d_S(\alpha, \beta) = d_S\big([\gamma(\alpha)], [\gamma(\beta)]\big) \leq d^\dagger\big(\gamma(\alpha), \gamma(\beta)\big).
\]
Therefore, $d_S(\alpha, \beta) = d^\dagger\big(\gamma(\alpha), \gamma(\beta)\big)$, which shows that $\gamma$ is an isometry. 

Finally, it is clear that $\gamma$ is also an embedding, completing the proof.
\end{proof}

\subsection{Distance estimation.} Similar to the curve graph, we can estimate distances between certain curves by examining the cardinality of their intersections. To proceed, we introduce the following concept:

\begin{definition}[Transversality]
Let $\alpha, \beta$ be two simple closed curves on $S$. We say that they are {\it transverse} if exactly one of the following holds:
\begin{itemize}
    \item $\alpha \cap \beta = \emptyset$;
    \item For each point $p \in \alpha \cap \beta$, there exists an open neighbourhood $N \subset S$ of $p$ and a diffeomorphism $\varphi \colon N \to D \subset \mathbb{R}^2$ onto the unit disc $D$, such that $\varphi(\alpha \cap N) = (-1,1) \times \{0\}$ and $\varphi(\beta \cap N) = \{0\} \times (-1,1)$.
\end{itemize}
\end{definition}

Note that if two curves are transverse, then by the compactness of the surface, they intersect at only finitely many points.

A {\it bigon} between two transverse curves $\alpha$ and $\beta$ on a surface is a connected component of the complement $S \setminus (\alpha \cup \beta)$ that is homeomorphic to a disc and contains exactly one subarc of $\alpha$ and one subarc of $\beta$. Two transverse curves $\alpha$ and $\beta$ are said to be in {\it minimal position} if they contain no bigon \cite[Proposition 1.7]{farb2011primer}.

One advantage of smoothness is that curves can be taken as geodesics under an appropriate hyperbolic metric:

\begin{proposition}[Proposition 10, Exposé 3, \cite{fathi1991travaux}]\label{prop_metric}
Let $S$ be a connected orientable surface of genus $g \geq 2$. Let $\gamma$ be a curve on $S$. Then there exists a hyperbolic metric on $S$ such that $\gamma$ is geodesic. Moreover, if $\gamma$ and $\gamma'$ are two smooth curves in minimal position, then there exists a hyperbolic metric on $S$ such that both $\gamma$ and $\gamma'$ are geodesic.
\end{proposition}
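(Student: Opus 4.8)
The plan is to reduce everything to three classical facts about hyperbolic surfaces: (i) since $g\geq 2$, the surface $S$ carries at least one complete, finite-area hyperbolic metric $h_0$; (ii) in any such metric every essential (hence non-peripheral) simple closed curve is freely homotopic to a unique closed $h_0$-geodesic; and (iii) any two smooth simple closed curves in the same isotopy class are ambient isotopic. Granting these, the proof of the first assertion is a one-line pullback. Fix $h_0$, let $\bar\gamma$ be the $h_0$-geodesic isotopic to $\gamma$, and use (iii) to choose a (smooth) diffeomorphism $\psi\colon S\to S$ with $\psi(\bar\gamma)=\gamma$. Put $h:=(\psi^{-1})^{*}h_0$. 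Then $\psi\colon(S,h_0)\to(S,h)$ is an isometry, so $h$ is again a complete hyperbolic metric on $S$, and $\psi$ sends the $h_0$-geodesic $\bar\gamma$ onto $\gamma$; hence $\gamma$ is an $h$-geodesic. (One works in the smooth category throughout; the isotopy extension theorem lets $\psi$ be taken smooth.)

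For the two-curve statement the scheme is identical once one has the pair version of (iii): \emph{if $(\alpha,\beta)$ and $(\alpha',\beta')$ are two pairs of transverse simple closed curves, each pair in minimal position, with $\alpha$ isotopic to $\alpha'$ and $\beta$ isotopic to $\beta'$, then some ambient isotopy of $S$ carries $(\alpha',\beta')$ to $(\alpha,\beta)$.} Apply this with $(\alpha',\beta')=(\gamma,\gamma')$ and $(\alpha,\beta)$ the pair of $h_0$-geodesics isotopic to $\gamma$ and $\gamma'$: these are automatically transverse and bigon-free, and by the bigon criterion \cite[Proposition~1.7]{farb2011primer} they meet in exactly $i(\gamma,\gamma')$ points, which is also the number of points of $\gamma\cap\gamma'$, so the hypotheses hold. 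The same pullback then produces a hyperbolic metric in which $\gamma$ and $\gamma'$ are simultaneously geodesic.

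To prove the pair version of (iii) I would proceed in two stages. First, using the single-curve case, ambient-isotope so that $\alpha=\alpha'$; a further isotopy supported in a collar of $\alpha$ arranges that $\alpha\cap\beta$ and $\alpha\cap\beta'$ are the same finite subset of $\alpha$ (they have the same cardinality $i(\alpha,\beta)$ because both pairs are bigon-free). Cutting $S$ along $\alpha$ turns $\beta$ and $\beta'$ into two systems of properly embedded essential arcs in the (possibly disconnected) surface $S|\alpha$, sharing the same endpoints, lying in the same isotopy classes rel endpoints, and still bigon-free; the classical fact that essential arc systems in minimal position are unique up to ambient isotopy rel boundary then finishes the job, and the resulting isotopy descends to $S$ fixing $\alpha$ setwise.

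I expect the genuine content — and the one place where the ``minimal position'' hypothesis is actually used — to be precisely this uniqueness of arc systems after cutting along $\alpha$: one must check that bigon-freeness survives the cut, that no complementary arc admits an innermost isotopy onto $\partial(S|\alpha)$ reducing intersections, and then handle the bookkeeping of several arcs simultaneously and the reassembly of the arc-isotopy into an ambient isotopy of $S$ respecting $\alpha$. Everything else (existence of $h_0$, geodesic representatives, smoothing of isotopies) is routine surface theory. An alternative, more self-contained route avoiding (iii) altogether would cut $S$ along $\gamma$ (resp.\ along the $4$-valent graph $\gamma\cup\gamma'$), put hyperbolic metrics with totally geodesic boundary — meeting at right angles at the former intersection points in the two-curve case — of matching boundary lengths on the complementary pieces, and reglue; there the main obstacle becomes the existence of right-angled hyperbolic building blocks with prescribed side lengths and the verification that the reglued metric is $C^{\infty}$ across the seams, with bigon-freeness entering as the guarantee that every complementary piece has negative Euler characteristic.
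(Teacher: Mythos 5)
The paper offers no proof of this proposition at all: it is imported verbatim from Fathi--Laudenbach--Po\'enaru \cite{fathi1991travaux} (Expos\'e 3, Proposition 10). So the only comparison to make is with that classical argument, and your pullback reduction is exactly the standard route: realize the isotopy classes by geodesics of a fixed complete finite-area hyperbolic metric $h_0$, invoke uniqueness of minimal-position configurations up to ambient isotopy to get a diffeomorphism $\psi$ carrying the geodesic configuration onto $(\gamma,\gamma')$, and transport the metric by $\psi$. The one-curve case as you wrote it is complete. For the two-curve case your key lemma (the ``pair version of (iii)'') is the genuine content, as you say, and it is itself a standard fact (it is essentially what FLP prove in Expos\'e 3; see also the material around the bigon criterion and the change-of-coordinates/Alexander-method discussion in \cite{farb2011primer}), so citing it would finish the proof. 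Your sketch of it is the right idea but, as written, has a gap you should be aware of: after cutting along $\alpha$, the assertion that the arcs of $\beta$ and $\beta'$ ``lie in the same isotopy classes rel endpoints'' is not automatic from $\beta\simeq\beta'$ in $S$; matching the arcs up is precisely where an innermost-bigon (or universal-cover/annular-cover lifting) argument, using minimal position, has to be run.

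Two smaller caveats. First, your claim that the geodesic representatives are ``automatically transverse and bigon-free'' tacitly assumes $\gamma$ and $\gamma'$ are not isotopic to one another: if they were isotopic (hence, being in minimal position in the paper's sense, disjoint), their geodesic representatives would coincide, and indeed no hyperbolic metric can make two distinct isotopic curves simultaneously geodesic, since each free homotopy class contains a unique geodesic. This degenerate case is really an imprecision of the statement as quoted (in the paper's applications the curves involved represent distinct isotopy classes in the relevant subsurface), but your proof should state the hypothesis. Second, the alternative cut-and-reglue route you float cannot insist on right angles at the intersection points: a complementary disk region of $\gamma\cup\gamma'$ with exactly four corners would have to be a hyperbolic quadrilateral with angle sum $2\pi$, which does not exist; one would have to allow other angles and argue smoothness across the seams, which is why the pullback argument is the cleaner (and the standard) one.
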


In particular, by combining Proposition \ref{prop_metric} and Proposition \ref{prop_isom}, we obtain the following result:

\begin{corollary}\label{lem_lower_dist}
    For any two vertices $\alpha, \beta \in \mathcal{C}^{\dagger}(S)$ that are in minimal position on $S$, we have $d_{S}([\alpha], [\beta]) = d^{\dagger}(\alpha, \beta)$, where $d_{S}$ denotes the simplicial distance in the curve graph $\mathcal{C}(S)$.
\end{corollary}

Now, let $P \subset S$ be a finite collection of points. Adapting terminology from \cite{bowden2022quasi}, recall that a curve $\gamma \in \mathcal{C}^\dagger(S - P)$ is called {\it surviving} if it remains essential in $S$. We denote by $\mathcal{C}^s(S - P)$ the surviving curve graph of the punctured surface $S - P$. An important property in the metric geometry of the fine curve graph is that it can be approximated by the surviving curve graphs of these punctured subsurfaces:

\begin{proposition}[\cite{bowden2022quasi}]\label{prop_NC_approx}
Suppose that $\alpha,\beta \in \mathcal{C}^{\dagger}(S)$ are transverse, and that $\alpha$ and $\beta$ are in minimal position in $S-P$, where $P\subset S$ is finite and disjoint from $\alpha\cup \beta$. Then 
$$d^{\dagger}(\alpha, \beta)=d_{\mathcal{C}^s(S-P)}\left([\alpha]_{S-P},[\beta]_{S-P}\right)$$
where $[\alpha]_{S-P}$ and $[\beta]_{S-P}$ are respectively the isotopy classes of $\alpha$ and $\beta$ in $S-P$. Moreover, if $\alpha$ and $\beta$ are two non-separating curves, then 
$$d^\dagger(\alpha,\beta)=d_{\mathcal{NC}(S-P)}([\alpha]_{S-P},[\beta]_{S-P}),$$
and that the inclusion map $\mathcal{NC}^\dagger(S)\hookrightarrow\mathcal{C}^\dagger(S)$ is a $(1,2)$-quasi-isometry.
\end{proposition}

Similar to \cite[Lemma 2.1]{masur1999geometry}, we can deduce the following distance estimation in the fine curve graph:
\begin{proposition}\label{lem_upper_dist}
    Let $S$ be a surface with genus $g(S)\geq 2$. For any two transverse curves $\alpha, \beta \in \mathcal{C}^\dagger(S)$, we have $d^\dagger(\alpha, \beta) \leq 2  |\alpha\cap \beta|+1$.
\end{proposition}
\begin{proof}
If $|\alpha \cap \beta| = 0$, then the formula is trivially true by the definition of the simplicial distance in $\mathcal{C}^\dagger(S)$.

If $|\alpha \cap \beta| = 1$, then there is no bigon between $\alpha$ and $\beta$, meaning that these two curves are in minimal position. Since $g(S) \geq 2$, the subsurface $S - (\alpha \cap \beta)$ has positive genus, which implies that we can find a non-separating (and thus surviving) curve disjoint from both $\alpha$ and $\beta$, so that $d^\dagger(\alpha, \beta) = 2$.

Now suppose $|\alpha \cap \beta| \geq 2$. Let $P \subset S$ be a finite set of points such that $\alpha$ and $\beta$ are in minimal position on $S - P$. We can perform the same surgery on $S - P$ for $\beta$ as in \cite[Lemma 2.1]{masur1999geometry}. Note that we will have two possibilities, $\beta_1$ and $\beta_2$, obtained by replacing a subarc of $\beta$ with a segment isotopic to a subarc of $\alpha$. Both $\beta_1$ and $\beta_2$ remain essential on $S - P$ by \cite[Lemma 2.1]{masur1999geometry}. It suffices to show that the resulting curve is a surviving curve. If both $\beta_1$ and $\beta_2$ are non-surviving, then so will $\beta$, which contradicts the assumption that $\beta \in \mathcal{C}^\dagger(S)$ and survives on $S - P$. 

By induction, we can then find a path of length $2 |\alpha \cap \beta| + 1$ connecting $[\alpha]_{S - P}$ to $[\beta]_{S - P}$ in the surviving curve graph $\mathcal{C}^s(S - P)$, which leads to the desired distance estimate in $\mathcal{C}^\dagger(S)$ after applying Proposition \ref{prop_NC_approx}.
\end{proof}

Recall that a {\it regular neighbourhood} $\mathcal{N}$ of a simple closed curve $\gamma$ in $S$ is a locally flat, compact submanifold of $S$ of codimension $0$, which is a topological neighbourhood of $\gamma$ such that the inclusion $\gamma \subset \mathcal{N}$ is a simple homotopy equivalence, and $\gamma$ is a strong deformation retract of $\mathcal{N}$. We note that, by definition, this regular neighbourhood can be chosen arbitrarily small. Furthermore, if the surface is endowed with a metric, then any annular neighbourhood with sufficiently small width will be a regular neighbourhood.

\begin{lemma}\label{lem_pert}
Let $\alpha \in \mathcal{C}^\dagger(S)$ and let $F \subset \mathcal{C}^\dagger(S)$ be a finite collection of pairwise transverse curves. Then there exists a curve $\alpha' \in \mathcal{C}^\dagger(S)$, isotopic to $\alpha$, that is disjoint from $\alpha$ and transverse to all curves in $F$.
\end{lemma}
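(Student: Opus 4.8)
The plan is to first find a curve $\alpha''$ isotopic to $\alpha$ that is transverse to every $\gamma \in F$, while keeping it close enough to $\alpha$ that it is disjoint from $\alpha$, and then, if necessary, perturb once more to guarantee that $\alpha''$ is itself smooth, non-separating and essential. The key tool is that transversality is a generic condition: by the Whitney approximation / Thom transversality theorem, an arbitrarily $C^\infty$-small perturbation of a smooth embedding of $S^1$ can be made transverse to any fixed finite collection of smooth curves. So I would proceed as follows.

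\emph{Step 1: build a tubular neighbourhood of $\alpha$.} Since $\alpha$ is a smooth simple closed curve on $S$, it has a tubular neighbourhood $A \cong S^1 \times (-1,1)$ with $\alpha = S^1 \times \{0\}$. Any curve of the form $\alpha_s := S^1 \times \{s\}$ for $s \neq 0$ is smooth, simple, closed, disjoint from $\alpha$, and isotopic to $\alpha$ in $A$ (hence in $S$); in particular each $\alpha_s$ is essential and non-separating because it is isotopic to $\alpha$, which lies in $\NC^\dagger(S)$. This already produces a large supply of candidate curves disjoint from $\alpha$; the remaining issue is transversality with $F$.

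\emph{Step 2: achieve transversality with $F$ by a small perturbation inside $A$.} Fix $s_0 \neq 0$ small and consider $\alpha_{s_0}$. The set $F$ is a finite collection $\{\gamma_1,\dots,\gamma_m\}$ of smooth curves. Applying the transversality theorem (in the $C^\infty$ topology, supported in the neighbourhood $A' := S^1 \times (s_0/2, 3s_0/2) \subset A$, say), there is an embedding $\alpha'$ of $S^1$ into $A'$, arbitrarily $C^\infty$-close to $\alpha_{s_0}$, which is transverse to each $\gamma_i$ simultaneously; a finite intersection of open dense sets of perturbations is still open dense, so one perturbation works for all of $F$ at once. Because $\alpha'$ lies in $A'$, it is automatically disjoint from $\alpha = S^1 \times \{0\}$. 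Being $C^\infty$-close to the embedding $\alpha_{s_0}$, the map $\alpha'$ is still a smooth embedding and is isotopic to $\alpha_{s_0}$, hence isotopic to $\alpha$; isotopic curves have the same separation type, so $\alpha'$ is non-separating and essential, i.e.\ $\alpha' \in \NC^\dagger(S)$. This gives all the required properties: $\alpha' \in \NC^\dagger(S)$, $\alpha'$ isotopic to $\alpha$, $\alpha' \cap \alpha = \emptyset$, and $\alpha'$ transverse to every $\gamma \in F$.

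\emph{Expected main obstacle.} The genuinely delicate point is not the transversality itself (which is standard) but the bookkeeping needed to keep \emph{all} the desired properties simultaneously: one must be careful that the perturbation producing transversality with $F$ does not destroy disjointness from $\alpha$, does not destroy simplicity (embeddedness) of the curve, and does not change its isotopy class — and hence its essentialness and non-separating status. Confining the whole perturbation to the half-open collar $A'$ that avoids $\alpha$, and invoking the fact that a sufficiently $C^\infty$-small perturbation of an embedding is still an isotopic embedding, handles all three concerns at once, so the argument is short once this packaging is set up. One should also note that $F$ being pairwise transverse is not actually needed for this lemma (it is hypothesised presumably for downstream uniformity); only the finiteness of $F$ and smoothness of its members are used.
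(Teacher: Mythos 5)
Your proof is correct, and it reaches the conclusion by a somewhat different route than the paper. The paper works locally around the curves of $F$: it takes a small annular neighbourhood $A$ of each $\gamma\in F$, cuts $A$ into finitely many rectangles each containing a single strand of $\alpha$, performs an explicit elementary isotopy in each rectangle to produce a push-off that is transverse to $\gamma$ and disjoint from $\alpha$, and then treats the finitely many curves of $F$ by induction, noting that each successive perturbation can be taken small enough not to destroy the transversality already achieved. You instead push $\alpha$ to a parallel copy inside a collar $S^1\times(-1,1)$ of $\alpha$ and invoke genericity of transversality (Thom's theorem: the maps transverse to each closed curve $\gamma_i$ form an open dense set, and a finite intersection of such sets is still open and dense) to obtain in one stroke a $C^\infty$-small perturbation, supported in a sub-collar avoiding $\alpha$, that is transverse to all of $F$ simultaneously; $C^1$-closeness keeps it an embedded curve isotopic to $\alpha$, hence essential and non-separating, so it lies in $\NC^\dagger(S)$. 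Your appeal to the transversality theorem removes the inductive bookkeeping of the paper (no need to check that later perturbations preserve earlier transversality), at the cost of citing a larger black box, whereas the paper's hands-on argument is self-contained and records explicitly that the perturbation is supported in an arbitrarily small neighbourhood --- a feature reused later (e.g.\ to preserve minimal position with $o$ and disjointness from auxiliary curves in Lemmas \ref{lem_trans_all} and \ref{lem_iso}); your collar construction yields the same feature, since the collar may be taken arbitrarily thin, and it would be worth stating that explicitly. You are also right that the pairwise transversality of $F$ (and indeed the hypothesis $\alpha\cap\gamma\neq\emptyset$) is not actually needed for the argument.
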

\begin{proof}
Define 
\[
C \coloneqq \{x \in S : x \in \gamma_i \cap \gamma_j \text{ for some } \gamma_i, \gamma_j \in F \text{ and } i \neq j\}.
\]
Let $N(\alpha)$ be a sufficiently small regular neighbourhood of $\alpha$. Consider a small enough regular neighbourhood $N(C)$ of $C$ and a sufficiently small neighbourhood $N_i$ of each $\gamma_i$, such that $N_i \cap N_j \subset N(C)$ whenever $i \neq j$. First, isotope $\alpha$ to a curve $\alpha_0$ away from $N(C) \cup N(\alpha)$. Note that any isotopy supported within $N_i - N(C)$ will not affect the intersection behaviour between $\alpha_0$ and $\gamma_j$ for all $j \neq i$. Hence, we can isotope $\alpha_0$ within $N_i - \big(N(C) \cup N(\alpha)\big)$ for $i = 1, \dots, |F|$. In this way, we can find a curve $\alpha'$ isotopic to $\alpha$, disjoint from $\alpha$, and transverse to all $\gamma_i \in F$.
\end{proof}

\begin{remark}\label{rem_perturb}
    We note that the perturbation in Lemma \ref{lem_pert} can be performed within an arbitrarily small regular neighbourhood. All \textit{isotopy perturbations} in the sequel will follow the same basic method, although the specific situation might vary slightly.
\end{remark}

\subsection{Non-separating subgraph.} The {\it non-separating fine curve graph} $\NC^\dagger(S)$ is the subgraph of $\mathcal{C}^\dagger(S)$ consisting of vertices corresponding to non-separating curves. When equipped with the simplicial metric, these two connected graphs are quasi-isometric, with the inclusion map being a $(1,1)$-quasi-isometry, and both graphs are simultaneously Gromov hyperbolic. Hence, by Proposition \ref{prop_folklore}, the Gromov boundary $\partial \NC^\dagger(S)$ is homeomorphic to $\partial \mathcal{C}^\dagger(S)$.

To make full use of the known results about the classical non-separating curve graph, let us fix an arbitrary base point $o \in \mathcal{NC}^\dagger(S)$ and denote by $\NCo(S)$ the subgraph of $\mathcal{NC}^\dagger(S)$ consisting of vertices that are all transverse to $o$. This subgraph inherits the subgraph distance $d_o^\dagger$, but not the induced distance $d^\dagger$ from $\NC^\dagger(S)$.

\begin{proposition}\label{prop_incl}
The inclusion map $i: \NCo(S)\hookrightarrow\NC^\dagger(S)$ is a $(1,2)$-quasi-isometry, hence $\partial \NCo(S)\simeq \partial \NC^\dagger(S)$.
\end{proposition}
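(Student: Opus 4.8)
The plan is to show directly that the inclusion $\iota\colon\NCo(S)\hookrightarrow\NC^\dagger(S)$ is an isometric embedding onto a $1$-dense subgraph; this is in particular a $(1,2)$-quasi-isometry, and the asserted homeomorphism of Gromov boundaries then follows from Proposition \ref{prop_folklore}, once one records that $\NCo(S)$ --- being a connected simplicial graph quasi-isometric to the hyperbolic graph $\NC^\dagger(S)$ --- is a geodesic Gromov hyperbolic space.

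First I would dispatch the two easy halves. The bound $d^\dagger(\alpha,\beta)\le d_{\NCo}(\alpha,\beta)$ for $\alpha,\beta\in\NCo(S)$ is free, since every edge-path in $\NCo(S)$ is an edge-path in $\NC^\dagger(S)$. For $1$-density, given $\alpha\in\NC^\dagger(S)$ either $\alpha$ is transverse to $o$ --- in particular if $\alpha\cap o=\emptyset$ --- and then $\alpha\in\NCo(S)$, or else $\alpha$ meets $o$ non-transversally and Lemma \ref{lem_pert} with $F=\{o\}$ furnishes $\alpha'\in\NCo(S)$ with $d^\dagger(\alpha,\alpha')=1$; in either case $d^\dagger\bigl(\alpha,\NCo(S)\bigr)\le 1$.

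The substantive half is the reverse inequality $d_{\NCo}(\alpha,\beta)\le d^\dagger(\alpha,\beta)$, and the idea is to take an $\NC^\dagger(S)$-geodesic joining $\alpha$ to $\beta$ and make all its interior vertices transverse to $o$ by one simultaneous, uniformly small perturbation. Concretely: fix $\alpha,\beta\in\NCo(S)$, let $n\coloneqq d^\dagger(\alpha,\beta)$ (the case $n=0$ being trivial), choose a geodesic $\alpha=\nu_0,\nu_1,\dots,\nu_n=\beta$ in $\NC^\dagger(S)$, and put $\varepsilon\coloneqq\min_{0\le i<n}d(\nu_i,\nu_{i+1})$, which is positive because finitely many disjoint compact curves are involved. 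For each $i$ with $1\le i\le n-1$ for which $\nu_i$ is not transverse to $o$, I would apply Lemma \ref{lem_pert} with $F=\{o\}$, arranging the perturbation inside the $(\varepsilon/3)$-neighbourhood of $\nu_i$ --- which is possible since that modification may be confined to an arbitrarily small neighbourhood of $o$, where one needs only to destroy the tangencies and coincidences of $\nu_i$ with $o$ by an arbitrarily $C^0$-small push --- so as to obtain a smooth simple curve $\mu_i$ that is transverse to $o$, isotopic to $\nu_i$ (hence essential and non-separating, so a vertex of $\NC^\dagger(S)$), and contained in the $(\varepsilon/3)$-neighbourhood of $\nu_i$. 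For the remaining indices set $\mu_i\coloneqq\nu_i$, and note $\mu_0=\alpha$, $\mu_n=\beta$ are already transverse to $o$. Since each $\mu_i$ lies within $\varepsilon/3$ of $\nu_i$ while $d(\nu_i,\nu_{i+1})\ge\varepsilon$, one gets $d(\mu_i,\mu_{i+1})\ge\varepsilon/3>0$ for every $0\le i<n$; hence consecutive $\mu_i$ are disjoint and $\alpha=\mu_0,\mu_1,\dots,\mu_n=\beta$ is an edge-path of length $n$ in $\NCo(S)$, giving $d_{\NCo}(\alpha,\beta)\le n$. Together with the preceding paragraph this shows $\iota$ is an isometric embedding onto a $1$-dense subgraph, hence a $(1,2)$-quasi-isometry, and Proposition \ref{prop_folklore} yields $\partial\NCo(S)\simeq\partial\NC^\dagger(S)$.

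The hard part is exactly this simultaneous control of the perturbations: applying Lemma \ref{lem_pert} one vertex at a time is worthless, since two neighbours perturbed independently need not remain disjoint, and the endpoints $\alpha,\beta$ must be left fixed. Running every local perturbation inside tubes of one common radius $\varepsilon/3$ resolves this, at the price of (a) invoking the finiteness of the geodesic to know $\varepsilon>0$, and (b) verifying that the perturbation in Lemma \ref{lem_pert} can truly be confined to an arbitrarily thin neighbourhood of the curve being modified --- the only mildly delicate case being when $\nu_i$ coincides with $o$ along a subarc, where the two are nonetheless separated by an arbitrarily small isotopy. If one wished to avoid tracking the radius altogether, an equivalent route is to build the $\mu_i$ inductively from $\mu_0=\alpha$ while maintaining the invariant that $\mu_i$ is disjoint from $\nu_{i+1}$, an invariant that closes up at $i=n-1$ precisely because $\nu_n=\beta$ is transverse to $o$.
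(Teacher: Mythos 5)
Your proof is correct, but it takes a genuinely different route to the key inequality than the paper does. The paper never perturbs a whole geodesic by hand: given $\alpha,\beta\in\NCo(S)$, it first replaces $\beta$ by an adjacent curve $\beta'$ transverse to both $\alpha$ and $o$ (Lemma \ref{lem_pert}), then invokes the Bowden--Hensel--Webb approximation (Proposition \ref{prop_NC_approx}) to get an $\NC^\dagger$-geodesic from $\alpha$ to $\beta'$ avoiding a finite set $P$ and projecting to a geodesic in $\NC(S-P)$, and finally uses Proposition \ref{prop_metric} (a hyperbolic metric on $S-P$ making $o$ geodesic, so that minimal position forces transversality) to assume all vertices lie in $\NCo(S)$; the detour through $\beta'$ is exactly where the additive constant $2$ comes from. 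You instead work directly in $\NC^\dagger(S)$: take any geodesic $\nu_0,\dots,\nu_n$, use that consecutive vertices are disjoint compact sets to extract a uniform $\varepsilon>0$, and make all interior vertices transverse to $o$ by simultaneous $C^0$-small perturbations confined to $(\varepsilon/3)$-tubes, so consecutive curves stay disjoint and the endpoints are untouched. This is more elementary (no curve-graph approximation, no choice of hyperbolic metric) and yields the sharper conclusion that the inclusion is an isometric embedding with $1$-dense image, which of course implies the stated $(1,2)$-quasi-isometry; the only load-bearing point is that the transversality perturbation can be made arbitrarily small and supported near the non-transverse locus, which is standard and is the same fact the paper records in its remark after Lemma \ref{lem_pert} (you rightly flag the degenerate case where $\nu_i$ shares arcs with, or equals, $o$). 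What the paper's heavier route buys is not needed for this proposition itself, but it sets up the minimal-position-in-$S-P$ template that is reused verbatim in the later sphere-connectivity lemmas, whereas your argument would have to be supplemented there; conversely, your route avoids the slightly delicate ``without loss of generality'' step in the paper where the geodesic's vertices are re-chosen in minimal position with $o$ while preserving consecutive disjointness. The boundary statement then follows for you, as for the paper, from Proposition \ref{prop_folklore}, after noting that $\NCo(S)$ is a connected graph, hence geodesic, and hyperbolic by quasi-isometry invariance.
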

\begin{proof}
We first show that $\NCo(S)$ is $1$-dense in $\NC^\dagger(S)$. Consider any $\alpha\in\NC^\dagger(S)$. If $\alpha$ is not transverse to $o$, then, by Lemma \ref{lem_pert}, there exists another curve $\alpha^{\prime} \in \mathcal{NC}^{\dagger}(S)$ that is disjoint from $\alpha$ and is transverse to $o$. Consequently, we have
$$
d^{\dagger}\left(\alpha, \alpha^{\prime}\right) = d^{\dagger}\left(\alpha,\NCo(S)\right) = 1.
$$
This shows that the inclusion map is essentially surjective. 

If $\alpha,\beta\in\NCo(S)$ are transverse, then by taking a finite subset $P\subset S$ such that $o,\alpha,\beta$ are pairwise in minimal position on $S-P$, Proposition \ref{prop_NC_approx} indicates that
$$d^\dagger_o(\alpha,\beta)=d^\dagger(\alpha,\beta)=d_{\mathcal{N} \mathcal{C}(S-P)}\left([\alpha]_{S-P}, [\beta]_{S-P}\right).$$
Now for an arbitrary pair $\alpha,\beta\in\NCo(S)$, we can perturb them to a transverse pair, which will cause an additive error at most $2$. This implies that the inclusion map is a $(1,2)$-quasi-isometry. As a result, their Gromov boundaries are homeomorphic (see Proposition \ref{prop_folklore}).
\end{proof}

Finally, we need to point out the following fact about the comparison between two distances $d^\dagger$ and $d^\dagger_o$:
\begin{lemma}\label{radial_dist_pres}
    For any $\alpha \in \NCo(S)$, we always have $d_{o}^{\dagger}(o, \alpha) = d^{\dagger}(o, \alpha)$.
\end{lemma}
\begin{proof}
Indeed, for any $\alpha \in \NCo(S)$, it is clear that the subgraph distance is always bounded from above by the induced distance, {\it i.e.} $d^{\dagger}(\alpha, o) \leq d_o^{\dagger}(\alpha, o)$. Conversely, because $\alpha$ and $o$ are transverse, by Proposition \ref{prop_NC_approx}, there exists a finite subset $P \subset S$ disjoint from $o \cup \alpha $ such that $o$ and $\alpha$ are in minimal position in $S-P$. Moreover, we have
$$d_{\NC(S-P )}([o]_{S-P}, [\alpha]_{S-P} ) =d^\dagger(o,\alpha).$$
Let $[o]_{S-P}=[\nu_0]_{S-P},\ [\nu_1]_{S-P},\ \dots,\ [\nu_n]_{S-P}=[\alpha]_{S-P}$ be a geodesic in $\NC(S-P)$. By Proposition \ref{prop_metric}, we can choose a hyperbolic metric on $S-P$ so that $o$ and $\alpha$ are simultaneously geodesic. Let $\nu_i$ be the geodesic representative of $[\nu_i]_{S_P}$. Now we can find a path
$$
o = \nu_0,\nu_1, \cdots, \nu_n = \alpha,
$$
connecting $o$ to $\alpha$ in $\NCo(S)$. This implies that $d_{o}^{\dagger}(o,\alpha) \leq n = d^\dagger(o,\alpha)$. This proves the desired result.
\end{proof}

\begin{remark}\label{rem_radii_non_sep}
Using the $d_{\mathcal{C}^s(S-P)}$-distance estimation for curve in minimal position instead of $d_{\mathcal{NC}(S-P)}$, similar arguments in Lemma \ref{radial_dist_pres} also apply to non-separating curves, {\it i.e.} for any curve $\alpha\in \mathcal{C}^\dagger(S)$ transverse to $o$ with $d^\dagger(o,\alpha)=r$, we can find a geodesic path of length $r$ in $\mathcal{C}^\dagger(S)$ connecting $o$ to $\alpha$ such that all vertices on the path are curves transverse to $o$.
\end{remark}
\section{Bounded geodesic image theorem}\label{sec-4}

\subsection{Subsurface projection.}\label{subsec_s_p} Let $S$ be an orientable, connected closed surface of genus $g \geq 2$. Let $Y$ be a compact, proper, {\it non-sporadic} subsurface of $S$. Assume that $\partial Y$ is a finite collection of smooth curves in $\mathcal{C}^\dagger(S)$.

Let $\alpha \in \mathcal{C}^\dagger(S)$ be a simple closed curve on $S$, and let $Y$ be a subsurface as defined above. We say that $\alpha$ intersects $Y$ {\it essentially} if at least one connected component of $\alpha \cap Y$ is an essential arc or curve in $Y$, {\it i.e.}, not isotopically trivial in $Y$. We remark that a general curve $\alpha$ may not intersect $\partial Y$ transversely and this notion of essential intersection may not be invariant under isotopy.

If $\alpha$ intersects $Y$ essentially, then fix an essential arc $\tau\subset \alpha\cap Y$ and denote by $c_\pm\subset \partial Y$ the two boundary components where the endpoints of $\tau$ are located. Note that $c_+$ and $c_-$ might coincide. Let $\mathcal{N}$ be a regular neighbourhood of $\tau\cup c_\pm$ in $Y$ with a non-trivial smooth boundary, {\it i.e.} a boundary that is an essential curve in $Y$.

\begin{figure}[!ht]
    \centering
    \includegraphics[width=0.6\linewidth]{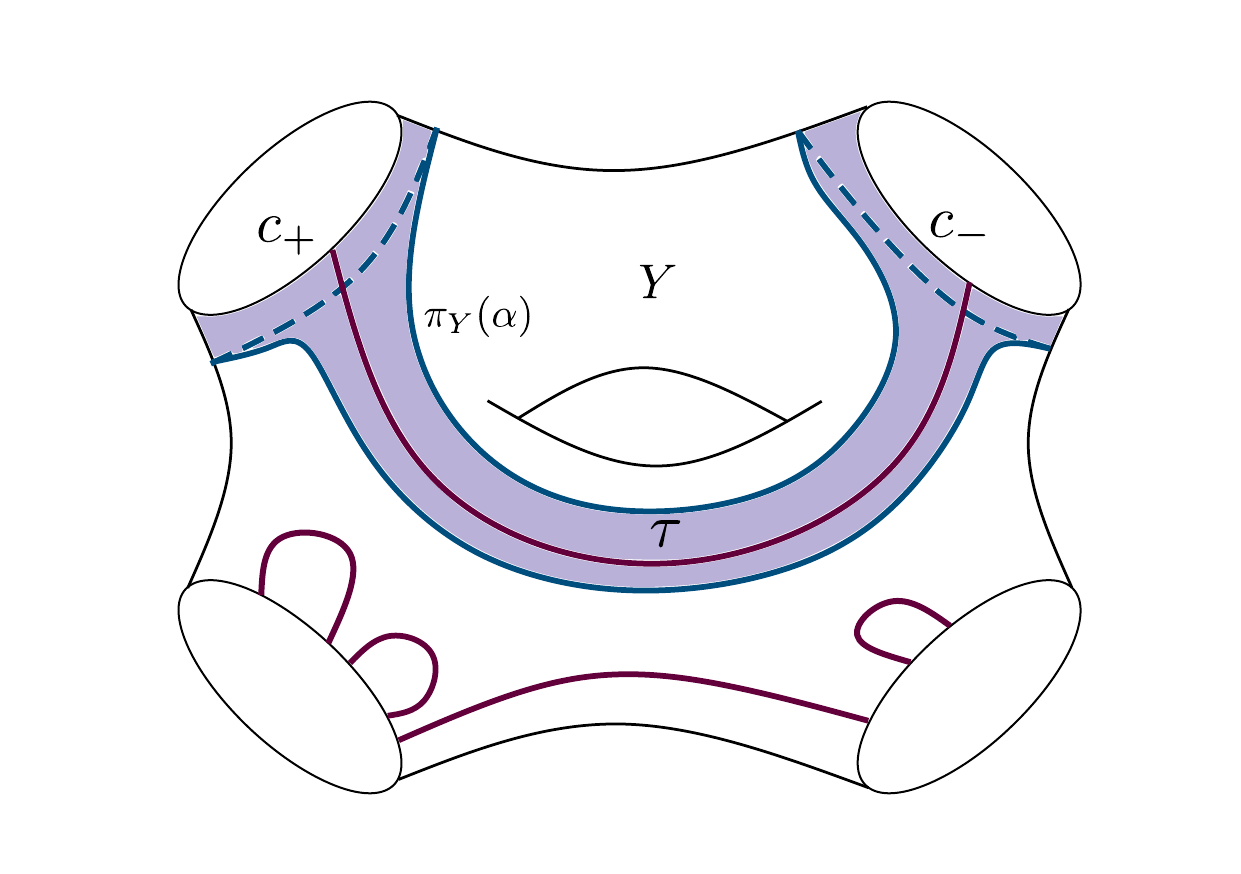}
    \caption{A possible subsurface projection $\pi_Y(\alpha)$ of curve $\alpha$ on subsurface $Y$.}
    \label{fig 1}
\end{figure}

\begin{definition}[Subsurface projection]\label{def_subsur_proj}
Suppose that $S$ and $Y$ are given as above. Let $\mathcal{C}^\dagger(Y)$ be the fine curve graph for $Y$ and let $\mathcal{P}\left(\mathcal{C}^\dagger(Y)\right)$ be its power set. We define a map $\pi_Y\colon \mathcal{C}^\dagger(S)\to\mathcal{P}\left(\mathcal{C}^\dagger(Y)\right)$ in the following way: for each $\alpha\in \mathcal{C}^\dagger(S)$, the image $\pi_Y(\alpha)$ is defined as
\begin{itemize}[topsep=0pt, itemsep=-1ex, partopsep=1ex, parsep=1ex ]
    \item $\{\alpha\}$ if $\alpha\subset Y$;
    \item $\emptyset$ if $\alpha$ does not intersect $Y$ essentially;
    \item the union of all $\partial \mathcal{N}\cap Y$ if $a\not\subset Y$ but $\alpha$ intersects $Y$ essentially, where $\mathcal{N}$ is a regular neighbourhood as above.
\end{itemize}
The map $\pi_Y$ is called the {\it subsurface projection} of $\alpha$ on $Y$.
\end{definition}

Nevertheless, this map is coarsely well-defined:

\begin{proposition}\label{prop_ssurface_proj_well_defined}
With the simplicial metric $\big(\mathcal{C}^\dagger(Y),d^\dagger_Y\big)$, for any curve $\alpha \in \mathcal{C}^\dagger(S)$, the diameter of the set $\pi_Y(\alpha)$ is bounded in $\mathcal{C}^\dagger(Y)$, {\it i.e.}, 
$$\diam_{d^\dagger_Y}(\pi_Y(\alpha)) \leq 11.$$
\end{proposition}

\begin{proof}
It suffices to prove the case where $\alpha$ intersects $Y$ essentially but is not contained in $Y$, as other situations are trivial. Let $\tau_1$ and $\tau_2$ be two essential arcs in $\alpha \cap Y$. We note that $\tau_1$ and $\tau_2$ are either identical or disjoint. Denote by $c^1_\pm$ and $c^2_\pm$ the boundary components of $Y$ where the starting and ending points of $\tau_1$ and $\tau_2$ are located, respectively. For $i = 1, 2$, let $\gamma_i$ be a boundary component of a regular neighbourhood $\mathcal{N}_i$ of $\tau_i \cup c^i_{\pm}$.

We can choose two smaller regular neighbourhoods $\widehat{\mathcal{N}_i}$ of $\tau_i \cup c^i_\pm$ for $i = 1, 2$, with smooth boundary components within $Y$, such that $\widehat{\mathcal{N}_i} \subset \mathcal{N}_i$ and $\partial \widehat{\mathcal{N}_i}$ and $\partial \widehat{\mathcal{N}_j}$ are in minimal position for $i \neq j$. Let $\widehat{\gamma_i}$ denote a boundary component of $\partial \widehat{\mathcal{N}_i}$. Due to disjointness, we have $d_{Y}^{\dagger}(\widehat{\gamma_i}, \gamma_i) = 1$ for $i = 1, 2$. 

Observe that $|\widehat{\gamma_1} \cap \widehat{\gamma_2}| \leq 4$. Since $\widehat{\gamma_1}$ and $\widehat{\gamma_2}$ are in minimal position within $Y$, we apply Proposition \ref{lem_upper_dist} to obtain
$$
d_{Y}^{\dagger}(\gamma_1, \gamma_2) \leq d_{Y}^{\dagger}(\gamma_1, \widehat{\gamma_1}) + d_Y^\dagger(\widehat{\gamma_1}, \widehat{\gamma_2}) + d_{Y}^{\dagger}(\gamma_2, \widehat{\gamma_2}) \leq 11.
$$
This completes the proof.
\end{proof}

In addition to Proposition \ref{prop_ssurface_proj_well_defined}, we have several other distance estimates for curves that intersect the subsurface essentially and their corresponding subsurface projections.

\begin{proposition}\label{prop_disjoint_subsurface_proj}
    Let $S$, $Y$, $d^\dagger_Y$, and $\pi_Y$ be as defined above. For any two curves $\alpha, \beta \in \mathcal{C}^\dagger(S)$ that intersect the subsurface $Y$ essentially with $|\alpha \cap \beta| = p \geq 0$, and for any $\gamma_\alpha \in \pi_Y(\alpha)$ and $\gamma_\beta \in \pi_Y(\beta)$, the following inequality holds:
    $$d_Y^\dagger\big(\gamma_\alpha, \gamma_\beta\big) \leq 8p + 11.$$
\end{proposition}

\begin{proof}
By choosing the regular neighbourhoods in Definition \ref{def_subsur_proj} sufficiently small, we can find curves $\widehat{\gamma_\alpha} \in \pi_Y(\alpha)$ and $\widehat{\gamma_\beta} \in \pi_Y(\beta)$ such that they are transverse within $Y$ and satisfy $|\widehat{\gamma_\alpha} \cap \widehat{\gamma_\beta}| \leq 4 + 4p$. Additionally, we ensure that $\gamma_\alpha \cap \widehat{\gamma_\alpha} = \emptyset$ and $\gamma_\beta \cap \widehat{\gamma_\beta} = \emptyset$, which implies $d_Y^\dagger(\gamma_\alpha, \widehat{\gamma_\alpha}) = d_Y^\dagger(\gamma_\beta, \widehat{\gamma_\beta}) = 1$. By applying Proposition \ref{lem_upper_dist}, we obtain $d_Y^\dagger\big(\widehat{\gamma_\alpha}, \widehat{\gamma_\beta}\big) \leq 8p + 9$. Then, using the triangle inequality, we conclude that $d_Y^\dagger\big(\gamma_\alpha, \gamma_\beta\big) \leq 8p + 11$.
\end{proof}

Using the triangle inequality and applying Proposition \ref{prop_disjoint_subsurface_proj}, we can further deduce the following corollary:
\begin{corollary}
    Let $P = \left\{\alpha_0, \alpha_1, \ldots, \alpha_n\right\}$ be a sequence in $\mathcal{C}^{\dagger}(S)$ and $p>0$ be a positive integer. Suppose that every $\alpha_i$ intersects $Y$ essentially and that $|\alpha_i\cap \alpha_{i+1}|\leq p$, then $d^{\dagger}_Y\left(\pi_Y(\alpha_0), \pi_Y(\alpha_n)\right) \leq (8p+11) n$.
\end{corollary}

\subsection{Surgery.}\label{subsec_surgery} To prove Theorem \ref{thm_bounded_geodesic_image}, we will employ the auxiliary process of subsurface surgery, similar to the classical cases. The idea is to adapt the proof from \cite{webb2013shortproofboundedgeodesic} to the case of the fine curve graph.

First, let us introduce the following notion:

\begin{definition}[Sensible collection]
    Let $S$ be a hyperbolic surface, possibly with punctures. We say that a collection of essential multicurves $\left(\gamma_i\right)\subset \mathcal{C}^\dagger(S)$ is {\it sensible} if the curves are essential and have no triple points, {\it i.e.} for distinct indices $i, j, k$, we have $\gamma_i \cap \gamma_j \cap \gamma_k = \emptyset$. 
\end{definition}

Throughout this section, we will assume that $\alpha, \beta \in \mathcal{C}^\dagger(S)$ are two curves in minimal position that fill $S$, {\it i.e.} cut $S$ into a disjoint union of discs.

Recall that when we orient $\gamma$ and $\beta$, each point in $\gamma \cap \beta$ is assigned an intersection sign of $\pm 1$. We say that a pair of such points have opposite signs if their signs differ, and the same sign otherwise. Note that this notion does not depend on the choice of orientation.

\begin{definition}[Loop]\label{defn_loop}
    Let $S$ be a surface and $\alpha,\beta$ be as above. We say that $\gamma$ is an {\it $(\alpha, \beta)$-loop} if for each subarc $ b \subset \beta-\alpha$, we have $|\gamma \cap b| \leq 2$, with equality only if $\gamma \cap b$ have opposite sign.
\end{definition}

In the following paragraphs, we will assume only that $\alpha$ is a multicurve, while $\beta$ and $\gamma$ are essential curves on $S$. Given a sensible triplet $\{\gamma, \alpha, \beta\}$, we aim to construct a new curve $\chi(\gamma)$ that forms an $(\alpha, \beta)$-loop if $\gamma$ does not already do so.

Suppose now that $\{\gamma, \alpha, \beta\}$ is a sensible triplet and that $\gamma$ is not an $(\alpha, \beta)$-loop. Let $c$ be a minimal connected subarc of $\gamma$, with respect to inclusion, such that there exists an arc $b \subset \beta - \alpha$ satisfying one of the following conditions:
\begin{itemize}[topsep=0pt, itemsep=-1ex, partopsep=1ex, parsep=1ex]
    \item $c \cap b$ consists of a pair of points with the same sign, or
    \item $|c \cap b| \geq 3$.
\end{itemize}
Since $c$ is minimal and connected, this implies that $c$ has endpoints on $b$, and that $b$ is the unique arc satisfying the above properties, with the additional condition that $|c \cap b| \leq 3$.

Now the intersection between $b$ and $c$ falls into the following three cases, with the same surgery as in \cite{webb2013shortproofboundedgeodesic} recorded here below. Let $\mathcal{N}$ be a small enough regular neighbourhood of $\beta$ and $\mathcal{R}\subset \mathcal{N}-\alpha$ a rectangle containing $b$. 
\vskip 0.3cm
\begin{enumerate}[label=(K\arabic*),topsep=0pt, itemsep=-1ex, partopsep=1ex, parsep=1ex]
    \item $|c\cap b|=2$ with same sign. Then let $\{p_1,p_2\}=c\cap \partial\mathcal{R}$. Connect $p_1$ to $p_2$ (or {\it vice versa}) by an arc $a \subset \mathcal{R}$ that intersects $b$ only once and intersects $c$ only at the endpoints of $a$. We let $\chi(\gamma)$ be the simple closed curve $a \cup(c-\mathcal{R})$. See Figure \ref{fig 2}\label{K1}
    \item $|c \cap b|=3$ with alternating intersection signs with respect to some order on $b$. Let $p_1, p_2, p_3$ be the three points $c \cap b$ in some order along $b$. Let $c_1, c_2 \subset c$ be arcs such that $c_1 \cup c_2=c$, $\partial c_1=\left\{p_1, p_2\right\}$ and $\partial c_2=\left\{p_2, p_3\right\}$. Connect $c_1 \cap \partial R$ to $c_2 \cap \partial \mathcal{R}$ by two disjoint arcs $a_1, a_2 \subset \mathcal{R}$ so that $a_1$ intersects $c_1, c_2$ only at its endpoints and intersects $b$ once, and similarly $a_2$. We let $\chi(\gamma)=a_1 \cup\left(c_1-\mathcal{R}\right) \cup a_2 \cup\left(c_2-\mathcal{R}\right)$.\label{K2}
    \item $|c \cap b|=3$ with non-alternating intersection signs. Let $p_1, p_2, p_3$ be the three points $c \cap b$ in some order along $b$. Without loss of generality, assume that $p_1$ and $p_2$ have the same sign. Then connect $p_1$ to $p_2$ with an arc $a$ similarly to \ref{K1} so that the sign at the unique intersection point between $a$ and $b$ is different from the intersection sign of $p_3$.\label{K3}
\end{enumerate}
We denote by $\chi(\gamma)$ this curve newly constructed from $\gamma$ according to the aforementioned procedures.

\begin{figure}[!ht]
    \centering
    \begin{multicols}{2}
    \includegraphics[width=1\linewidth]{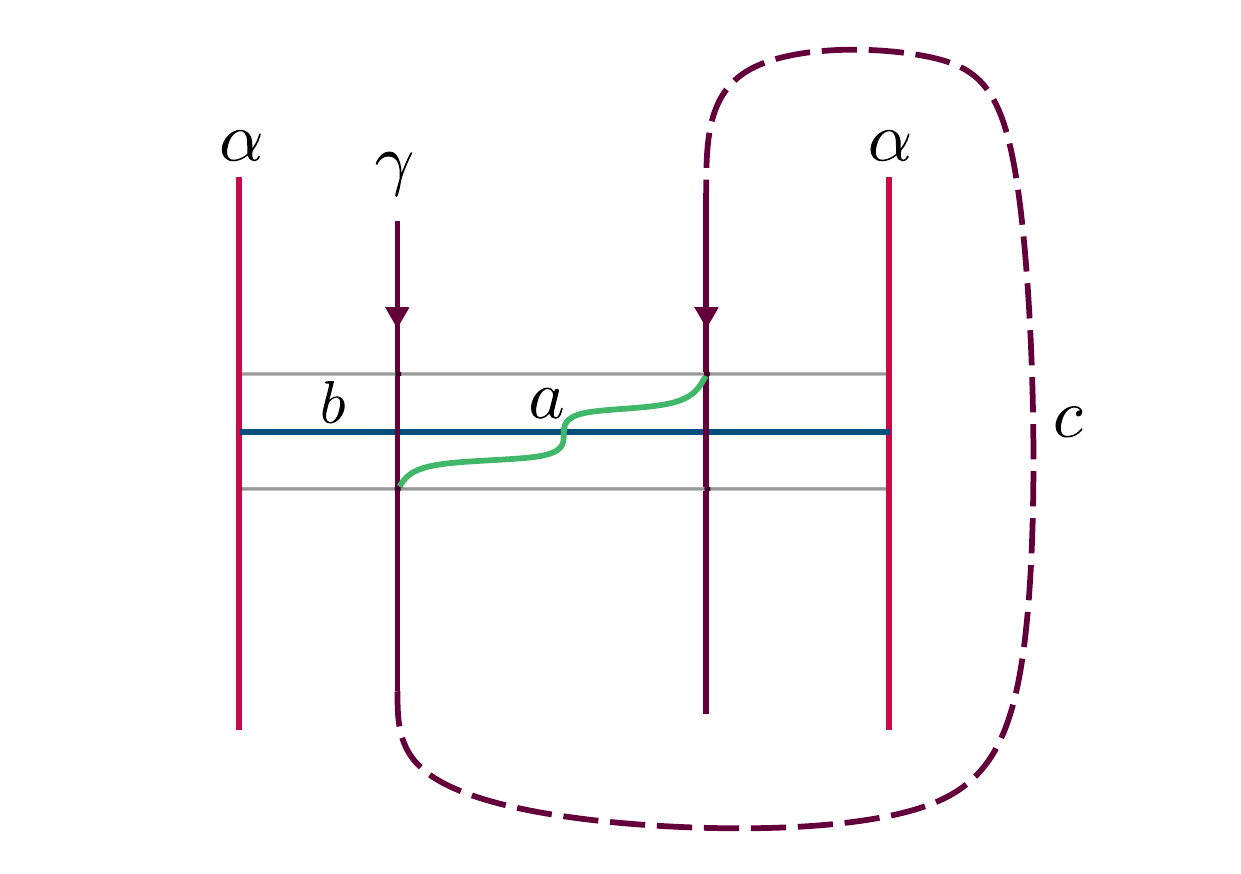}\par
    \includegraphics[width=1\linewidth]{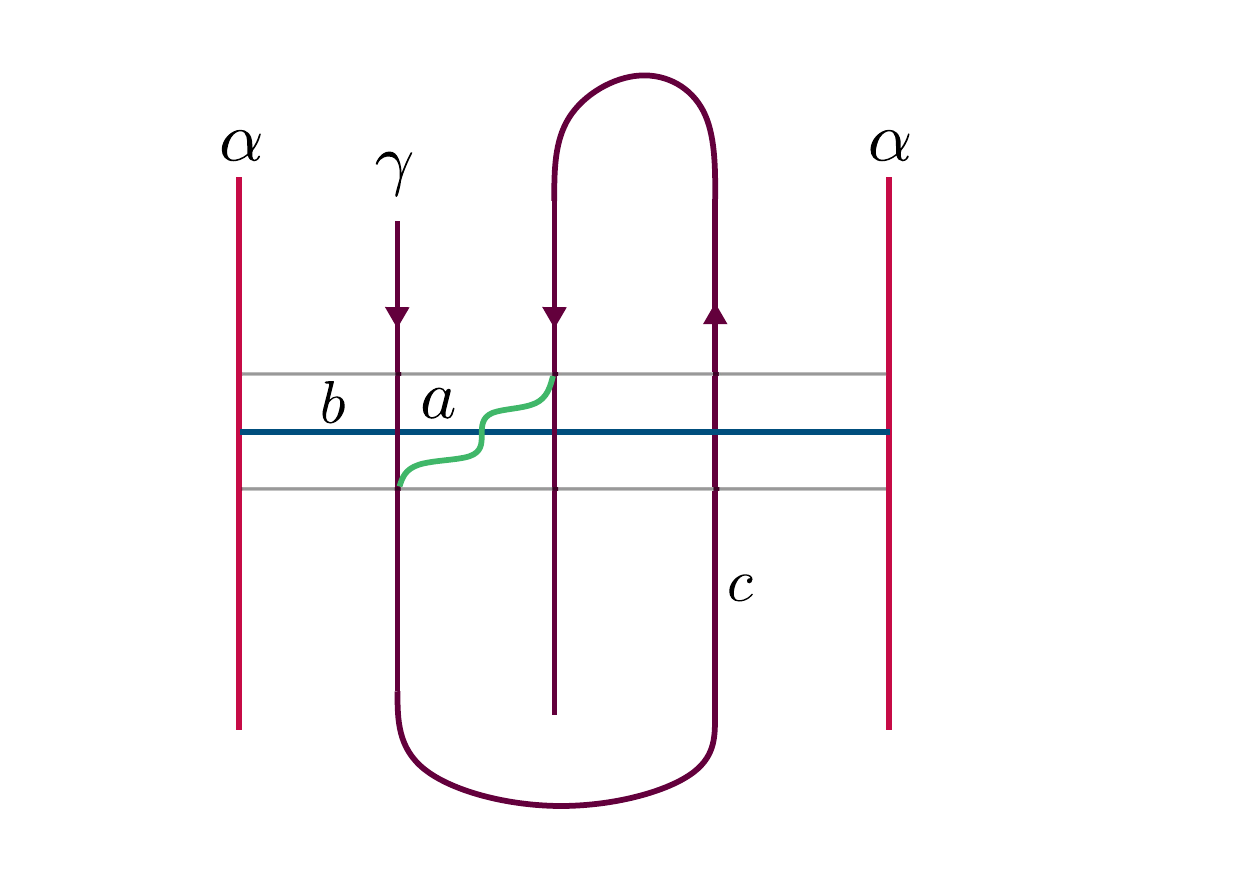}
    \end{multicols}
    \caption{Surgery for \ref{K1} (left) and \ref{K3} (right).}
    \label{fig 2}
\end{figure}

Now let $S$ be again an oriented closed surface with genus $g(S)\geq 2$. Let $\beta\in\mathcal{C}^\dagger(S)$ be a curve and $\alpha$ be a multicurve that is transverse to $\beta$. Suppose that $P\subset S$ is a finite collection of points on the surface such that $\alpha,\beta$ are in minimal position on $S-P$. Moreover, we assume that $p \in P$ if and only if $p$ is contained in a bigon between $\alpha$ and $\beta$ and that each bigon contains only one point in $P$.

\begin{lemma}\label{lem_surgery_surviving}
Let $S$, $\alpha$, $\beta$ and $P$ be as above. Suppose that $\gamma\in \mathcal{C}^\dagger(S)$ is a curve such that the triplet $(\gamma,\alpha,\beta)$ is sensible on $S-P$. Then we can find $\chi(\gamma)\in \mathcal{C}^\dagger(S-P)$ is an $(\alpha,\beta)$-loop and is surviving, {\it i.e.} $\chi(\gamma)\in \mathcal{C}^\dagger(S)$. 
\end{lemma}
\begin{proof}
The same proof in \cite[Lemma 2.2]{webb2013shortproofboundedgeodesic} also applies here, resulting that $\chi(\gamma)$ is an $(\alpha,\beta)$-loop and remains essential on $S-P$. To show that $\chi(\gamma)$ is surviving, it suffices to show that $\chi(\gamma)$ can be taken so that they do not only bound a disk in $S$ containing some points in $P$. 

For the cases \ref{K1} and \ref{K3}, by proof of contradiction, we assume that $\chi(\gamma)$ is not surviving. Then $\chi(\gamma)$ encloses a punctured disk $\Delta$. There are two cases: $\gamma_1 \subset \Delta$ or $\gamma_2 \subset \Delta$. In either case, the extension of $\gamma_i$ would intersect with $\partial \Delta$. This contradicts the fact that $\gamma$ is a simple closed curve.

For the case \ref{K2}, assume also {\it ab absurdo} that $\chi(\gamma)$ is not surviving. Observe that $\chi(\gamma)$ has to be has to be separating, since a non-separating curve cannot cut an oriented surface into a disk. This means that the two subsurfaces \(B_\pm\) bounded \(c_1, c_2\) and \(b\) are two disks on \(S\), see Figure \ref{fig 4}. Moreover, by non-surviving condition, it is implied that $B_-$ and $B_+$ both contains points in $P$. By our assumption on $P$, this further indicates that both $B_-$ and $B_+$ are contained in a bigon between $\beta$ and $\alpha$. As a result, it forces a component of $\alpha$ to bound a two bigons and become non-essential on $S$, which yields a contradiction.
\end{proof}

\begin{figure}[!ht]
    \centering
    \includegraphics[width=0.6\linewidth]{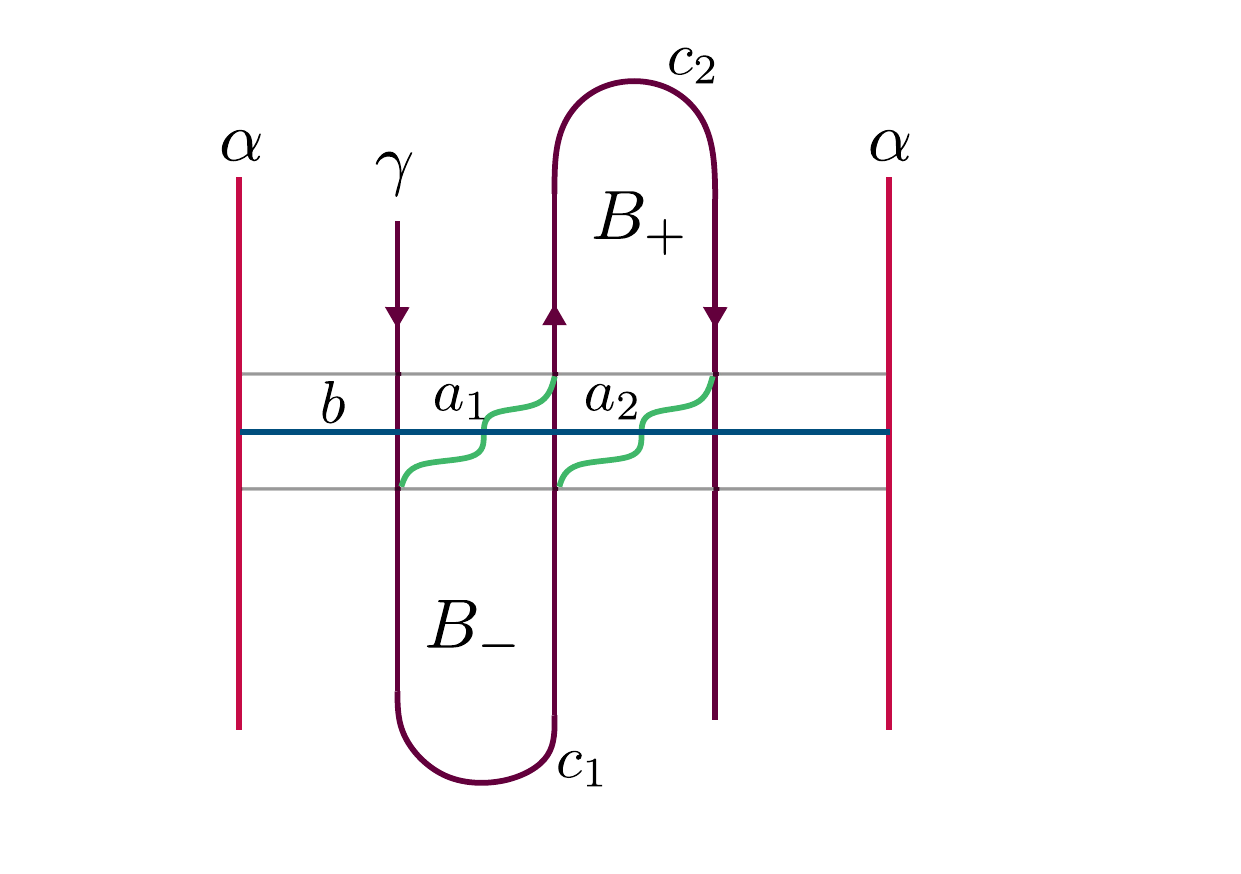}
    \caption{Surgery for \ref{K2} and its proof of Lemma \ref{lem_surgery_surviving}.}
    \label{fig 4}
\end{figure}

\subsection{Proof of bounded geodesic image theorem.} Since the surgery is now well-defined in the context of the fine curve graph, to prove the bounded geodesic image theorem for the fine curve graph, it suffices to proceed through the lemmata below, following the arguments in \cite{webb2013shortproofboundedgeodesic}.

We note that the surgery for \(\gamma\) is performed exactly as in \cite{webb2013shortproofboundedgeodesic}. Hence we may also conclude the following generalised result from \cite[Proposition 3.1.7]{hensel2013slimunicornsuniformhyperbolicity}:
\begin{lemma}[cf. Lemma 2.3, \cite{webb2013shortproofboundedgeodesic}]\label{lem_2.3}
    Let $S$, $P$, $\alpha$ and $\beta$ be as above. Let $\gamma_1,\gamma_2\in\mathcal{C}^\dagger(S)$ be two curves such that $\gamma_1, \gamma_2, \alpha,\beta$ are sensible on $S-P$. Assume in addition that $\gamma_1$ is disjoint from $\gamma_2$. Then for any $\chi(\gamma)$ and $\chi(\beta)$ constructed above, we have then $|\chi(\gamma_1)\cap\chi(\gamma_2)|\leq 4$.
\end{lemma}

The above result will allow us to deduce the following distance estimation between the subsurface projection of $(\alpha, \beta)$-loops obtained from two disjoint curves from Proposition \ref{lem_upper_dist}:
\begin{lemma}
Let $S$, $P$, $\alpha$, $\beta$, $\gamma_1$ and $\gamma_2$ be as above. Then $d^\dagger\big(\chi(\gamma_1), \chi(\gamma_2)\big)\leq 9$.
\end{lemma}

With results provided above, following the arguments of \cite[Lemma 2.4]{webb2013shortproofboundedgeodesic}, we can obtain the following result:
\begin{lemma}\label{lem_2.4}
    Let $\alpha^{\prime}$ be a component of a multicurve $\alpha$ on $S$ and let $\beta$ be a curve on $S$. Suppose $\alpha^{\prime}, \beta$ are sensible and fill $S$. Then there exists a $(9,0)$-quasigeodesic $\alpha^{\prime}=\gamma_0, \gamma_1, \ldots, \gamma_n=$ $\beta$ with $\gamma_i$ is a $(\alpha, \beta)$-loop for every $0<i<n$.
\end{lemma}

Moreover, we have the following result:
\begin{lemma}\label{lem_2.5}
      Let $Y$ be a non-annular subsurface of $S$ and $\beta\in \mathcal{C}^\dagger(S)$. Suppose in addition that a component $a\subset\partial Y$ and $\beta$ fill $S$. Let $\gamma\in\mathcal{C}^\dagger(S)$ be a $(\partial Y, \beta)$-loop that intersects $\partial Y$. Then for any $\widehat{\gamma}\in \pi_Y(\gamma)$ and $\widehat{\beta}\in \pi_Y(\beta)$, we have $d_Y(\widehat{\gamma}, \widehat{\beta}) \leq 24$.
\end{lemma}
\begin{proof}
    Since $Y$ is non-annular subsurface, any pair of arcs in the respective subsurface projection will intersect at most twice by Definition \ref{defn_loop}, so one can consider a closed regular neighbourhood of the arcs to prove that there are some $\gamma'\in \pi_{Y}(\gamma)$ and $\beta'\in \pi_{Y}(\beta)$ satisfying $d_Y(\gamma',\beta')\leq 2$. Applying triangle inequality and Proposition \ref{prop_ssurface_proj_well_defined}, we can conclude the result.
\end{proof}

With the lemmata above, we can now prove Theorem \ref{thm_bounded_geodesic_image}.
\begin{proof}[Proof of Theorem \ref{thm_bounded_geodesic_image}]
Since we have finished all the preparation using surgery for the proof of bounded geodesic image theorem. Note that the fine curve graph is also Gromov hyperbolic, the argument from \cite{webb2013shortproofboundedgeodesic} also applies here to conclude Theorem \ref{thm_bounded_geodesic_image}.
\end{proof}

\subsection{A surviving version.} Assume that $S$ is a surface as given above and $P\subset S$ is a finite collection of point on $S$. Let $Y\subset S-P$ be a proper subsurface that is not sporadic. We consider the subsurface projection $\widetilde{\pi_Y}\colon \mathcal{C}^s(S-P)\to \mathcal{P}(\mathcal{C}(Y))$ given as in the classical cases, {\it i.e.} the image of an (isotopy class of a) surviving curve is the isotopy class of curves defined as in \S\ref{subsec_s_p}. 

We observe that the only arguments in the proof of the bounded geodesic image theorem that depend on the topological type of the curves are the surgeries performed in Proposition \ref{lem_upper_dist} and in \S\ref{subsec_surgery}. As shown in Proposition \ref{lem_upper_dist} and Lemma \ref{lem_surgery_surviving}, the new curves obtained from the surgeries remain surviving on $S-P$. Recall that the surviving curve graph $\mathcal{C}^s(S-P)$ is also Gromov hyperbolic (see \cite{rasmussen2020uniform} and \cite[theorem 3.3]{bowden2022quasi}). Following the approach in \cite{webb2013shortproofboundedgeodesic}, we conclude:
\begin{theorem}\label{thm_surviving_bounded_geodesic_image}
Let $S$ be a closed surface and $P\subset S$ be a finite collection of point. Then there exists an $M\geq 0$ such that for any non-sporadic proper subsurface $Y\subset S-P$ and any geodesic $(g_i)\subset \mathcal{C}^s(S-P)$ where each $g_i$ intersects $Y$ essentially, we have $\diam_{d_Y}\big(\widehat{\pi_Y}(g_i)\big)\leq M$.
\end{theorem}

\begin{remark}
As both $\pi_Y$ and $\widehat{\pi_Y}$ are coarsely well-defined for a curve $\alpha$ intersecting $Y$ essentially, we will simply write $\pi_Y(\alpha)$ and $\widehat{\pi_Y}(\alpha)$ for an (arbitrary) element in the projection.
\end{remark}

\subsection{Application: non-compactness.} As an interesting application of bounded geodesic image theorem, we can show that the Gromov boundary and the Gromov bordification of the fine curve graph are not compact.
\begin{proof}[Proof of Theorem \ref{thm_cpt}]
Let $o\in \NC^\dagger(S)$ and let $\xi\in \partial\mathcal{C}^\dagger(S)$ be a point on the Gromov boundary. Let $Y=S-o$ be the complementary component and $a\in \mathcal{C}^\dagger(S)$ that intersects $Y$ essentially and $\langle \xi, a\rangle_o\gg 1$. Take $\varphi\in \mathrm{Homeo}(S)$ be a homeomorphism on $S$ such that $\varphi|_Y$ is pseudo-Anosov on $Y$.

We claim that the sequence $\big(\varphi^n(\xi)\big)_{n\geq 1}$ does not have an accumulation point on $\partial \mathcal{C}^\dagger(S)$. Since $\partial \mathcal{C}^\dagger(S)$ is metrisable, suppose for contradiction that there is a convergent subsequence $\big(\varphi^{n_k}(\xi)\big)_{k\geq 1}$. By (\ref{product_est}), this implies that for any $R>0$, there exists a sufficiently large $k_0>0$ so that for an arbitrary $k\gg k_0$, we have $\langle \varphi^{n_{k_0}}(a),\varphi^{n_k}(a)\rangle_o> R$. We may assume that $\varphi^{n_{k_0}}(a),\varphi^{n_k}(a)$ are transverse to $o$ (as such a change causes an additive error at most $2$ in the Gromov product). Let $P\subset S$ such that $\varphi^{n_{k_0}}(a)$, $\varphi^{n_k}(a)$ and $o$ are pairwise in minimal position. Since $k\gg k_0$ and $\varphi$ is pseudo-Anosov on $Y$, we have then 
$$d_{Y}\big(\pi_Y\big(\varphi^{n_{k_0}}(a)\big),\big(\varphi^{n_{k}}(a)\big)\big)>M,$$
where $M>0$ is the constant from Theorem \ref{thm_surviving_bounded_geodesic_image}. Appealing to Theorem \ref{thm_surviving_bounded_geodesic_image}, this means that any geodesic connecting $[\varphi^{n_{k_0}}(a)]_{S-P}$ to $[\varphi^{n_{k}}(a)]_{S-P}$ must contain an isotopy class of curve that does not intersect $Y$ essentially, which has to be $[o]_{S-P}$. So by Lemma \ref{prop_NC_approx}, we can compute
\begin{align*}
\langle \varphi^{n_{k_0}}(a),\varphi^{n_{k}}(a)\rangle_o &= \frac{1}{2}\left(d^\dagger\big(\varphi^{n_{k_0}}(a),o\big)+d^\dagger\big(\varphi^{n_{k}}(a),o\big)-d^\dagger\big(\varphi^{n_{k_0}}(a),\varphi^{n_{k}}(a)\big)\right)\\
&=\frac{1}{2}\Big(d_{C^s(S-P)}\big([\varphi^{n_{k_0}}(a)]_{S-P},[o]_{S-P}\big)+d^\dagger\big([\varphi^{n_{k}}(a)]_{S-P},[o]_{S-P}\big)\\
&\qquad -d^\dagger\big([\varphi^{n_{k_0}}(a)]_{S-P},[\varphi^{n_{k}}(a)]_{S-P}\big)\Big)\\
&=0<R,
\end{align*}
contradiction! Hence $\partial\mathcal{C}^\dagger(S)$ is not sequentially compact. As it is a metrisable space, it is not compact.
\end{proof}

Note that the bordification of the fine curve graph is also completely metrisable (cf. \cite[Corollary 3.6.14]{das2017geometry}). So if it is compact, it will also be sequentially compact. But the sequence $\big(\varphi^n(a)\big)_{n\geq 1}$ as in the proof of Theorem \ref{thm_cpt} does not contain a subsequence converging in in $\mathrm{bord}\, \mathcal{C}^\dagger(S)$, otherwise the Gromov product between two elements from the subsequence has to converge in $(0,\infty]$, which is not the case. The existence of such a sequence implies again that the bordification is not compact:
\begin{corollary}
Let $S$ be an orientable connected closed surface with genus $g\geq 2$. Then the Gromov bordification $\mathrm{bord}\, \mathcal{C}^\dagger(S)$ is not compact.
\end{corollary}
\section{Connectivity properties}\label{sec-5}

\subsection{Closed surfaces of higher genus.} Throughout this subsection, we will assume that $S$ is a closed surface of genus $g(S)\geq 3$. In this subsection, we will focus on verifying the conditions in Proposition \ref{prop_wright} for the subgraph $\NCo(S)$.

For convenience, let $r>0$ be any positive integer, we denote by $\So(r)$ the sphere in $\NCo(S)$ centred at $o$ of radius $r$, {\it i.e.} 
$$
\So(r) = \{\alpha\in \NCo(S)\colon d_o^\dagger(\alpha,o)=r\}.
$$

The following lemma shows that there is no ``{\it dead ends}'' in the sense of \cite{birman2015curve} inside of the subgraph $\NCo(S)$, {\it i.e.} this subgraph satisfies the assumption \ref{G1} in Proposition \ref{prop_wright}.
\begin{lemma}\label{lem_G1}
Let $S$ be a closed hyperbolic surface. Let $\alpha\in \So(r)$ for some $r>0$. Then there exists a curve $\beta\in \So(r+1)$ such that $\beta\cap \alpha=\emptyset$.
\end{lemma}
\begin{proof}
Let $o$ and $\alpha$ be as above and let $P\subset S$ be a finite collection of point such that $o$ and $\alpha$ are in minimal position of $S-P$. By Proposition \ref{prop_metric}, we can pick a hyperbolic metric on $S-P$ such that $o$ and $\alpha$ are geodesic.

Consider the subsurface $Y$ of $S-P$ given by cutting along $\alpha$. Note that the subsurface projection $\widehat{\pi_Y}\colon \mathcal{C}^s(S-P)\to \mathcal{C}(Y)$ is unbounded, as the subset of non-separating curves on $Y$, {\it a fortiori} surviving on $S-P$, is unbounded. We can take a non-separating geodesic curve $\beta\subset Y$ with respect to the hyperbolic metric on $S-P$ such that $d_Y\big(\widehat{\pi_Y}(o),\widehat{\pi_Y}(\beta)\big)>M$, where $M$ is the constant from Theorem \ref{thm_surviving_bounded_geodesic_image}. Since $\beta$ is non-separating on $S$, it is surviving on $S-P$. Now Theorem \ref{thm_surviving_bounded_geodesic_image} implies that any geodesic in $\mathcal{C}^s(S-P)$ connecting $[o]_{S-P}$ to $[\beta]_{S-P}$ must contain an element that does not intersect $Y$ essentially. But as $\alpha$ is non-separating on $S$ (and {\it a fortiori} on $S-P$), the only curves that do not intersect $Y$ are those isotopic to $\alpha$. This implies that the geodesic in $\mathcal{C}^s(S-P)$ connecting $[o]_{S-P}$ to $[\beta]_{S-P}$ must pass through $[\alpha]_{S-P}$. As $\beta$ is disjoint from $\alpha$ by definition, we have
\begin{align}\label{eq_o,a,b}
    d_{\mathcal{C}^s(S-P)}([o]_{S-P},[\beta]_{S-P})=d_{\mathcal{C}^s(S-P)}([o]_{S-P},[\alpha]_{S-P})+1.
\end{align}
Because $o,\alpha,\beta$ are geodesic curves on $S-P$, they are pairwise in minimal position on $S-P$. Combining Proposition \ref{prop_NC_approx} and (\ref{eq_o,a,b}), we can conclude that $d^\dagger(o,\beta)=d^\dagger(o,\alpha)+1$ on $\mathcal{C}^\dagger(S)$. Since the inclusion map $\NCo(S)\hookrightarrow\mathcal{C}^\dagger(S)$ is an isometric embedding (appealing to Lemma \ref{radial_dist_pres}), we can conclude that $\beta\in \So(r+1)$.
\end{proof}

Now, we will treat in the rest of this subsection the condition \ref{G3} in Proposition \ref{prop_wright}. First, we will need the following auxiliary lemmata:

\begin{lemma}\label{lem_trans_all}
    Let $r > 0$ be any positive integer, and let $\alpha, \beta \in \So(r)$. If $\alpha$ and $\beta$ are not transverse, then there exists a $\beta^{\prime} \in \So(r)$ that is isotopic to $\beta$ yet disjoint from it, and which is also transverse to $\alpha$.
\end{lemma}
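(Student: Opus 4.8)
The plan is to produce $\beta'$ by an isotopic perturbation of $\beta$ in the spirit of Lemma \ref{lem_pert}, but now carried out while tracking the constraint $d^\dagger(\cdot,o)=r$. First I would apply Lemma \ref{lem_pert} with $\alpha$ in the role of ``$\alpha$'' and $F=\{\alpha, o\}$ (after first ensuring $\alpha$ and $o$ are transverse, which holds since $\alpha\in\So(r)\subset\NCo(S)$, and noting that if $\beta$ already happens to be transverse to $o$ we keep it, otherwise we use the conclusion of Proposition \ref{prop_incl} to replace $\beta$): this produces a curve $\beta'$ isotopic to $\beta$, disjoint from $\beta$, and transverse to both $\alpha$ and $o$. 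The only thing left to check is that $d^\dagger(\beta',o)=r$, i.e. that the perturbation did not change the distance to the base point.

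The key point is that the perturbation can be localized in an arbitrarily small annular neighbourhood of $\beta$, as emphasized in the remark after Lemma \ref{lem_pert}. I would argue the distance is preserved in two inequalities. For the upper bound $d^\dagger(\beta',o)\le r$: since $\beta'$ is isotopic to $\beta$ in $S$, and more importantly $\beta'$ can be taken disjoint from $\beta$, one can take a geodesic $o=\nu_0,\nu_1,\dots,\nu_{r}=\beta$ in $\NC^\dagger(S)$ and, by choosing the annular neighbourhood $A\ni\beta$ thin enough that $A$ is disjoint from $\nu_{r-1}$ (here I use that $\nu_{r-1}$ is disjoint from $\beta$, so a small enough annulus around $\beta$ misses it), conclude $\nu_{r-1}$ is disjoint from $\beta'$ as well; hence $d^\dagger(\beta',o)\le r$. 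For the lower bound $d^\dagger(\beta',o)\ge r$: since $\beta'$ is isotopic to $\beta$, one can equally run the argument in reverse — a geodesic from $o$ to $\beta'$ of length $<r$ together with the disjointness $\beta\cap\beta'=\emptyset$ (again using a thin annulus) would give a path from $o$ to $\beta$ of length $<r$, contradicting $\beta\in\So(r)$. The symmetric role of $\beta$ and $\beta'$ here — both disjoint from each other and isotopic — is what makes both inequalities go through simultaneously.

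The main obstacle, and the only subtlety worth spelling out, is the bookkeeping on the annulus: I must choose the annular neighbourhood of $\beta$ small enough to simultaneously (i) be disjoint from the finitely many curves $\nu_{r-1}$ (for a fixed choice of geodesic realizing $d^\dagger(\beta,o)$) and $\mu_{r-1}$ (for a fixed geodesic realizing a hypothetical short path from $\beta'$), (ii) allow the transversality-fixing isotopy of Lemma \ref{lem_pert} with respect to $\alpha$ and $o$, and (iii) keep $\beta'$ disjoint from $\beta$. Since all of these are finitely many closed conditions and $\beta$ is compact, such an annulus exists; I would phrase this as ``by taking $A$ sufficiently thin'' and cite the remark after Lemma \ref{lem_pert} rather than belabour it. Finally I note $\beta'$ is still essential and non-separating (it is isotopic to $\beta$, which lies in $\NC^\dagger(S)$), so $\beta'\in\So(r)$ as required, and it is transverse to $\alpha$, completing the proof.
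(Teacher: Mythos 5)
Your upper bound is fine, but the lower bound $d^\dagger(o,\beta')\geq r$ has a genuine gap, and it is exactly the point the lemma has to address. Your argument is circular: to rule out a geodesic $o=\mu_0,\dots,\mu_s=\beta'$ with $s<r$, you want the thin annulus $A$ around $\beta$ to be disjoint from the penultimate vertex $\mu_{s-1}$, but that geodesic exists only after $\beta'$ has been constructed, and $\beta'$ is constructed only after $A$ has been chosen; you cannot choose $A$ to avoid a curve defined in terms of $\beta'$ (your bookkeeping item (i) makes this circularity explicit by asking $A$ to miss ``$\mu_{r-1}$ for a hypothetical short path from $\beta'$''). Note also that the weaker, non-circular version of your reverse argument only yields $d^\dagger(o,\beta)\leq s+1\leq r$ (append $\beta$ to the short geodesic using $\beta\cap\beta'=\emptyset$), which is no contradiction with $\beta\in\So(r)$; the triangle inequality alone gives only $d^\dagger(o,\beta')\geq r-1$. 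Since fine-graph distance is not an isotopy invariant, ``$\beta'$ is a small isotopic pushoff of $\beta$'' does not by itself prevent the distance to $o$ from dropping, so some additional input is unavoidable here.

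The paper closes this gap with Proposition \ref{prop_NC_approx} rather than with geodesic surgery: choose a finite set $P\subset S$ so that $(o,\beta)$ (and $(o,\alpha)$) are in minimal position in $S-P$; then $d^\dagger(o,\beta)=d_{\NC(S-P)}([\beta]_{S-P},[o]_{S-P})=r$. Performing the perturbation of Lemma \ref{lem_pert} in a neighbourhood small enough to avoid $P$ and to create no bigons with $o$ in $S-P$, the curve $\beta'$ remains isotopic to $\beta$ in $S-P$ and in minimal position with $o$ there, so a second application of Proposition \ref{prop_NC_approx} gives $d^\dagger(o,\beta')=d_{\NC(S-P)}([\beta]_{S-P},[o]_{S-P})=r$, and both bounds come for free because the right-hand side depends only on the isotopy class in $S-P$. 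If you want to salvage your elementary approach instead, you would have to show that any short fine geodesic ending at $\beta'$ can be modified to end at $\beta$ without increasing its length, which amounts to reproving (a case of) the Bowden--Hensel--Webb approximation result; citing Proposition \ref{prop_NC_approx} is the intended route.
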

\begin{proof}
In light of Proposition \ref{prop_NC_approx}, we can find a finite subset $P\subset S$ so that $(o,\alpha)$ and $(o,\beta)$ are in minimal positions on $S-P$. By making a small perturbation as described in Lemma \ref{lem_pert}, we can obtain a curve $\beta'$ that is transverse to $\alpha$ and $o$, disjoint from $\beta$, and isotopic to $\beta$ in $S- P$ while remains in minimal position with $o$ in $S-P$. With these assumptions, combining Proposition \ref{prop_NC_approx} and Lemma \ref{radial_dist_pres}, we can conclude that 
  $$
  d_{o}^{\dagger}(o,\beta') = d^{\dagger}(o,\beta') = d_{S-P}([o],[\beta']) = d_{S-P}([o],[\beta]) = d^{\dagger}(o,\beta) = d_{o}^{\dagger}(o,\beta) = r.
  $$  
This implies that $\beta'\in \So(r)$ and the proof is complete.
\end{proof}

\begin{remark}
In the sequel, as for any $\alpha\in \NCo(S)$, we have the distance equality between $d^\dagger(o,\alpha)$ and $d^\dagger_o(o,\alpha)$, we will no longer distinguish these two distances when working on the spheres.
\end{remark}

\begin{lemma}\label{lem_iso}
    Let $r>0$ and $\alpha, \widehat{\alpha}\in \So(r)$ be isotopic on $S - P$, where $P$ is a finite subset of $S$. If $(o, \alpha)$ and $(o,\widehat{\alpha})$ are in minimal position on $S - P$, there exists a path on $\So(r)$ connecting $\alpha$ to $\widehat{\alpha}$, in which every vertex is a curve isotopic to $\alpha$ on $S - P$. If in addition, if $\gamma \in \NCo(S)$ is disjoint from both $\alpha$ and $\widehat{\alpha}$, then the path can be chosen so that all vertices on the path are disjoint from $\gamma$.
\end{lemma}
\begin{proof}
    Up to a small perturbation as in Lemma \ref{lem_pert}, we can take a curve $\alpha_1$ that is isotopic $\alpha$ to but disjoint from $\alpha$ and transverse to $\widehat{\alpha}$ (and also to $o$). As the perturbation can be made sufficiently small, one can ensure that there is no bigon between the curve $\alpha_1$ and the curve $o$ on $S-P$, or equivalently they remain in minimal position. If $\alpha_1$ is also disjoint from $\widehat{\alpha}$, the proof is done. Otherwise, if $\alpha_1 \cap \widehat{\alpha} \neq \emptyset$, there is at least one bigon between $\alpha_1$ and $\widehat{\alpha}$ as they are isotopic (so the minimal position is disjoint). Repeating the procedures of isotoping an subarc of $\alpha_1$ to the other side of $\alpha$, we can then construct a path $\alpha_1, \alpha_2,\dots,\alpha_n$ with $\alpha_i\cap \alpha_{i+1}=\emptyset$ for any $1\leq i<n$ and $\alpha_n\cap \widehat{\alpha}=\emptyset$. Moreover, as they remain in minimal position with $o$, by Proposition \ref{prop_NC_approx} and Lemma \ref{radial_dist_pres}, we still have $d_o^\dagger(o,\alpha_i)=r$ for all $i$, {\it i.e.} $\alpha_1, \alpha_2,\dots,\alpha_n$ form a desired path on $\So(r)$ connecting $\alpha$ to $\widehat{\alpha}$. For disjointness from $\gamma$, as $\gamma$ and $\alpha,\widehat{\alpha}$ contain {\it a priori} no bigon, at each step of the isotopy above, the newly constructed curve will not produce bigon with $\gamma$, {\it i.e.} they remain disjoint from $\gamma$.
\end{proof}

Lemma \ref{lem_iso} indicates that under the assumption of condition \ref{G2}, {\it i.e.} two curves $\alpha,\beta\in \So(r+1)$ both adjacent to a curve $\gamma\in\So(r)$, then we can connect $\alpha$ to $\alpha'$ (respectively $\beta$ to $\beta'$) by a path on $\So(r+1)$ where each vertex is again disjoint from $\gamma$, so that $o,\gamma,\alpha',\beta'$ are geodesic curves on a punctured surface $S$. In result, these curves all lie in an isometric copy of $\mathcal{C}^s(S-P)\hookrightarrow \mathcal{C}^\dagger(S)$, allowing us to deduce the condition \ref{G2} with $D=2$ as \cite{wright2023spheres}.

To be precise, we will prove the following result:
\begin{lemma}\label{lem_G2}
Let $S$ be a closed surface of genus $g\geq 3$ and let $\gamma\in \So(r)$ for some positive integer $r>0$ and $\alpha,\beta\in \So(r+1)$ that are adjacent to $\gamma$ in $\NCo(S)$. Then there exists a path 
$$\alpha=\nu_0,\nu_1,\dots,\nu_\ell=\beta$$
in $\So(r+1)$ connecting $\alpha$ to $\beta$ with $\nu_i\cap \gamma =\emptyset$.
\end{lemma}
\begin{proof}
Let $P\subset S$ be a finite collection of points such that the pairs $(o,\gamma)$, $(o,\alpha)$ and $(o,\beta)$ are in minimal position on $S-P$. Suppose that $S-P$ is carrying a hyperbolic metric such that $o$ and $\gamma$ are geodesic, of which the existence is guaranteed by Proposition \ref{prop_metric}. Let $\alpha'$ and $\beta'$ be the geodesic representative of $[\alpha]_{S-P}$ and $[\beta]_{S-P}$. By Lemma \ref{lem_iso}, we can already find paths on $\So(r+1)$ connecting $\alpha$ to $\alpha'$ and $\beta$ to $\beta'$ such that every vertices on the two paths are disjoint from $\gamma$. So to complete the proof, it suffices to prove the lemma for $\alpha'$ and $\beta'$.

Let $Y$ be the complementary component of $\gamma$ in $S-P$. Let $K>0$ be a positive integer, take
$$\mathcal{O}(\gamma,K)\coloneq \left\{\nu\in \So(r+1)\colon d_Y\big(\widehat{\pi_Y}(o),\widehat{\pi_Y}(\nu)\big)>K\right\}.$$
By appealing to the proof of \cite[Lemma 7.2]{wright2023spheres}, we can show that for any $K> M$, where $M$ is the constant from Theorem \ref{thm_surviving_bounded_geodesic_image}, we can find a path connecting $\alpha'$ to $\alpha''\in \mathcal{O}(\gamma,K)$ such that each vertices on the path is on $\So(r+1)$ and disjoint from $\gamma$. We remark that whenever the classical bounded geodesic image theorem is used in the proof of \cite[Lemma 7.2]{wright2023spheres}, we will replace it by Theorem \ref{thm_surviving_bounded_geodesic_image}. Note that $Y$ has genus at least $2$ and $[\alpha]_Y$ is non-separating on $Y$. Inspecting the proof of \cite[Lemma 7.4]{wright2023spheres}, the path above can be chosen such that all vertices are non-separating curves on $Y$ and {\it a fortiori} on $S$. The same can be done for $\beta'$ connecting to a $\beta''\in \mathcal{O}(\gamma,K)$. Moreover, as $\alpha''$ and $\beta''$ are disjoint from $\gamma$, their $Y$-subsurface projection consists only their isotopy class on $Y$, {\it i.e.} $[\alpha'']_Y,[\beta'']_Y\in \mathcal{C}(Y)$.

Now it remains to show that $\mathcal{O}(\gamma,K)$ is connected for some sufficiently large $K>M$. Use the connectedness of spheres in the curve graph of high complexity surfaces \cite[Theorem 1.1]{wright2023spheres}, we can obtain a path in $\mathcal{C}(Y)$
$$[\alpha'']_Y=q_0,\dots, q_n=[\beta'']_Y$$
such that $d_Y\big(q_i,\widehat{\pi_Y}(o)\big)>K>M+C$, where $C$ is the constant in \cite[Lemma 3.11]{wright2023spheres} for the existence of a path connecting two non-separating curves on $Y$, within distance $C$ from the geodesic in $\mathcal{C}(Y)$ between theses two curves, consisting of only isotopy classes of non-separating curves on $Y$, {\it i.e.} \cite[Lemma 3.11]{wright2023spheres} implies that we can modified the above path into a new path
$$[\alpha'']_Y=q_0',\dots, q_m'=[\beta'']_Y$$
where $q_i'$ are non-separating curves on $Y$ such that $d_Y\big(q_i',\widehat{\pi_Y}(o)\big)>M$. By descending to their geodesic representatives on $S-P$, Theorem \ref{thm_surviving_bounded_geodesic_image} applies to make it a path on $\So(r+1)$ connecting $\alpha''$ to $\beta''$ such that all vertices are disjoint from $\gamma$.

Now the concatenating path $\alpha\rightsquigarrow \alpha'\rightsquigarrow \alpha''\rightsquigarrow \beta''\rightsquigarrow \beta'\rightsquigarrow \beta$ will yield the desired path.
\end{proof}

\begin{lemma}\label{lem_G3}
Let $S$ be a closed surface of genus $g(S)\geq 3$. For every adjacent pair $\alpha, \beta \in \So(r)$, there exists a path
$$\alpha=\nu_0,\ \nu_1,\ \cdots,\ \nu_\ell=\beta$$
such that $\nu_i\in \So(r+1)$ for $0<i<\ell$ and $d^\dagger(\nu_i,\alpha)\leq 2$.
\end{lemma}
\begin{proof}
Let $P\subset S$ be a finite collection of points such that $o,\alpha,\beta$ are pairwise in minimal position.

\begin{enumerate}[label=(Case\ \arabic*),topsep=0pt, itemsep=-1ex, partopsep=1ex, parsep=1ex]
\item \label{case 1} First suppose that $(\alpha\cup\beta)$ is either isotopic or non-separating on $S$. Then let $Y$ be the complementary component in $S-P$ of $\alpha\cup \beta$ that is not a punctured cylinder. Again, we can find a non-separating curve $\gamma$ on $Y$ such that $d_Y\big(\widehat{\pi_Y}(o),\widehat{\pi_Y}(\gamma)\big)>M$ for $M>0$ from Theorem \ref{thm_surviving_bounded_geodesic_image}. So any geodesic in $\mathcal{C}^s(S-P)$ connecting $[o]_{S-P}$ to $[\gamma]_{S-P}$ must contain an element that does not intersect $Y$ essentially, whereas the only isotopy classes of surviving curves that do not intersecting $Y$ essentially are $[\alpha]_{S-P}$ to $[\beta]_{S-P}$. Note that $\gamma$ is surviving on $S-P$ and disjoint from $\alpha,\beta$. By making $\gamma$ in minimal position with $o$ on $S-P$, similar to the case in Lemma \ref{lem_G1}, we can conclude that $\gamma \in \So(r+1)$.

\item Now suppose that $(\alpha\cup\beta)$ is separating and let $Y_1,Y_2\subset S-(\alpha\cup \beta)$ be two complementary components. As $g(S)\geq 3$, we observe that $Y_1$ has genus $g(Y_1)\geq 1$. Similarly, take $\gamma\subset Y_1$ be a geodesic non-separating curve on $Y_1$ such that $d_{Y_1}\big(\pi_{Y_1}(o),[\gamma]_{Y_1}\big)>M$, where $M>0$ is the constant from Theorem \ref{thm_surviving_bounded_geodesic_image}. We claim that $d_{\mathcal{C}^s(S-P)}([o]_{S-P},[\gamma]_{S-P})\neq r-1$. Indeed, by Theorem \ref{thm_surviving_bounded_geodesic_image}, the geodesic in $\mathcal{C}^s(S-P)$ connecting $[o]_{S-P}$ to $[\gamma]_{S-P}$ must contain an element $[\xi]_{S-P}$ that dos not intersecting $Y_1$ essentially. If $d_{\mathcal{C}^s(S-P)}([o]_{S-P},[\gamma]_{S-P})=r-1$, then $d_{\mathcal{C}^s(S-P)}([o]_{S-P},[\gamma]_{S-P})\leq r-2$ and as a result, $[\xi]_{S-P}\neq [\alpha]_{S-P},[\beta]_{S-P}$ as $d_{\mathcal{C}^s(S-P)}([o]_{S-P},[\alpha]_{S-P})=d_{\mathcal{C}^s(S-P)}([o]_{S-P},[\beta]_{S-P})=r$. This forces that the geodesic representative $\xi$ to be contained in $Y_2$ and hence disjoint from $\alpha$ and $\beta$, making the concatenating path
$$[o]_{S-P}\rightsquigarrow [\xi]_{S-P} \text{ --- } [\alpha]_{S-P}$$
of length at most $r-1$. This contradicts our assumption that $\alpha\in\So(r)$. Hence $d^\dagger(o,\gamma)=d_{\mathcal{C}^s(S-P)}([o]_{S-P},[\gamma]_{S-P})=r$ or $r+1$. If $d^\dagger(o,\gamma)=r+1$, then we have done the proof. If $d^\dagger(o,\gamma)=r$, then $(\alpha\cup \gamma)$ and $(\beta\cup \gamma)$ are non-separating and appealing to \ref{case 1}, the two pairs yield $\nu_1,\nu_2\in \So(r+1)$ both adjacent to $\gamma\in \So(r)$. Now applying Lemma \ref{G2} to the triplet $(\nu_1,\nu_2,\gamma)$, we can get a path $\nu_1\rightsquigarrow \nu_2$ on $\So(r+1)$ such that all vertices are disjoint from $\gamma$. As a result, the concatenating path
$$\alpha \text{ --- }\nu_1 \rightsquigarrow \nu_2 \text{ --- }\beta$$
yields a path on $\So(r+1)$ connecting $\alpha$ to $\beta$ within $B_\alpha(2)$.
\end{enumerate}
Combining the two cases together proves the desired result.
\end{proof} 

Now we can deduce Theorem \ref{main_thm} for closed surfaces with higher genus.
\begin{proof}[Proof of Theorem \ref{main_thm}]
Indeed, $\partial\NCo(S)$ is connected and linear connected by Proposition \ref{prop_wright} for closed surfaces with genus $g(S)\geq 3$, as Lemma \ref{lem_G1}, Lemma \ref{lem_G2} and Lemma \ref{lem_G3} apply to verifies respectively the conditions \ref{G1}, \ref{G2} and \ref{G3}. Note that by Proposition \ref{prop_incl}, $\NCo(S)$ is quasi-isometric to $\NC^\dagger(S)$ and as a result $\partial\NCo(S)\simeq\partial \NC^\dagger(S)$. Finally, we remark that for a closed surface with genus $\geq 3$, we know that $\NC^\dagger(S)$ is quasi-isometric to $\mathcal{C}^\dagger(S)$ with $\partial \NC^\dagger(S)\simeq \mathcal{C}^\dagger(S)$ (see \cite{bowden2022quasi}). This concludes the desired result.
\end{proof}

\begin{remark}
If the base point $o$ is chosen as a separating curve on $S$, then instead of linear connectedness, we will obtain a {\it polynomial connectedness}, {\it i.e.} any two points on the boundary $\xi,\eta\in \partial\mathcal{C}^\dagger(S)$ can be included in a connected compact set with diameter at most $L\rho_{o,b}(\xi,\eta)+(L+1)b$ for some $L>0$, since the visual metrics verify the inequality $|\rho_{o,b}-\rho_{o',b}|\leq bd^\dagger(o,o')$.
\end{remark}

\subsection{Connectedness of the spheres.} In this subsection, we address the connectedness properties of spheres within the fine curve graph, building on techniques developed in \cite{wright2023spheres}:

\begin{lemma}[Lemma 2.1, \cite{wright2023spheres}]\label{lem_wright}
Let $\Gamma$ be an arbitrary graph and $o\in \Gamma$ be arbitrary. Fix $w>0$. Let $S_r$ be the $r$-sphere in $\Gamma$. Suppose in addition that for every $r>0$, the following conditions hold:
\begin{enumerate}[label=(G\arabic*')]
    \setcounter{enumi}{1}
    \item For every $z \in S_r$ and $x, y \in S_{r+1} \cap B_z(1)$ there exists a path
        $$x=x_0,\ x_1,\ \dots,\ x_{\ell}=y$$
        with
        $$x_i \in S_{r+1}\cup \dots \cup S_{r+w}$$
        for $0 \leq i \leq \ell$. \label{G2'}
    \item For every adjacent pair $x, y \in S_r$ there exists a path
        $$x=x_0,\ x_1,\ \dots,\ x_{\ell}=y$$
        with
        $$x_i \in S_{r+1}\cup \dots \cup S_{r+w}$$
    for $0<i<\ell$. \label{G3'}
Then for any $r>0$, the union $S_{r}\cup \dots \cup S_{r+w-1}$ is connected.
\end{enumerate}
\end{lemma}
\begin{remark}
We remark that the condition \ref{G2} (respectively \ref{G3}) is stronger than the condition \ref{G2'} (respectively \ref{G3'}).
\end{remark}

In order to show the connectedness of the spheres in $\mathcal{C}^\dagger(S)$, we will first check the connectedness of the spheres in $\NCo(S)$.

\begin{lemma}\label{lem_connect}
Let $S$ be a surface as given above. For any positive integer $r>0$, the sphere $\So(r)$ is connected.
\end{lemma}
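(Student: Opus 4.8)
The plan is to prove connectedness by induction on $r$, reducing the general case to the statement of Proposition \ref{prop_NC_approx} that "the fine curve graph looks like the curve graph of a punctured subsurface." The base case $r=1$ amounts to showing that any two curves disjoint from $o$ can be joined inside $\So(1)$; since $\So(1)$ is (coarsely) the link of $o$, this is essentially connectivity of the curve graph of the surface $S$ cut along $o$, a surface of complexity large enough (genus $g\geq 2$) that its curve graph is connected. For the inductive step, I would fix $\alpha,\beta\in\So(r)$ and aim to build a path between them one vertex at a time. First, using Lemma \ref{lem_trans_all}, I may replace $\beta$ by a disjoint $\beta'\in\So(r)$ transverse to $\alpha$, so without loss of generality $\alpha$ and $\beta$ are transverse; choosing a finite set $P\subset S$ as in Proposition \ref{prop_NC_approx} so that $(o,\alpha)$ and $(o,\beta)$ are all in minimal position in $S-P$, and a hyperbolic metric (Proposition \ref{prop_metric}) realizing $o$ as a geodesic there, I land inside $\NC(S-P)$ where I can use known connectivity of spheres in curve graphs.

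The main work is then to transfer a path in a sphere of $\NC(S-P)$ back to a path in $\So(r)\subset\NC^\dagger(S)$. By Proposition \ref{prop_NC_approx} we know $d_{\NC(S-P)}([\alpha]_{S-P},[o]_{S-P}) = d^\dagger(\alpha,o) = r$ and similarly for $\beta$, so $[\alpha]_{S-P}$ and $[\beta]_{S-P}$ lie on the $r$-sphere of $\NC(S-P)$ about $[o]_{S-P}$. Invoking the connectivity of spheres in the (non-separating) curve graph — this is \cite[Proposition 5.4]{wright2023spheres} applied to $S-P$, which has sufficiently high complexity — I get a path $[\alpha]_{S-P}=[\mu_0]_{S-P},[\mu_1]_{S-P},\dots,[\mu_m]_{S-P}=[\beta]_{S-P}$ in that sphere. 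Each class $[\mu_j]_{S-P}$ has a geodesic representative $\mu_j$ for the chosen metric, which is a smooth curve disjoint from $P$ and transverse to $o$, hence lies in $\NCo(S)$; by the distance formula in Proposition \ref{prop_NC_approx}, $d^\dagger(\mu_j,o)=r$, so $\mu_j\in\So(r)$. Consecutive geodesic representatives $\mu_j,\mu_{j+1}$ need not be literally disjoint in $S$ even though their classes are adjacent in $\NC(S-P)$, so I interpolate: apply Lemma \ref{lem_trans_all} and a small isotopy to replace $\mu_{j+1}$ by a disjoint curve in the same $S-P$-class that is actually disjoint from $\mu_j$ in $S$ (this is exactly the kind of perturbation remarked after Lemma \ref{lem_pert}: push off within a thin annulus, keeping minimal position with $o$, hence keeping membership in $\So(r)$). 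This yields an edge path in $\So(r)$ from $\alpha$ to $\beta$.

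The step I expect to be the main obstacle is precisely this last interpolation — ensuring that when I realize an abstract edge of $\NC(S-P)$ by concrete smooth curves on $S$, I can make the two curves genuinely disjoint in $S$ (not merely disjoint in $S-P$, which they automatically are since disjointness is unaffected by puncturing, but the issue is that distinct geodesic representatives of adjacent classes may intersect) while simultaneously preserving the radius condition $d^\dagger(\cdot,o)=r$. The resolution is that disjointness of isotopy classes in $S-P$ already gives disjoint representatives in $S-P\subset S$, so the geodesic representatives can be isotoped (in $S-P$, hence away from $P$) to be disjoint in $S$, and by Proposition \ref{prop_NC_approx} the $S$-distance to $o$ is controlled by the $S-P$-isotopy class, which is unchanged; one just has to check the perturbation keeps minimal position with $o$ in $S-P$, which is the content of Lemma \ref{lem_pert} and the remark following it. The only remaining care is to handle the case $\alpha=\beta'$ or $\alpha,\beta$ already adjacent (trivial), and to make sure that when we enlarge $P$ (it may need to be taken compatibly for all the $\mu_j$ at once — but since the $\mu_j$ are produced from a single geodesic and minimal-position data, one fixed $P$ suffices), nothing goes wrong with essentiality or non-separation, which holds because $g\geq 2$ keeps every relevant subsurface of high enough complexity.
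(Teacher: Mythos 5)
Your overall route is the same as the paper's: perturb $\beta$ to a disjoint $\beta'\in\So(r)$ transverse to $\alpha$ (Lemma \ref{lem_trans_all}), choose $P$ and a hyperbolic metric on $S-P$, quote \cite[Proposition 5.4]{wright2023spheres} for sphere connectivity in $\NC(S-P)$, and pull the path back via geodesic representatives. (Your ``induction on $r$'' is decorative: the inductive hypothesis is never used, so it is really a direct argument.) However, the step you flag as the main obstacle is in fact a non-issue, and the patch you propose for it is where the genuine gap sits. Distinct simple closed geodesics on a hyperbolic surface are automatically in minimal position, so if two isotopy classes in $S-P$ are adjacent in $\NC(S-P)$ (geometric intersection number zero), their geodesic representatives are literally disjoint in $S-P\subset S$. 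This is exactly why the map sending a class to its geodesic representative is a simplicial (indeed isometric, cf.\ Remark \ref{rem_geo} and Proposition \ref{prop_NC_approx}) embedding $\NC(S-P)\hookrightarrow\NCo(S)$, and the transferred path is an honest edge path in $\So(r)$ with no interpolation needed.

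The real loose end in your write-up is at the endpoints, and your interpolation does not repair it. You only make $o$ geodesic, so the transferred path joins the geodesic representatives of $[\alpha]_{S-P}$ and $[\beta']_{S-P}$, which are isotopic to, but in general distinct from and possibly intersecting, $\alpha$ and $\beta'$ themselves. Joining a curve to an isotopic curve \emph{inside the sphere} $\So(r)$ is not automatic: it is the content of the paper's Lemma \ref{lem_iso} (a bigon-removal argument that must preserve minimal position with $o$), which you neither invoke nor reprove. The paper sidesteps this in Lemma \ref{lem_connect} by using Proposition \ref{prop_metric} to choose the metric so that $\alpha$ and $\beta'$ are themselves geodesics, making them the actual endpoints of the transferred path. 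Moreover, as stated, your interpolation ``replace $\mu_{j+1}$ by a disjoint curve in the same class'' destroys the adjacency of the new vertex with $\mu_{j+2}$, so you would have to insert (not replace) and verify the extra disjointness and minimal-position conditions at every step --- a cascade that the geodesic-representative trick makes entirely unnecessary. With the endpoints handled as in the paper (or via Lemma \ref{lem_iso}), your argument closes up.
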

\begin{proof}
Lemma \ref{lem_G2} and Lemma \ref{lem_G3} show that the graph $\NCo(S)$ verifies \ref{G2} and \ref{G3}, {\it a fortiori}  \ref{G2'} and \ref{G3'}. By Lemma \ref{lem_wright}, the sphere $\So(r)$ is connected.
\end{proof}

First, we observe that, by Lemma \ref{radial_dist_pres},
\begin{equation}\label{observation}
    \So(r) \subset \mathcal{S}_o(r) = \{\alpha\in \NC^\dagger(S) \colon d^\dagger(\alpha,o)=r\}.
\end{equation}
Hence the sphere $\So(r)$ forms a subgraph in the sphere $\mathcal{S}_o(r)$. Although Proposition \ref{prop_incl} already shows that $\NCo(S)$ is $1$-dense in $\NC^\dagger(S)$, the following lemma further demonstrates that it is in fact $1$-dense at each $r$-sphere.

\begin{lemma}\label{lem_one_dense}
Let $S$ be a surface as given above. For any $\alpha \in \mathcal{S}_o(r)$, there exists $\alpha' \in \So(r)$ such that $d^\dagger(\alpha,\alpha') = 1$, i.e., $\So(r)$ is 1-dense in $\mathcal{S}_o(r)$.
\end{lemma}
\begin{proof}
Let $o, v_1, v_2, \dots, v_{r-1}, \alpha$ be a geodesic path in $\NC^\dagger(S)$ connecting $o$ to $\alpha$. There is a finite subset $P \subset S$ and $\alpha_1\in \NCo(S)$ both disjoint from $\alpha$ such that $\alpha_1$ is also disjoint from $v_{r-1}$, isotopic to $\alpha$ on $S-P$ and in minimal position with $o$ on $S - P$. By Lemma \ref{radial_dist_pres} and triangle inequality,
$$
r-1=d_o^{\dagger}(o,\alpha)-d_o^{\dagger}(\alpha,\alpha_1) \leq d^{\dagger}(o, \alpha_1)= d_{o}^{\dagger}(o, \alpha_1) \leq d^{\dagger}_o(o,v_{r-1})+d^{\dagger}_o(v_{r-1},\alpha_1)=r.
$$
If $d_{o}^{\dagger}(o, \alpha_1) = r$, then there is nothing to prove. So we assume that $d_{o}^{\dagger}(o, \alpha_1) = r - 1$. 
   
By Lemma \ref{lem_G3}, we can take a $\beta\in \So(r)$ disjoint from both $v_{r-1}$ and $\alpha_1$. Note that $\alpha\cap o$ is not discrete. Since $\alpha$ and $o$ are proper images, we can take a family $\big(U_i\big)_{i=1,\dots, m}$ of small open subsets on $S$ such that $U_i\cap U_j=\emptyset$ for $i\neq j$, $U\coloneqq \bigcup_{i=1}^m U_i$ does not contain the entire $o$ and $(\alpha\cap o)-U$ is discrete, {\it i.e.,} $\alpha$ and $o$ are transverse outside of $U$. Hence we can similarly isotope $\beta$ as in Lemma \ref{lem_pert} so that $\beta$ is transverse to both $o$ and $\alpha$ outside of $U$. Finally, by isotoping $\beta$ away from $U$ and preserving the intersection pattern, we can obtain a $\beta$ that is transverse to both $\alpha$ and $o$. Note that if $U$ is small enough, then $\beta$ can remain in minimal position with $o$ on $S-P$, guaranteeing that $d^{\dagger}_o(o,\beta)=r$ after a quick application of Lemma \ref{radial_dist_pres} and Proposition \ref{prop_NC_approx}.

If $\beta$ is already disjoint from $\alpha$, then $\beta$ will be the $\alpha'$ we are searching for. Otherwise, there exists at least one bigon bounded by subarcs of $\alpha$ and $\beta$ on $S - P$. By isotope the subarc of $\beta$ along the bigon to the other side of $\alpha$ and then perturb the curve, we can obtain a curve $\beta_1$ with strictly fewer intersection points with $\alpha$, disjoint from $\beta$ and remaining transverse to $o$. Since this bigon does not contain any points from $P$, $\beta_1$ stays in minimal position with $o$. Hence by Lemma \ref{radial_dist_pres} and Proposition \ref{prop_NC_approx} again, $\beta_1 \in \So(r)$. Because $\alpha_1\in [\alpha]_{S-P}$ and $\beta\in [\beta]_{S-P}$ are disjoint, we can inductively construct a path $\beta_1, \beta_2, \cdots, \beta_n \subset \So(r)$ until $\beta_n$ and $\alpha$ are in minimal position (\textit{i.e.} no bigon), which in this case will be disjoint. Thus $\alpha' = \beta_n$ is the curve we are looking for.

Finally, we remark that all the perturbation and isotopy can be done on the subsurface $S-\gamma$ if $\alpha$ is disjoint from $\gamma$. This means that $\alpha'$ can be taken disjoint from $\gamma$.
\end{proof}

Note that the perturbation in the proof of Lemma \ref{lem_one_dense} can also be carried out for separating curves. Using the $d_\mathcal{C}^s(S-P)$-distance estimation for curve in minimal position instead of $d_{\mathcal{NC}(S-P)}$, we can conclude the same result for separating curves:
\begin{lemma}\label{lem_one_dense'}
Let $S$ be a surface as given above. For any $\alpha \in \mathcal{C}^\dagger(S)$ with $d^\dagger(o,\alpha)=r>0$, there exists $\alpha' \in \mathcal{C}^\dagger(S)$ transverse to $o$ such that $d^\dagger(\alpha,\alpha') = 1$ and $d^\dagger(o,\alpha') = r$. 
\end{lemma}

Now we are able to conclude Theorem \ref{thm1}.
\begin{proof}[Proof of Theorem \ref{thm1}]
Let $r>0$ be any positive integer. By Lemma \ref{lem_connect}, the sphere $\So(r)$ in $\NCo(S)$ is connected. As $\So(r)$ is $1$-dense in the sphere $\mathcal{S}_o(r)$ in $\mathcal{NC}^\dagger(S)$ by Lemma \ref{lem_one_dense}, $\mathcal{S}_o(r)$ is also connected.
\end{proof}

As an corollary, we can deduce a connectedness of thickened spheres in the fine curve graph as below:
\begin{corollary}
Let $S$ be as given above. Then for any integer $r>0$ and any $o\in\mathcal{C}^\dagger(S)$, then the thickened sphere
$$\Sigma_r\coloneqq\left\{\gamma\in\mathcal{C}^\dagger(S)\colon d^\dagger(o,\gamma)=r,r+1\right\}$$
is connected.
\end{corollary}
\begin{proof}
Let $r>0$ be any positive integer. Let us consider the subgraph
$$\Co(S)\coloneqq\left\{\alpha\in\mathcal{C}^\dagger(S)\colon \alpha\text{ is transverse to }o\right\}\subset \mathcal{C}^\dagger(S)$$
of curves transverse to $o$. By Remark \ref{rem_radii_non_sep}, the thickened sphere in $\Co(S)$
$$\Sigma^\pitchfork_r\coloneqq\left\{\gamma\in\Co(S)\colon d^\dagger(o,\gamma)=r,r+1\right\}$$
is a subgraph of $\Sigma_r$. By the virtue of Lemma \ref{lem_one_dense'}, it suffices to verify that $\Sigma^\pitchfork_r$ is connected.

Indeed, let $\alpha,\beta\in \Co(S)$ be any disjoint pair with $d^\dagger(o,\alpha)=d^\dagger(o,\beta)=r$. Similar to the previous cases, we can find a non-separating curve $\alpha'\in \NCo(S)$ disjoint from $\alpha$ such that $d_Y\big(\pi_Y(o),\pi_Y(\alpha')\big)>M$, where $M$ is the constant from Theorem \ref{thm_surviving_bounded_geodesic_image} and $Y=S-\alpha$. This will make $d^\dagger(o,\alpha')=r+1$, or equivalently $\alpha'\in \So(r+1)$. Similarly, we can also find a $\beta'\in \So(r+1)$ disjoint from $\beta$. Appealing to Lemma \ref{lem_connect}, we can find a path on $\So(r+1)$ connecting $\alpha'$ to $\beta'$. Now the concatenating path
$$\alpha \text{ --- } \alpha'\rightsquigarrow \beta'\text{ --- } \beta$$
verifies the condition \ref{G3'} for $\Co(S)$ with $w=1$. The same arguments can also allow us to verify \ref{G2'} for $\Co(S)$ with $w=2$. Hence Lemma \ref{lem_wright} applies to complete the proof.
\end{proof}
\section{Group action on the boundary}
In this section, let us assume that the curves are only $\mathcal{C}^0$-embedding of $S^1$ into $S$, {\it i.e.} we will be considering the topological version of the fine curve graphs. As per Remark \ref{rem_topological_version}, there is no difference in terms of their boundary at infinity.

Let $S$ be an orientable compact surface as above and let $\homeo(S)$ be the homeomorphism group of $S$. In this section, we will make several observations on the action of the topological group $\homeo(S)$ on the topological space $\partial \mathcal{C}^\dagger(S)$.

We remark that $\homeo(S)$ acts by isometries on the fine curve graph $\mathcal{C}^\dagger(S)$ and thus on the Gromov bordification of $\mathcal{C}^\dagger(S)$. But the action of $\homeo(S)$ on $\mathcal{C}^\dagger(S)$ is not continuous: a sequence of homeomophisms $g_n\to \Id$ in compact-open topology (and even isotopic to $\Id$) can send a simple closed curve $\gamma\subset S$ to $g_n \gamma$ such that $g_n \gamma$ is disjoint from $\gamma$ for all $n$, {\it i.e.} $d^\dagger(g_n \gamma,\gamma)=1$ does not converge to $0$ as $n\to \infty$. But since we are only interested in its geometry at large scale, this does not cause any problem:

\begin{proposition}
The action of $\homeo(S)$ on $\partial \mathcal{C}^\dagger(S)$ is (uniformly) continuous.
\end{proposition}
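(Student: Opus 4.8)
The goal is to show that the isometric action of $\homeo(S)$ on $\mathcal{C}^\dagger(S)$ extends to a \emph{continuous} action on $\partial\mathcal{C}^\dagger(S)$, even though the action on $\mathcal{C}^\dagger(S)$ itself is not continuous. The plan is to exploit the uniform structure on $\partial\mathcal{C}^\dagger(S)$ generated by the shadow entourages (or equivalently the visual metric $\rho_{o,b}$), and to show that both the group-coordinate and the boundary-coordinate dependence are controlled at large scale by genuinely coarse data, which the topology of the boundary is insensitive to. Concretely, I would fix a base point $o\in\mathcal{C}^\dagger(S)$ and work with the Gromov product $\langle\cdot,\cdot\rangle_o$ and the estimate (\ref{est_visual_metric}) relating it to $\rho_{o,b}$.

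\medskip

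First I would establish joint continuity at an arbitrary pair $(g_0,\xi_0)\in\homeo(S)\times\partial\mathcal{C}^\dagger(S)$. Since $\homeo(S)$ acts by isometries, for any $g$ and any points $x,y$ we have $\langle gx,gy\rangle_{go}=\langle x,y\rangle_o$, so the only genuine issue is comparing the moved base point $go$ with the fixed base point $o$. The key lemma I would prove is a \emph{uniform bound on base-point movement}: if $g_n\to g_0$ in the compact-open topology, then $d^\dagger(g_n o,\, g_0 o)$ is bounded — in fact, by the argument recalled in the paragraph preceding the statement (mollify and push off), $g_n o$ and $g_0 o$ can be joined by curves disjoint from each other, giving $d^\dagger(g_n o, g_0 o)\le 1$ for $n$ large, or at the very least a uniform constant $C_0$. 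This is the analogue, at the level of a single orbit point, of the observation that $g_n\to\Id$ only moves a curve a bounded amount. Then the standard change-of-base-point inequality $|\langle x,y\rangle_o-\langle x,y\rangle_{o'}|\le d^\dagger(o,o')$ for the Gromov product, together with its boundary extension up to $O(\delta)$ via (\ref{product_est}), shows that $\langle g_n\xi_n,\, g_n\eta\rangle_o$ and $\langle g_0\xi_n,\, g_0\eta\rangle_o$ differ by a uniformly bounded amount — but boundedness is not enough for continuity, so one must be more careful.

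\medskip

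The right way to organize the argument, and what I expect to be the main obstacle, is this. Continuity of $(g,\xi)\mapsto g\xi$ means: for every $R>0$ there is a neighbourhood $U$ of $g_0$ in $\homeo(S)$ and $R'>0$ such that $\langle g\xi, g_0\xi_0\rangle_o>R$ whenever $g\in U$ and $\langle\xi,\xi_0\rangle_o>R'$. Write $\langle g\xi, g_0\xi_0\rangle_o \ge \min\{\langle g\xi, g_0\xi\rangle_o,\ \langle g_0\xi, g_0\xi_0\rangle_o\}-\delta$. The second term equals $\langle\xi,\xi_0\rangle_{g_0^{-1}o}$, which is $\ge \langle\xi,\xi_0\rangle_o - d^\dagger(o,g_0^{-1}o)$ and hence exceeds $R+\delta$ once $R'$ is large enough (for fixed $g_0$). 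So everything reduces to showing the first term $\langle g\xi, g_0\xi\rangle_o$ can be made $>R+\delta$ for \emph{all} $\xi$ simultaneously by shrinking $U$. Here is where the non-continuity of the $\mathcal{C}^\dagger(S)$-action bites: $g$ and $g_0$ may differ by $\le 1$ in every vertex but there is no reason $g\xi$ and $g_0\xi$ are close on the \emph{boundary} for $g$ merely close to $g_0$ in compact-open topology — unless $g^{-1}g_0$ lies in a neighbourhood of $\Id$ small enough that it moves every curve by a uniformly bounded combinatorial amount. So the real content is: \textbf{there is a neighbourhood $V$ of $\Id$ in $\homeo(S)$ and a constant $C$ such that $d^\dagger(h\gamma,\gamma)\le C$ for all $h\in V$ and all $\gamma\in\mathcal{C}^\dagger(S)$.} I would prove this by taking $V$ to consist of homeomorphisms supported in (or $C^0$-close enough to identity to be isotoped within) a small disk, so that $h\gamma$ is isotopic to $\gamma$, and then by the push-off argument $d^\dagger(h\gamma,\gamma)\le 2$; more robustly, any $h$ sufficiently $C^0$-close to $\Id$ is a product of a bounded number of such ``small-support'' homeomorphisms, giving a uniform $C$. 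Granting this, for $g\in g_0 V$ we get $d^\dagger(g\gamma, g_0\gamma)=d^\dagger(g_0^{-1}g\,\gamma,\gamma)\le C$ uniformly in $\gamma$, hence $\langle g\xi,g_0\xi\rangle_o\ge \langle\xi,\xi\rangle_o - O(C)-O(\delta)=\infty$ in the boundary sense — precisely, passing to a representing sequence $(x_n)\in\xi$, $\langle g x_n, g_0 x_n\rangle_o \ge d^\dagger(o,x_n)-O(C)\to\infty$, so $\langle g\xi, g_0\xi\rangle_o=\infty>R+\delta$. Combining the two estimates via (\ref{eq2.2}) closes the argument.

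\medskip

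To summarize the order of steps: (1) reduce continuity to the two Gromov-product estimates above using $\delta$-hyperbolicity; (2) handle the boundary-variable term by the change-of-base-point inequality for the Gromov product, which is elementary; (3) prove the uniform-bounded-movement lemma for a neighbourhood of $\Id$ in $\homeo(S)$ — this is the heart of the matter and the step I expect to require the most care, since it is exactly where the failure of pointwise continuity must be shown to wash out at infinity; (4) assemble. I would also remark that the same reasoning shows the action map is separately continuous and, with the visual metric $\rho_{o,b}$ in hand, that each $g$ acts as a (bi-Lipschitz, even) homeomorphism of $(\partial\mathcal{C}^\dagger(S),\rho_{o,b})$, with bi-Lipschitz constant $b^{d^\dagger(o,go)}$, which makes the continuity statement quantitatively transparent.
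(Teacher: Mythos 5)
Your reduction in the third paragraph is sound as far as it goes, but the statement you yourself identify as ``the heart of the matter'' is false: there is \emph{no} neighbourhood $V$ of $\Id$ in $\homeo(S)$ and constant $C$ with $d^\dagger(h\gamma,\gamma)\le C$ for all $h\in V$ and all curves $\gamma$. Both of your justifications for it break down. First, ``$h\gamma$ is isotopic to $\gamma$, hence $d^\dagger(h\gamma,\gamma)\le 2$ by push-off'' conflates being isotopic with being close in $\mathcal{C}^\dagger(S)$: the push-off argument only says $\gamma$ is at distance $1$ from \emph{some} isotopic copy, and in fact isotopic curves can be arbitrarily far apart in $\mathcal{C}^\dagger(S)$ -- this is exactly the phenomenon (parabolic and hyperbolic elements of the identity component) recalled in the introduction. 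Second, even the disk-supported case fails uniformly in $\gamma$: take a tiny disk $D$, three points $Z\subset D$, and let $h$ be a high power of a pseudo-Anosov braid of $(D,Z)$ extended by the identity; then $h$ lies in every compact-open neighbourhood of $\Id$ (its support has small diameter), yet for curves $\gamma$ avoiding $Z$ with $d_{\mathcal{C}(S-Z)}([\gamma],[\partial D])$ large, the comparison $d^\dagger(\gamma,h\gamma)\ge d_{\mathcal{C}(S-Z)}([\gamma],[h\gamma])$ (push a fine path off the finite set $Z$, as in the Bowden--Hensel--Webb estimates behind Proposition \ref{prop_NC_approx}) combined with the bounded geodesic image theorem for the subsurface $D-Z$ gives $d^\dagger(\gamma,h\gamma)\gtrsim 2\,d_{\mathcal{C}(S-Z)}([\gamma],[\partial D])$, which is unbounded. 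So no uniform bounded-displacement lemma near the identity can hold, and with it your estimate $\langle g\xi,g_0\xi\rangle_o=\infty$ for all $\xi$ collapses; boundedness of $d^\dagger(g o,g_0 o)$ alone, as you note yourself, is not enough.

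The paper's proof circumvents precisely this obstruction: it never controls $d^\dagger(g_n\gamma,\gamma)$ uniformly over $\gamma$, but only for the two \emph{fixed} curves $o$ and a curve $\alpha$ chosen deep in the shadow (\ref{eq_shadow}) of the target point $\xi$; for a fixed curve, $g_n\to\Id$ in the compact-open topology eventually keeps $g_n\alpha$ inside a small neighbourhood of $\alpha$, and a curve disjoint from that neighbourhood gives $d^\dagger(\alpha,g_n\alpha)\le 2$. Hyperbolicity, via (\ref{eq2.2}) and (\ref{product_est}), then transports this control on a single deep curve to all boundary points $\eta$ in the shadow, which is what continuity on $\partial\mathcal{C}^\dagger(S)$ actually requires. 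If you want to salvage your outline, replace step (3) by this shadow argument; as written, the proposal rests on a false lemma and does not prove the proposition.
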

\begin{proof}
It suffices to show that whenever given a sequence $g_n\to \Id$ as $n\to \infty$, for any $M>0$ and $\xi\in \partial \mathcal{C}^\dagger(S)$, there exists $M'>0$ and $N>0$ such that if $\langle \xi,\eta\rangle_o>M'$, then $\langle \xi,g_n\eta\rangle_o>M$ for any $n>N$. 

Indeed, since the open sets on the boundary is also generated by shadows defined in (\ref{eq_shadow}), we can find an $\alpha\in\mathcal{C}^\dagger(S)$ such that whenever $\eta\in\partial \mathcal{C}^\dagger(S)$ satisfies $\langle \alpha,\eta\rangle_o>R$, we will have $\langle \eta,\xi\rangle_o>M$. Moreover, we can assume that $\langle \alpha,\xi\rangle_o$ is much larger than $R$. 

Since the topology on $\homeo(S)$ is the compact-open topology, there exists $N>0$ such that $g_n \alpha$ is contained in a uniformly small neighbourhood of $\alpha$ for every $n>N$. This implies that for any curve $\gamma\subset S$ that is disjoint from this small neighbourhood and {\it a fortiori} from $\alpha$, $\gamma$ is also disjoint from $g_n \alpha$ for all $n>N$. Hence $d^\dagger(\alpha,g_n\alpha)\leq 2$ for all $n>N$. The same arguments hold for $o$ so that $d^\dagger(o,g_no)\leq 2$ for $n> N$. Let $\eta\in\partial \mathcal{C}^\dagger(S)$ such that $\langle \eta,\xi\rangle_o>M'$. We compute by (\ref{eq2.2}) and (\ref{product_est})
\begin{align*}
    \langle \alpha,g_n\eta\rangle_o&=\langle g_n^{-1}\alpha,\eta\rangle_{g_n^{-1}o}\geq \langle g_n^{-1}\alpha,\eta\rangle_o-2-O(\delta)\\
    &\geq \min \big(\langle\xi,g_n^{-1}\alpha\rangle_o, \langle \xi,\eta\rangle_o\big)-2-O(\delta)\\
    &\geq \min \big(\langle\xi,g_n^{-1}\alpha\rangle_o, M'\big)-2-O(\delta)
\end{align*}
But as $d^\dagger(\alpha,g_n\alpha)\leq 2$, we have $\left|\langle\xi,g_n^{-1}\alpha\rangle_o-\langle\xi,\alpha\rangle_o\right|\leq 2+O(\delta)$, which means that $\langle\xi,g_n^{-1}\alpha\rangle_o$ is also much larger than $R$. Hence by letting $M'$ be large enough, we can assure that $\langle \alpha,g_n\eta\rangle_o>R$, which implies that $\langle \xi,g_n \eta\rangle_o>M$ for all $n>N$.
\end{proof}

Recall that for a $\delta$-hyperbolic space $X$ and a subgroup $G<\Isom(X)$. Let $o\in X$ be a (and thus any) point in $X$. The {\it limit set} of $G$, denoted $\Lambda(G)$, is given by
$$\Lambda(G)\coloneqq \left\{\xi \in \partial X \colon g_n o\to \xi \ \mathrm{for\ some\ }(g_n)\in G^\mathbb{N} \right\}.$$
Now let us examine the limit set of $\homeo(S)$ on the Gromov boundary of $\mathcal{C}^\dagger(S)$.

\begin{lemma}\label{lemma_lim_set}
Let $S$ be an orientable surface as above. The limit set of $\homeo(S)$ on $\partial \mathcal{C}^\dagger(S)$ is $\partial \mathcal{C}^\dagger(S)$.
\end{lemma}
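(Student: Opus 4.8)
The plan is to show that every boundary point $\xi\in\partial\mathcal{C}^\dagger(S)$ is a limit of the $\homeo(S)$-orbit of the base point $o$. Fix a representative Cauchy--Gromov sequence $(x_n)_{n\ge 0}\in\xi$; after passing to a subsequence and applying Proposition \ref{prop_folklore} we may instead work with a continuous $(1,N\delta)$-quasi-geodesic ray $\gamma$ from $o$ to $\xi$, whose vertices $\gamma(0)=o,\gamma(1),\gamma(2),\dots$ are curves in $\mathcal{C}^\dagger(S)$ with $d^\dagger(\gamma(n),\gamma(m))$ coarsely $|n-m|$. The strategy is to produce, for each $n$, a homeomorphism $g_n\in\homeo(S)$ with $g_n o$ uniformly close to $\gamma(n)$ in $\mathcal{C}^\dagger(S)$; then $d^\dagger(g_n o,\gamma(n))$ is bounded, hence $\langle g_n o,\gamma(n)\rangle_o\to\infty$, so $(g_n o)$ is Cauchy--Gromov and equivalent to $(\gamma(n))$, i.e. $g_n o\to\xi$.

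To build $g_n$, I would use the change-of-coordinates principle for surface homeomorphisms together with the fact that $o$ and $\gamma(n)$ are both essential simple closed curves of the \emph{same} topological type up to homeomorphism when both are non-separating (or more generally when they cut $S$ into homeomorphic pieces). Concretely: pick $o$ once and for all to be non-separating (legitimate, since $\partial\NC^\dagger(S)\simeq\partial\mathcal{C}^\dagger(S)$ by the discussion after Proposition \ref{prop_NC_approx}, so we may compute the limit set there); then for any non-separating curve $\delta_n$ disjoint from $\gamma(n)$ — which exists with $d^\dagger(\gamma(n),\delta_n)\le 1$ by the same perturbation argument as Lemma \ref{lem_pert}, arranging $\delta_n$ non-separating using genus $\ge 2$ — the classification of surfaces gives a homeomorphism $g_n\colon S\to S$ with $g_n(o)=\delta_n$. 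Then $d^\dagger(g_n o,\gamma(n))\le 1$, which is exactly the uniform bound needed. (Alternatively one can arrange $g_n(o)=\gamma(n)$ directly when $\gamma(n)$ is itself non-separating, skipping the perturbation step.)

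The remaining point is purely formal: from $d^\dagger(g_n o,\gamma(n))\le C$ for a fixed constant $C$ and the fact that $(\gamma(n))$ is Cauchy--Gromov (being a quasi-geodesic ray to $\xi$), the inequality $\langle g_n o, g_m o\rangle_o \ge \langle \gamma(n),\gamma(m)\rangle_o - C - O(\delta)$ — immediate from the definition \eqref{eq2.1} and the triangle inequality — shows $(g_n o)$ is Cauchy--Gromov, and $\langle g_n o,\gamma(n)\rangle_o \ge d^\dagger(o,\gamma(n))/2 - O(\delta)\to\infty$ together with \eqref{eq2.2} shows it converges to the same boundary point $\xi$. Since $\xi$ was arbitrary, $\Lambda(\homeo(S))=\partial\mathcal{C}^\dagger(S)$, and the reverse inclusion is trivial. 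The main (and really only) obstacle is the bookkeeping in the previous paragraph: ensuring that one can always slide to a non-separating curve of the correct isotopy/topological type disjoint from $\gamma(n)$ so that the change-of-coordinates homeomorphism exists — this uses $g\ge 2$ in an essential way and is where one must be slightly careful about whether $S$ has punctures/boundary, but it is otherwise routine.
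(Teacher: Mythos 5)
Your proposal is correct and is essentially the paper's own argument: the key point in both is the change-of-coordinates fact that $\homeo(S)$ acts transitively on non-separating curves, so the orbit of a non-separating base point $o$ stays at bounded distance from a sequence converging to any given $\xi$, hence $g_n o\to\xi$. The paper streamlines this by taking an arbitrary Cauchy--Gromov sequence in $\NC^\dagger(S)$ representing $\xi$ (so every term is already non-separating and transitivity gives $\phi_n(o)=\alpha_n$ exactly), which makes your quasi-geodesic-ray and perturbation-to-a-disjoint-non-separating-curve steps unnecessary, though harmless.
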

\begin{proof}
Let us consider $\NC^\dagger(S)$, which is quasi-isometric to $\mathcal{C}^\dagger(S)$. We remark that the homeomorphism $\partial \NC^\dagger(S)\to \partial \mathcal{C}^\dagger(S)$ is $\homeo(S)$-equivariant. So it suffices to show that the limit set of $\homeo(S)$ on $\partial \NC^\dagger(S)$ is $\partial \NC^\dagger(S)$ itself. Indeed, by the theorem of classification of finite-type surfaces, for any two non-separating simple closed curves $\alpha,\beta\subset S$, the bordered surfaces $S-\alpha$ and $S-\beta$ are homeomorphic, so there exists $\phi\in \homeo(S)$ such that $\phi(\alpha)=\beta$, {\it i.e.} the action of $\homeo(S)$ on $\NC^\dagger(S)$ is transitive. Let $o\in \NC^\dagger(S)$ be any base point and $\xi\in \partial \NC^\dagger(S)$. If $\alpha_n$ is a Cauchy-Gromov sequence converging to $\xi$, then by transitivity of the action, for each $n>0$, there exists $\phi_n\in \homeo(S)$ such that $\phi_n(o)=\alpha_n$. But this means that $\phi_n(o)\to \xi$. Hence we can conclude that $\Lambda\big(\homeo(S)\big)$ is the entire boundary.
\end{proof}

Recall that an isometric group action on a $\delta$-hyperbolic space $X$ is of {\it general type} if there exists two independent hyperbolic isometries, or equivalently if the action admits no fixed point on $\partial X$. For isometric actions of general type on $\delta$-hyperbolic spaces, there is an interesting dynamical property:
\begin{proposition}[Corollary 7.4.3 (ii), \cite{das2017geometry}]\label{prop_limit_set}
Let $X$ be a $\delta$-hyperbolic space and let $G$ act on $X$ by isometries. Suppose in addition that the action is of general type. Then the limit set $\Lambda(G)$ is the smallest non-empty closed $G$-invariant subset of $\partial X$.
\end{proposition}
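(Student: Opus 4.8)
The plan is to reprove this standard fact — it is Corollary 7.4.3(ii) of \cite{das2017geometry} — by the classical ping-pong method, organised into an easy structural part and one genuinely substantive lemma. The structural part verifies that $\Lambda(G)$ is itself a non-empty closed $G$-invariant subset of $\partial X$. It is $G$-invariant, since $g_n o\to\xi$ forces $hg_n o\to h\xi$ for every $h\in G$, and $hg_n\in G$. It is closed: if $\xi_k\to\xi$ with $\xi_k\in\Lambda(G)$, pick $h_k\in G$ with $\langle h_k o,\xi_k\rangle_o\geq k$; then by (\ref{eq2.2}) together with $\langle\xi_k,\xi\rangle_o\to\infty$ the sequence $(h_k o)$ is Cauchy--Gromov with $\langle h_k o,\xi\rangle_o\to\infty$, i.e.\ $h_k o\to\xi$, so $\xi\in\Lambda(G)$. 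And it is non-empty: a general type action contains a loxodromic $g$, and its fixed points $g^{\pm}$ lie in $\Lambda(G)$ because $g^{\pm n}o\to g^{\pm}$.

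The substantive step is the claim that $\Lambda(G)\subseteq\overline{G\eta}$ for every $\eta\in\partial X$. Granting it, the proposition follows at once: given a non-empty closed $G$-invariant $F\subseteq\partial X$, choose $\eta\in F$; then $G\eta\subseteq F$, hence $\overline{G\eta}\subseteq F$, hence $\Lambda(G)\subseteq F$. To prove the claim I would use two standard facts about general type isometric actions on a (possibly non-proper) Gromov hyperbolic space: (a) every loxodromic $g\in G$ acts on $\partial X$ with North--South dynamics, so $g^n\zeta\to g^{+}$ whenever $\zeta\neq g^{-}$; and (b) the loxodromic fixed points are dense in $\Lambda(G)$. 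From (a): if $\eta\neq g^{-}$ then $g^n\eta\to g^{+}$, so $g^{+}\in\overline{G\eta}$; and if $\eta=g^{-}$, then since the action is of general type it has no fixed point on $\partial X$, so there is $h\in G$ with $hg^{-}\neq g^{-}$, whence $g^n h\eta\to g^{+}$ and again $g^{+}\in\overline{G\eta}$. Thus $\overline{G\eta}$ contains every loxodromic fixed point; being closed, it contains their closure, which by (b) is all of $\Lambda(G)$.

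The main obstacle is fact (b) — essentially the statement that $G$ acts minimally on $\Lambda(G)$ — and it is exactly here that the general type hypothesis is used (for parabolic, focal, or lineal actions it fails, and then no smallest invariant set need exist). Its proof is the usual ping-pong: starting from $\xi\in\Lambda(G)$ with $g_n o\to\xi$ and the two independent loxodromics supplied by the hypothesis, one manufactures, for $n$ large, loxodromic elements of $G$ whose attracting fixed points are confined to arbitrarily small shadows (\ref{eq_shadow}) around $\xi$. Non-properness of $X$ is not an issue, since only pointwise convergence on $\partial X$ and the shadow description of its topology are ever invoked. Since this is precisely the content of the cited corollary, in the paper I would simply cite \cite[\S 7.4]{das2017geometry} for (a) and (b) rather than reproduce the ping-pong in detail.
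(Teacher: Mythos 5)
This statement is not proved in the paper at all: it is imported verbatim as Corollary 7.4.3(ii) of \cite{das2017geometry}, so there is no internal argument to compare against. Your outline is a correct rendition of the standard proof and its logical skeleton is sound: $\Lambda(G)$ is non-empty, closed and $G$-invariant (your Gromov-product argument for closedness, using (\ref{eq2.2}) and the fact that $\langle\xi_k,\xi\rangle_o\to\infty$, is fine in the non-proper setting, as is equivariance and the loxodromic fixed points giving non-emptiness), and the reduction of minimality to the two inputs (a) North--South dynamics of a loxodromic on $\partial X$ and (b) density of loxodromic fixed points in $\Lambda(G)$ is exactly the classical route; the case split $\eta\neq g^-$ versus $\eta=g^-$, handled by the absence of a global fixed point for general type actions, closes the argument correctly. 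The one caveat is that, as you acknowledge, (a) and (b) are themselves delegated to \S 7.4 of the very book whose Corollary 7.4.3 is the statement being proved, so what you have written is a reduction to slightly more elementary facts from the same source rather than an independent proof; that is perfectly acceptable here (the paper treats the whole proposition as a black box), but if you wanted genuine independence you would need to carry out the ping-pong for (b) in a possibly non-proper space, where only the shadow basis (\ref{eq_shadow}) and pointwise convergence on $\partial X$ are available --- which, as you note, is exactly where the general type hypothesis enters and where parabolic, lineal or focal actions break the statement.
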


Recall that a topological space $X$ on which $G$ acts is said {\it $G$-minimal} if for every point $x\in X$, the closure of the orbit $\overline{Gx}$ is $X$. Such an action will also be called a {\it minimal action}.

\begin{proposition}
Let $S$ be an orientable surface as above. Then the action of $\homeo(S)$ on $\partial \mathcal{C}^\dagger(S)$ is minimal.
\end{proposition}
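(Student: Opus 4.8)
The plan is to combine the previous two results. First I would invoke Lemma~\ref{lemma_lim_set}, which tells us that the limit set $\Lambda\big(\homeo(S)\big)$ on $\partial\mathcal{C}^\dagger(S)$ is the entire boundary. The remaining point is to upgrade this to minimality, and for that the natural tool is Proposition~\ref{prop_limit_set}: if the $\homeo(S)$-action on $\mathcal{C}^\dagger(S)$ is of general type, then $\Lambda\big(\homeo(S)\big)$ is the \emph{smallest} non-empty closed invariant subset of the boundary; since it equals all of $\partial\mathcal{C}^\dagger(S)$, every non-empty closed invariant subset is the whole boundary, which is exactly the statement that the action is minimal (the closure of any orbit is a non-empty closed invariant set, hence all of $\partial\mathcal{C}^\dagger(S)$).

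So the real content I would need to supply is that the action of $\homeo(S)$ on $\mathcal{C}^\dagger(S)$ is of general type, equivalently that it has no fixed point on $\partial\mathcal{C}^\dagger(S)$. The cleanest way is to exhibit two independent hyperbolic isometries. This is already in the literature: Bowden--Hensel--Webb \cite{bowden2022quasi} show that suitable (e.g.\ pseudo-Anosov) mapping classes, or more relevantly homeomorphisms supported appropriately, act as hyperbolic isometries of $\mathcal{C}^\dagger(S)$ with distinct axes. Alternatively, I would argue by contradiction: a fixed point $\xi\in\partial\mathcal{C}^\dagger(S)$ would be a closed $\homeo(S)$-invariant set, but by Lemma~\ref{lemma_lim_set} the orbit closure of \emph{any} boundary point is all of $\partial\mathcal{C}^\dagger(S)$ (once we know minimality) --- this is circular, so I'll instead just cite the existence of two independent hyperbolic elements, which is the honest input. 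A concrete choice: take two pseudo-Anosov mapping classes with different stable laminations; they act loxodromically on $\mathcal{C}^\dagger(S)$ by \cite{bowden2022quasi} and their fixed-point pairs on the boundary are disjoint, so no single boundary point can be fixed by the whole group.

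I expect the main (and essentially only) obstacle to be verifying the general-type hypothesis rigorously, since everything else is a formal consequence of Lemma~\ref{lemma_lim_set} and Proposition~\ref{prop_limit_set}. Concretely the proof would run: (1) cite \cite{bowden2022quasi} for the existence of two independent hyperbolic isometries in $\homeo(S)$ acting on $\mathcal{C}^\dagger(S)$, so the action is of general type; (2) apply Proposition~\ref{prop_limit_set} to conclude $\Lambda\big(\homeo(S)\big)$ is the smallest non-empty closed $\homeo(S)$-invariant subset of $\partial\mathcal{C}^\dagger(S)$; (3) apply Lemma~\ref{lemma_lim_set} to identify this with all of $\partial\mathcal{C}^\dagger(S)$; (4) observe that for any $\xi\in\partial\mathcal{C}^\dagger(S)$, the orbit closure $\overline{\homeo(S)\cdot\xi}$ is a non-empty closed invariant subset, hence equals $\partial\mathcal{C}^\dagger(S)$ by minimality of the smallest such set, which is precisely $G$-minimality. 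Since the two versions of the fine curve graph have homeomorphic, $\homeo(S)$-equivariantly homeomorphic boundaries (Remark~\ref{rem_topological_version}), the statement holds for the topological version as well.
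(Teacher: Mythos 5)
Your proposal is correct and follows essentially the same route as the paper: combine Lemma~\ref{lemma_lim_set} with Proposition~\ref{prop_limit_set}, with the only real content being that the action is of general type, verified by exhibiting two independent hyperbolic isometries coming from independent pseudo-Anosov maps. The paper obtains these via a fixed hyperbolic metric and the isometrically embedded copy of the classical curve graph (citing Masur--Minsky) rather than citing \cite{bowden2022quasi} directly, but this is an inessential difference.
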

\begin{proof}
In view of Proposition \ref{prop_limit_set} and Lemma \ref{lemma_lim_set}, it suffices to show that the action of $\homeo(S)$ on $\mathcal{C}^\dagger(S)$ is of general type. To prove so, it suffices to take two independent pseudo-Anosov homeomorphisms on $S$, which will yield two independent hyperbolic isometries on $\mathcal{C}^\dagger(S)$ as their action of a subgraph of $\mathcal{C}^\dagger(S)$ isometric to $\mathcal{C}(S)$ is of general type (see \cite{masur1999geometry}).
\end{proof}

\begin{remark}
The action of $\mathrm{Homeo}(S)$ on $\partial \mathcal{C}^\dagger(S)$ is not transitive since the action cannot send the points on $\partial \mathcal{C}^\dagger(S)$ that are represented by ending laminations to those who are not.
\end{remark}

To conclude this section, we wish to point out in the interests of curiosity that $\homeo(S)$ is a Polish group, {\it i.e.} a completely metrizable and separable topological group, and its continuous minimal actions on compacta are of special interests. For example, \cite{gutman2021universal} shows that its {\it universal minimal flow}, a compact space with a minimal $\homeo(S)$-action that admits any compact minimal space for $\homeo(S)$-actions as a factor, is not metrizable.

\printbibliography

@incollection{gromov1987hyperbolic,
  title={Hyperbolic groups},
  author={Gromov, Mikhael},
  booktitle={Essays in group theory},
  pages={75--263},
  year={1987},
  address={New York, NY},
  publisher={Springer}
}

@book{bridson2013metric,
  title={Metric Spaces of Non-positive Curvature},
  author={Bridson, Martin R and Haefliger, Andr{\'e}},
  edition={1},
  year={2013},
  address={New York, NY},
  publisher={Springer}
}

@book{das2017geometry,
  title={Geometry and Dynamics in Gromov Hyperbolic Metric Spaces: with an Emphasis on Non-proper settings},
  author={Das, Tushar and Simmons, David and Urba{\'n}ski, Mariusz},
  volume={218},
  edition={1},
  year={2017},
  address={Providence, RI},
  publisher={American Mathematical Society}
}

@article{vasala2005gromov,
  title={Gromov hyperbolic spaces},
  author={V{\"a}is{\"a}l{\"a}, Jussi},
  journal={Expositiones Mathematicae},
  volume={23},
  number={3},
  pages={187--231},
  year={2005},
  publisher={Elsvier}
}

@article{minsky2010classification,
  title={The classification of Kleinian surface groups, I: Models and bounds},
  author={Minsky, Yair},
  journal={Annals of Mathematics},
  pages={1--107},
  year={2010},
  publisher={JSTOR}
}

@article{maher2018random,
  title={Random walks on weakly hyperbolic groups},
  author={Maher, Joseph and Tiozzo, Giulio},
  journal={Journal f{\"u}r die reine und angewandte Mathematik},
  volume={2018},
  number={742},
  pages={187--239},
  year={2018},
  publisher={De Gruyter}
}

@article{mackey2020quasi,
  title={Quasi-hyperbolic planes in relatively hyperbolic groups},
  author={Mackay, John M and Sisto, Alessandro},
  journal={Annales Academiæ Scientiarum Fennicæ
Mathematica},
  volume={45},
  pages={139--174},
  year={2020},
  publisher={Academia Scientiarum Fennica}
}

@article{wright2023spheres,
  title={Spheres in the curve graph and linear connectivity of the Gromov boundary},
  author={Wright, Alex},
  journal={Communications of the American Mathematical Society},
  volume={4},
  number={12},
  pages={548--577},
  year={2024}
}

@article{bowden2022quasi,
  title={Quasi-morphisms on surface diffeomorphism groups},
  author={Bowden, Jonathan and Hensel, Sebastian and Webb, Richard},
  journal={Journal of the American Mathematical Society},
  volume={35},
  number={1},
  pages={211--231},
  year={2022}
}

@misc{long2021automorphisms,
      title={Automorphisms of the fine curve graph}, 
      author={Adele Long and Dan Margalit and Anna Pham and Yvon Verberne and Claudia Yao},
      year={2021},
      eprint={2108.04872},
      archivePrefix={arXiv},
      primaryClass={math.GT}
}

@book{farb2011primer,
  title={A primer on mapping class groups},
  author={Farb, Benson and Margalit, Dan},
  year={2011},
  address={Princeton, NJ},
  publisher={Princeton University Press}
}

@book{fathi1991travaux,
  title={Travaux de Thurston sur les surfaces -- S{\'e}minaire Orsay},
  author={Fathi, Albert and Laudenbach, Fran{\c{c}}ois and Po{\'e}naru, Valentin},
  series={Ast{\'e}risque},
  volume={66--67},
  year={1991},
  edition={2nd},
  address={Paris},
  publisher={Soci{\'e}t{\'e} math{\'e}matique de France}
}

@article{birman2015curve,
  title={The curve complex has dead ends},
  author={Birman, Joan S and Menasco, William W},
  journal={Geometriae Dedicata},
  volume={177},
  pages={71--74},
  year={2015},
  publisher={Springer}
}

@article{bowden2022rotation,
  title={Rotation sets and actions on curves},
  author={Bowden, Jonathan and Hensel, Sebastian and Mann, Kathryn and Militon, Emmanuel and Webb, Richard},
  journal={Advances in Mathematics},
  volume={408},
  pages={108579},
  year={2022},
  publisher={Elsevier}
}

@misc{guiheneuf2023parabolic,
      title={Parabolic isometries of the fine curve graph of the torus}, 
      author={Pierre-Antoine Guihéneuf and Emmanuel Militon},
      year={2023},
      eprint={2302.08184},
      archivePrefix={arXiv},
      primaryClass={math.DS}
}

@misc{guiheneuf2023hyperbolic,
      title={Hyperbolic isometries of the ne curve graph of higher genus surfaces}, 
      author={Pierre-Antoine Guihéneuf and Emmanuel Militon},
      year={2023},
      eprint={2311.01087},
      archivePrefix={arXiv},
      primaryClass={math.DS}
}

@article{kent2008shadows,
  title={Shadows of mapping class groups: capturing convex cocompactness},
  author={Kent, Autumn E and Leininger, Christopher J},
  journal={Geometric and Functional Analysis},
  volume={18},
  number={4},
  pages={1270--1325},
  year={2008},
  publisher={Springer}
}

@article{gabai2009almost,
  title={Almost filling laminations and the connectivity of ending lamination space},
  author={Gabai, David},
  journal={Geometry \& Topology},
  volume={13},
  number={2},
  pages={1017--1041},
  year={2009},
  publisher={Mathematical Sciences Publishers}
}

@article{leininger2009connectivity,
  title={Connectivity of the space of ending laminations},
  author={Leininger, Christopher J and Schleimer, Saul},
  journal={Duke Mathematical Journal},
  volume={150},
  number={3},
  pages={533--575},
  year={2009},
  publisher={Duke University Press}
}

@article{leininger2011universal,
  title={The universal Cannon--Thurston map and the boundary of the curve complex},
  author={Leininger, Christopher J and Mj, Mahan and Schleimer, Saul},
  journal={Commentarii Mathematici Helvetici},
  volume={86},
  number={4},
  pages={769--816},
  year={2011}
}

@article{gutman2021universal,
  title={Universal minimal flows of homeomorphism groups of high-dimensional manifolds are not metrizable},
  author={Gutman, Yonatan and Tsankov, Todor and Zucker, Andy},
  journal={Mathematische Annalen},
  volume={379},
  pages={1605--1622},
  year={2021},
  publisher={Springer}
}

@article{klarreich2022boundary,
  title={The boundary at infinity of the curve complex and the relative Teichm{\"u}ller space.},
  author={Klarreich, Erica},
  journal={Groups, Geometry \& Dynamics},
  volume={16},
  number={2},
  year={2022}
}

@article{hasegawa2022gromov,
  title={Gromov boundaries of non-proper hyperbolic geodesic Spaces},
  author={Hasegawa, Yo},
  journal={Tokyo Journal of Mathematics},
  volume={45},
  number={2},
  pages={319--331},
  year={2022},
  publisher={Publication Committee for the Tokyo Journal of Mathematics}
}

@article{caprace2015amenable,
  title={Amenable hyperbolic groups},
  author={Caprace, Pierre-Emmanuel and De Cornulier, Yves and Monod, Nicolas and Tessera, Romain},
  journal={Journal of the european Mathematical Society},
  volume={17},
  number={11},
  pages={2903--2947},
  year={2015}
}

@article{masur1999geometry,
  title={Geometry of the complex of curves I: Hyperbolicity},
  author={Masur, Howard A and Minsky, Yair N},
  journal={Inventiones Mathematicae},
  volume={138},
  number={1},
  pages={103--149},
  year={1999}
}

@article{Bestvina1991TheBO,
  title={The boundary of negatively curved groups},
  author={Mladen Bestvina and Geoffrey Mess},
  journal={Journal of the American Mathematical Society},
  year={1991},
  volume={4},
  pages={469-481}}

@misc{bowden2024boundaryfinecurvegraph,
      title={Towards the boundary of the fine curve graph}, 
      author={Jonathan Bowden and Sebastian Hensel and Richard Webb},
      year={2024},
      eprint={2402.18948},
      archivePrefix={arXiv},
      primaryClass={math.GT}
}

@misc{webb2013shortproofboundedgeodesic,
      title={A short proof of the bounded geodesic image theorem}, 
      author={Richard Webb},
      year={2013},
      eprint={1301.6187},
      archivePrefix={arXiv},
      primaryClass={math.GT}
}

@misc{hensel2013slimunicornsuniformhyperbolicity,
      title={Slim unicorns and uniform hyperbolicity for arc graphs and curve graphs}, 
      author={Sebastian Hensel and Piotr Przytycki and Richard Webb},
      year={2013},
      eprint={1301.5577},
      archivePrefix={arXiv},
      primaryClass={math.GT}
}

@article{rasmussen2020uniform,
  title={Uniform hyperbolicity of the graphs of nonseparating curves via bicorn curves},
  author={Rasmussen, Alexander},
  journal={Proceedings of the American Mathematical Society},
  volume={148},
  number={6},
  pages={2345--2357},
  year={2020}
}

\noindent{\sc Yusen Long}\\
\noindent{\sc Université Paris-Saclay, Institut Mathématique d'Orsay, 91405 Orsay, France}\\
\noindent{\sc Université Paris-Est Créteil, CNRS, LAMA UMR8050, F-94010 Creteil, France}

\noindent{\it Email address:} {\tt \href{mailto:yusen.long@universite-paris-saclay.fr}{yusen.long@universite-paris-saclay.fr}} / {\tt \href{mailto:yusen.long@u-pec.fr}{yusen.long@u-pec.fr}}

\vspace{1em}
\noindent{\sc Dong Tan}\\
\noindent{\sc Guangxi University, Guangxi Center for Mathematical Research, 530004 Nanning, China}

\noindent{\it Email address:} {\tt \href{mailto:duzuizhe2013@foxmail.com}{duzuizhe2013@foxmail.com}}
\end{document}